\newcommand{\mylabel}[2]{#2\def\@currentlabel{#2}\label{#1}}
\newcommand{\s}{\mathsf}
\newcommand{\f}{\mathfrak}
\newcommand{\C}{\mathcal}
\newcommand{\ZZ}{\s{ZZ(A)}}
\newcommand{\ZZval}{\s{ZZ}_{\mathrm{val}}(\s A)}
\newcommand{\crowns}{\s{Crowns(A)}}
\newcommand{\crownsval}{\s{Crowns}_{\mathrm{val}}(\s A)}
\newcommand{\pcrownsval}{\s{PCrowns}_{\mathrm{val}}(\s A)}
\newcommand{\crownsspec}{\s{Crowns}_{\mathrm{spec}}(\s A)}
\newcommand{\pcrownsspec}{\s{PCrowns}_{\mathrm{spec}}(\s A)}
\newcommand{\cq}[1]{\overline{\C{Q}}_{#1}}
\newcommand{\cs}{\overline{\C{S}}}
\newcommand{\ct}{\overline{\C{T}}}
\newcommand{\wst}[1]{\C W_{\s{St}}(#1)}
\newcommand{\winvst}[1]{\overline{\C W}_{\s{St}}(#1)}
\newcommand{\wba}[1]{\C W_{\s{Ba}}(#1)}
\newcommand{\winvba}[1]{\overline{\C W}_{\s{Ba}}(#1)}
\newcommand{\cyc}{\s{Cyc}(\Lambda)}
\newcommand{\cycsp}{\mathsf{Cyc}^{\mathrm{Sp}}(\Lambda)}
\newcommand{\INF}[1]{\,^\infty{#1}^\infty}
\newcommand{\marst}{\s{St}_{\bullet}(\Lambda)}
\newcommand{\FacZ}[1]{\mathrm{Fac}_\mathrm{fin}^\mathbb{Z}(#1)}
\newcommand{\ImZ}[1]{\mathrm{Im}_\mathrm{fin}^\mathbb{Z}(#1)}
\begin{document}
\newcounter{thmcount}
\setcounter{thmcount}{0}
\newtheorem{theom}[thmcount]{Theorem}
\renewcommand*{\thetheom}{\Alph{theom}}

\newtheorem{defn}{Definition}[section]
\newtheorem{definitions}[defn]{Definitions}
\newtheorem{lem}[defn]{Lemma}
\newtheorem{construction}[defn]{Construction}
\newtheorem{prop}[defn]{Proposition}
\newtheorem*{prop*}{Proposition}
\newtheorem{thm}[defn]{Theorem}
\newtheorem{cor}[defn]{Corollary}
\newtheorem{claim}{Claim}[defn]
\newtheorem*{claim*}{Claim}
\newtheorem{algo}[defn]{Algorithm}
\theoremstyle{remark}
\newtheorem{remarks}[defn]{Remarks}
\theoremstyle{remark}
\newtheorem{notation}[defn]{Notation}
\theoremstyle{remark}
\newtheorem{exmp}[defn]{Example}
\AtEndEnvironment{exmp}{\hfill$\Diamond$}
\theoremstyle{remark}
\newtheorem{examples}[defn]{Examples}
\theoremstyle{remark}
\newtheorem{dgram}[defn]{Diagram}
\theoremstyle{remark}
\newtheorem{fact}[defn]{Fact}
\theoremstyle{remark}
\newtheorem{illust}[defn]{Illustration}
\theoremstyle{remark}
\newtheorem{que}[defn]{Question}
\theoremstyle{remark}
\newtheorem{rmk}[defn]{Remark}
\theoremstyle{remark}
\newtheorem{ques}[defn]{Question}
\numberwithin{equation}{section}

\author{Annoy Sengupta and Amit Kuber}
\address{Department of Mathematics and Statistics\\Indian Institute of Technology, Kanpur\\ Uttar Pradesh, India}
\email{annoysgp20@iitk.ac.in, askuber@iitk.ac.in}

\title[Characterisation of band bricks]{Characterisation of band bricks over certain string algebras and a variant of perfectly clustering words}
\keywords{string algebra, band modules, bricks, perfectly clustering words}
\subjclass[2020]{16G20, 68R15}
\date{\today}

\maketitle

\begin{abstract}
Generalising a recent work of Dequ\^ene et al. on the connection between perfectly clustering words and band bricks over a particular family of gentle algebras, we characterise band bricks over string algebras whose underlying quiver is acyclic in terms of \emph{weakly perfectly clustering pairs of words}---a variant of perfectly clustering words. As a consequence, we characterise band semibricks over all such algebras. Furthermore, the combination of our result and a result of Mousavand and Paquette provides an algorithm to determine whether such a string algebra is brick-infinite.
\end{abstract}

\section{Introduction}
Throughout the paper we will assume that $\C K$ is an algebraically closed field. Drozd \cite{drozd1977matrix} and Crawley-Boevey \cite{crawley1988tame} classified the class of finite-dimensional associative algebras over $\C K$ into finite, tame and wild representation types based on the complexity of the problem of classification of $\Lambda$-$\mathrm{mod}$---the category of its finitely generated(=finite-dimensional) (left) representations(=modules). Recall that a finite representation type algebra admits finitely many (isomorphism classes of) indecomposable modules, a tame representation type algebra admits infinitely many (isomorphism classes of) indecomposable modules but, for a given total dimension $d$, all but finitely many modules occur in at most finitely many one-parameter families, whereas the category $\Lambda$-$\mathrm{mod}$ for a wild representation type algebra $\Lambda$ contains the category of finite-dimensional modules over any finite-dimensional algebra. However, since it is hard to classify $\Lambda$-$\mathrm{mod}$, it is generally difficult to answer whether $\Lambda$ is tame or wild. Nevertheless, there are some tame or wild algebras for which one can get a good description of its bricks, where a brick is a module whose endomorphism algebra is isomorphic to the ground field $\C K$, and hence indecomposable. More precisely, there are some tame or wild algebras that are brick-finite or brick-tame, where the latter two classes have obvious definitions. For example, a representation-infinite local algebra is tame but admits a unique brick. See \cite{bodnarchuk2010one} for an example of a wild algebra that is brick-tame. Since brick-finite algebras are a generalisation/relaxation of representation-finite algebras, the problem of characterising whether a finite-dimensional algebra is brick-finite or not is of particular interest along with the classification of all such bricks.

A recent work by Dequ\^ene et al. \cite{dequêne2023generalization} gave a surprising as well as interesting connection between the two seemingly different fields, viz. combinatorics on words and the study of bricks over a particular family $\{\Lambda_N\mid N\geq2\}$ (see Figure \ref{fig:lambda_n1}) of gentle algebras.
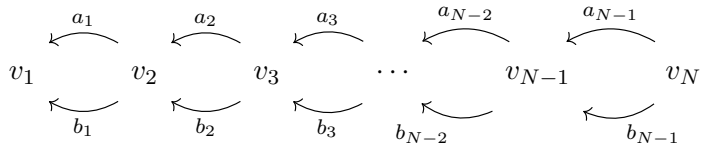
\begin{figure}[h]
    \centering
    \[\begin{tikzcd}
v_1 & v_2 \arrow[l, "b_1", bend left, shift left=2] \arrow[l, "a_1"', bend right, shift right=2] & v_3 \arrow[l, "b_2", bend left, shift left=2] \arrow[l, "a_2"', bend right, shift right=2] & \cdots \arrow[l, "a_3"', bend right, shift right=2] \arrow[l, "b_3", bend left, shift left=2] & v_{N-1} \arrow[l, "a_{N-2}"', bend right, shift right=2] \arrow[l, "b_{N-2}", bend left, shift left=2] & v_N \arrow[l, "b_{N-1}", bend left, shift left=2] \arrow[l, "a_{N-1}"', bend right, shift right=2]
\end{tikzcd}\]
    \caption{$\Lambda_N$ with $\rho=\{a_1b_2,a_2b_3,\cdots,a_{N-2}b_{N-1},b_1a_2,b_2a_3,\cdots,b_{N-2}a_{N-1}\}$}
    \label{fig:lambda_n1}
\end{figure}
In the field of combinatorics of words, Christoffel words are a well-studied family of words on a two-letter alphabet. There are many definitions and equivalent characterizations of those words, but one may refer to \cite{berstel2009combinatorics} for a geometric definition that involves certain paths in the integer lattice, and another definition that involves reading edge-labellings of the Cayley graph of a certain group. The geometric definition splits Christoffel words into two types---upper Christoffel words and lower Christoffel words. The lower Christoffel words were generalised in \cite{simpson2008words} to alphabets of any size; such words are called \emph{perfectly clustering words}. For a fixed $N\geq2$, the authors of \cite{dequêne2023generalization} associated to each word $\s w$ over the alphabet $\s A:=\{2,\cdots, N\}$ a band $\varphi(\s w)$ for $\Lambda_N$, and gave a necessary and sufficient condition for a primitive(=not a $k$-fold concatenation of a word for any $k\geq2$) word $\s w$ to be perfectly clustering.

\begin{restatable}{theom}{theirmainthm}\label{thm: their main thm}
\cite[Theorem~5.12]{dequêne2023generalization} Let $N\geq 2$ and $\s A:=\{2,\cdots,N\}$ be an alphabet linearly ordered with the usual ordering. Then a primitive $\s A$-word $\s w$ is perfectly clustering if and only if the band module $B(\varphi(\s w),1,\lambda)$ for some (equivalently, any) $\lambda\in\C K^\times$ for $\Lambda_N$ is a brick.
\end{restatable}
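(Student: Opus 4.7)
The plan is to analyse the endomorphism algebra $\mathrm{End}(B(\varphi(\s w),1,\lambda))$ combinatorially via the theory of graph maps between string and band modules. For a band $b$ over a string algebra, the endomorphism space of $B(b,1,\lambda)$ admits a basis indexed by the identity map together with certain pairs of factorisations of the cyclic word associated to $b$ (the so-called \emph{one- and two-sided graph maps} in the Krause/Crawley-Boevey picture). Since $B(\varphi(\s w),1,\lambda)$ is automatically indecomposable, it is a brick if and only if every such non-identity graph map is a scalar multiple of the identity, and this can be reformulated purely in terms of non-trivial factorisation pairs of the cyclic word $\varphi(\s w)$. As a standard consequence of this description, the brick property is independent of $\lambda\in\C K^{\times}$, which is why the statement allows for ``some (equivalently, any)''.

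My first step is to translate this combinatorial vanishing condition into a condition on the word $\s w$ itself. The construction $\varphi$ reads each letter $i\in\s A$ as a specific local walk around vertex $v_i$ determined by the gentle relations $\rho$; concretely, each occurrence of the letter $i$ in $\s w$ contributes a fixed ``direct-then-inverse'' pattern using the arrows $a_{i-1},b_{i-1}$ and their reverses. This makes cuts in the cyclic band $\varphi(\s w)$ correspond bijectively to positions in the cyclic word $\s w$, and the gentle relations force any non-trivial graph map to begin and end at matching letter-positions.

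Next, I would recall from \cite{simpson2008words} that a primitive $\s A$-word $\s w$ is perfectly clustering if and only if, when the $|\s w|$ cyclic rotations of $\s w$ are sorted lexicographically with respect to the usual ordering on $\s A$, the sequence of terminal letters is non-decreasing; equivalently, for any two occurrences of the same letter in $\s w$, the cyclic suffixes starting immediately after them are lexicographically ordered in a prescribed way. The central combinatorial content of the proof is the claim that the non-trivial factorisation pairs of $\varphi(\s w)$ witnessing a non-scalar endomorphism are in natural bijection with the pairs of letter-occurrences in $\s w$ that violate this lexicographic condition. Proving this bijection requires interpreting the ``direct'' and ``inverse'' sub-walks of a graph map, dictated by the bounds from $\rho$, as the ``$<$'' and ``$>$'' outcomes of a lexicographic comparison of the two relevant rotations.

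The main obstacle I anticipate is the bookkeeping that matches the graph-map basis to the lexicographic comparison, and in particular confirming that it captures these comparisons \emph{exactly}. Two subtleties require care: (i) excluding redundant graph maps that lie in the scalar line and therefore do not furnish independent basis elements, and (ii) dealing with the cyclic symmetry -- a band represents the same module under rotation, so the proposed bijection must descend modulo cyclic shift on both sides, and one has to verify that the unique fixed point of this symmetry is the identity endomorphism. Once these matters are handled, a non-trivial graph map exists precisely when some pair of cyclic rotations of $\s w$ violates the clustering order, and the theorem follows.
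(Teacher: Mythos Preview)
This theorem is not proved in the present paper; it is quoted verbatim from \cite{dequêne2023generalization}, whose argument proceeds via geometric models for gentle algebras (arcs on marked surfaces and their intersections), not via the combinatorial graph-map machinery you outline. The introduction of this paper explicitly contrasts the two approaches. What the present paper does instead is establish the more general Theorem~\ref{thm: main} by the graph-map route and then recover Theorem~\ref{thm: their main thm} as a special case via Theorem~\ref{thm: brick iff WPCC} and Proposition~\ref{prop: new association consistent}. So your strategy is closer in spirit to \emph{this} paper's methods than to the proof you are nominally trying to reproduce.

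There is, however, a concrete error in your outline that would derail the argument. The map $\varphi$ does not send the letter $n$ to a local walk using only $a_{n-1},b_{n-1}$; it sends $n$ to the full string $\f b_n=a_1\cdots a_{n-1}b_{n-1}^{-1}\cdots b_1^{-1}$, which runs all the way down to vertex $v_1$ and back. Hence positions in $\s w$ do \emph{not} biject with syllable-cuts in $\varphi(\s w)$, and a factor/image substring witnessing a non-trivial endomorphism can begin or end strictly inside one of the blocks $\f b_n$. This is precisely why the paper works with the reformulation in Proposition~\ref{prop: eq criteria pcw} (cyclic permutations $n'\s zm'\s z'$ and $n''\s zm''\s z''$ with $n'<n''$, $m'<m''$) rather than with your ``lexicographic comparison of cyclic suffixes after occurrences of the same letter'', and why the core of the argument (Lemma~\ref{lem: nontrivial morphism between bands criterion in terms of WPCC}) requires a careful case analysis on the sign $\delta(\f u)$ of the witnessing substring. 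Your proposal does not yet contain the idea needed to handle substrings that do not respect the block decomposition.
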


Although our work is based on \cite{dequêne2023generalization}, the motivation for our work differs from theirs. We generalise the above theorem to give a characterisation of band-bricks for string algebras whose associated (Gabriel) quiver is acyclic i.e. there is no directed cycle (Theorem \ref{thm: main}). As applications of our result, we also characterise band semibricks (Corollary \ref{cor: semi-brick criterion}), which are finite direct sums of band-bricks where none of the distinct pair admits a non-zero morphism between them, and using a result of Mousavand and Paquette, we give an algorithm (Corollary \ref{cor: algorithm brick inf}) to determine whether or not a string algebra whose underlying quiver is acyclic is brick-infinite.

The Burrows-Wheeler transformation (Definition \ref{defn: pcw}) of a word is the sequence of the last letters of the cyclic permutations of the word arranged in the lexicographic order. A particular map $\Phi$, defined by Gessel and Reutenauer \cite{gessel1993counting}, from the set of words to the multiset of conjugacy classes of primitive words, gives an inverse map of the Burrows-Wheeler transformation for perfectly clustering words. Lapointe \cite{lapointe2020combinatoire} gave a conjecture about an upper bound of distinct lengths of perfectly clustering words appearing in $\Phi(n^{\alpha_n}\cdots2^{\alpha_2})$ \cite[Conjecture~1.2]{dequêne2023generalization}). The authors in \cite{dequêne2023generalization} proved a stronger version of this conjecture \cite[Corollary~1.3]{dequêne2023generalization} using Theorem \ref{thm: their main thm}. On the other hand, our motivation for extending their result was to determine (in finite time) whether a given band module is a brick. Our work also differs from theirs in the methods used. They used geometric models to study gentle algebras and their representations (developed in \cite{assem2010gentle}, \cite{opper2018geometric}, \cite{palu2019non}, \cite{baur2021geometric}), whereas we use combinatorial descriptions of representations and morphisms between them (developed in \cite{gel1968indecomposable}, \cite{butler1987auslander}, \cite{crawley1989maps},\cite{Krause1991MapsBT}).

In this paper, the generalisation of Theorem \ref{thm: their main thm} is done in two steps. Firstly, we note that the bands considered had a specific configuration. The path that those bands traversed involved returning to vertex $1$ after reaching any of the remaining vertices. Therefore,  for any band that did not have this configuration, it is not possible to determine whether the corresponding band module is a brick. In order to encompass all bands, we associate each string with a word formed by the vertices where the string changes its nature from direct to inverse or vice-versa. The new words formed are \emph{zigzags} (Definition \ref{defn: zigzagcrown}). For bands, the corresponding ``cyclic'' words are called \emph{crowns} (Definition \ref{defn: zigzagcrown}), which are zigzags whose every cyclic permutation is also a zigzag. This association is consistent with the old one as the new words corresponding to the old bands are perfectly clustering if and only if the old words are so (Proposition \ref{prop: new association consistent}). A slight modification of perfectly clustering words yields \emph{weakly perfectly clustering words} (Definition \ref{defn: weak perf clus}), which is used in the characterisation of all band bricks over $\Lambda_N$.

\begin{restatable}{theom}{brickiffwpcc}\label{thm: brick iff WPCC}
Let $\widetilde{\s A}:=\{1,2,\cdots,N\}$ with the usual ordering be a linearly ordered alphabet. A primitive $\widetilde{\s A}$-crown $\s w$ is weakly perfectly clustering if and only if the band module $B(\widetilde{\varphi}(\s w),1,\lambda)$ for some (equivalently, any) $\lambda\in\C K^\times$ for $\Lambda_N$ is a brick.
\end{restatable}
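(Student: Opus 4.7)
The plan is to analyse the endomorphism algebra of the band module $B := B(\widetilde{\varphi}(\s w),1,\lambda)$ using the Krause--Crawley-Boevey combinatorial description of morphisms between string and band modules. For a band module with $n=1$, being a brick is equivalent to the absence of any non-scalar endomorphism, and every such endomorphism decomposes into ``graph maps'' (or quasi-graph maps) arising from pairs of positions in the underlying band defining a common substring with compatible hook/cohook data at both endpoints. Hence I would first reduce the brick question to the absence of non-trivial quasi-graph endomorphisms of $B$.

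In the next step, I would translate these potential quasi-graph endomorphisms into combinatorial data on the crown $\s w$. The map $\widetilde{\varphi}$ sends a crown to the band by reading, between consecutive letters of the crown, an alternating sequence of direct and inverse subpaths of $\Lambda_N$ dictated by the two arrow families $\{a_i\}$ and $\{b_i\}$. Under this correspondence, each non-trivial quasi-graph endomorphism amounts to a non-trivial cyclic rotation $\s w'$ of $\s w$ such that the lexicographic comparison of $\s w'$ with $\s w$ realises a strict inequality at a position matching a valid hook/cohook endpoint, the validity being governed by the relations $\rho$. I would then prove both directions of the biconditional: if $\s w$ is weakly perfectly clustering, then no such configuration can occur and $B$ is a brick; and conversely, if the weakly perfectly clustering condition fails, I would construct an explicit non-scalar endomorphism of $B$ from the failing rotation by reading off the common substring and verifying the endpoint compatibility.

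The main obstacle lies in correctly aligning the ``weakly'' clause of the definition with the boundary behaviour of the hook/cohook data. Classical perfect clustering (in the sense of Simpson) forbids all strict lexicographic descents between non-trivial rotations and the original word, but in the setting of $\Lambda_N$ some rotations do admit such descents at positions that cannot correspond to endpoints of any graph map---precisely because the hook or cohook required there would violate a relation in $\rho$. The relaxation from perfect to weak perfect clustering must carve out exactly these exceptional positions; verifying this compatibility in both directions, through a careful case analysis of the two arrow families $\{a_i\}, \{b_i\}$ and the forbidden compositions in $\rho$, will be the technical heart of the argument. Along the way, I would need Proposition \ref{prop: new association consistent} to ensure that the crown-based viewpoint is consistent with the Dequ\^ene et al.\ setting, so that Theorem \ref{thm: their main thm} can be recovered as the special case where $\s w$ avoids the letter $1$.
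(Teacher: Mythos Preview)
Your approach differs substantially from the paper's. The paper does \emph{not} prove Theorem~\ref{thm: brick iff WPCC} by a direct graph-map analysis for $\Lambda_N$; instead it first establishes the general Theorem~\ref{thm: main} via the traced-poset machinery and the key technical Lemma~\ref{lem: nontrivial morphism between bands criterion in terms of WPCC}, and then specialises. The specialisation is almost immediate: the traced poset associated with $\Lambda_N$ splits into two disjoint linear orders, and $\wba{\f b}$ and $\wba{\f b^{-1}}$ live in different components, so the pair $(\wba{\f b},\wba{\f b^{-1}})$ is vacuously weakly perfectly clustering; by Remark~\ref{rmk: w is WPC iff ww is WPC} the remaining condition on $(\wba{\f b},\wba{\f b})$ is exactly that $\s w$ be weakly perfectly clustering. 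Your direct route is not unreasonable, but it re-does the work of Lemma~\ref{lem: nontrivial morphism between bands criterion in terms of WPCC} in the special case, whereas the paper gets the result essentially for free once the general lemma is in place.

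There is also a conceptual misdiagnosis in your outline that would cause trouble if you tried to carry it out. You attribute the ``weakly'' relaxation to positions where the required hook or cohook would violate a relation in $\rho$. That is not the mechanism. In $\Lambda_N$ the monomial relations never obstruct a single hook or cohook at a peak or valley; what governs the $|\s z|=0$ case is whether the two band-level paths between adjacent crown letters share a common segment, i.e.\ whether the intervals $[n',m']$ and $[n'',m'']$ intersect (together with the same-direction condition $n'<m'\Leftrightarrow n''<m''$). This is precisely Condition~(\ref{axiom2 of WPCC}) of Definition~\ref{defn: weak perf clus}, and in the proof of Lemma~\ref{lem: nontrivial morphism between bands criterion in terms of WPCC} it appears as the Subcases $\delta(\f u)=\pm1$: the relevant common string $\f u$ is a non-trivial direct or inverse path, and it exists exactly when those intervals overlap. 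If you pursue the direct approach, this is the case analysis you actually need. You also omit the check against $\INF{(\f b^{-1})}$ required by Corollary~\ref{cor: brick criteria}; for $\Lambda_N$ it is vacuous (the syllable alphabets of $\f b$ and $\f b^{-1}$ are disjoint, equivalently the two traced-poset components are disjoint), but this must be stated and justified.
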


Using the rule that if there is a direct arrow from $v_i$ to $v_j$ then $v_i>v_j$, we note that the set of vertices will form a partially ordered set instead of a linearly ordered set for string algebras whose underlying quiver is acyclic. The definition of a weakly perfectly clustering crown over a linearly ordered alphabet involves the comparison of certain intervals of the linearly ordered set. However, an interval of a partially ordered set is not necessarily a linear order. It is realised (see Example \ref{exmp: motivation of traced poset}) that it is not actually the intervals that we compare, but it is the appropriate path that the band traverses. This motivated us to ``trace" appropriate paths or linear orders in the partially ordered set based on whether we can reach from one point to another using a direct string or an inverse string. Thus we define a \emph{finite traced partially ordered set} (Definition \ref{defn: traced poset}). Next, we notice that it is actually the zero-length strings and not the vertices that we need to note down while constructing a word out of a string because both zero-length strings $1_{(v,\pm1)}$ corresponding to a single vertex $v$ may appear simultaneously in a string for arbitrary string algebras whose underlying quiver is acyclic, contrary to that in $\Lambda_N$ (Example \ref{exmp: motivation of cov quiv}). Therefore, we obtain a \emph{covering quiver} (Definition \ref{defn: cov quiv}) from a string algebra whose vertices are zero-length strings and arrows are direct or inverse syllables. The covering quiver allows us to get a sense of the direction of the strings. Also, this step is an intermediate step for obtaining a finite traced partially ordered set from such a string algebra. Now the vertices of the traced partially ordered set are the vertices of the covering quiver(=zero-length strings) and the ordering is defined usually, i.e. if there is a direct (resp. inverse) arrow from $v$ to $v'$ then $v>v'$ (resp. $v<v'$). It is important to observe that the hypothesis that the underlying quiver of the string algebra is acyclic is required to prove that this order is a partial order (Proposition \ref{prop: poset from cov quiv}).

The finite traced partially ordered set forms an alphabet for appropriate words that would be correspondents of strings. However, as opposed to $\Lambda_N$, not all zigzags will correspond to strings; the ones that do are called \emph{valid zigzags} (Definition \ref{defn: valid zigzag}). Similarly, the correspondent $\wba{\f b}$ of a band $\f b$ is called a \emph{primitive special crown} (Definition \ref{defn: spec crown}). Proposition \ref{prop:bijections} documents all such one-to-one correspondences. Once both sides as well as the bridges between them are established, a technical result (Lemma \ref{lem: nontrivial morphism between bands criterion in terms of WPCC}), that forms the heart of the argument of the main result of the paper (Theorem \ref{thm: main}), stated below, gives a criterion for the existence of a non-trivial morphism between certain band modules.

\begin{restatable}{theom}{main}\label{thm: main}
Let $\Lambda$ be a string algebra whose underlying quiver is acyclic. Given a band $\f b$ for $\Lambda$, $l\in\mathbb N^+$ and $\lambda\in\C K^\times$, the band module $B(\f b,l,\lambda)$ is a brick if and only if $l=1$ and the pairs $(\wba{\f b},\wba{\f b})$ and $(\wba{\f b},\wba{\f b^{-1}})$ of crowns are weakly perfectly clustering.
\end{restatable}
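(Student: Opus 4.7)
The plan is to analyse $\mathrm{End}_\Lambda(B(\f b,l,\lambda))$ using the combinatorial description of morphisms between band modules going back to \cite{crawley1989maps} and \cite{Krause1991MapsBT}, and then translate the existence of non-scalar endomorphisms into conditions on weakly perfectly clustering pairs of crowns via Lemma \ref{lem: nontrivial morphism between bands criterion in terms of WPCC}.

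First I would dispose of the case $l\geq 2$. In that regime the module $B(\f b,l,\lambda)$ carries the standard non-scalar nilpotent endomorphism induced by the Jordan block structure on the $l$-dimensional $\C K[x]/(x-\lambda)^l$-component sitting on top of the underlying band; this endomorphism is neither zero nor an isomorphism, so $B(\f b,l,\lambda)$ fails to be a brick. Hence both sides of the desired equivalence force $l=1$, and it remains to treat that case.

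For $l=1$, I would invoke the graph-map classification of $\mathrm{Hom}_\Lambda(B(\f b,1,\lambda),B(\f b,1,\lambda))$. The endomorphisms fall into (i) scalar multiples of the identity, arising from the tautological matching of $\f b$ with itself, and (ii) \emph{non-tautological} graph maps arising from matchings of $\f b$ against either a non-trivial cyclic rotation of $\f b$ or a cyclic rotation of $\f b^{-1}$; the second family is unavoidable because a band module admits two natural presentations coming from the band and its formal inverse. Each non-tautological graph map produces a non-scalar endomorphism, so $B(\f b,1,\lambda)$ is a brick if and only if no such graph map exists.

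Lemma \ref{lem: nontrivial morphism between bands criterion in terms of WPCC} then converts the condition ``no non-tautological graph map'' into ``weakly perfectly clustering crown pair'', applied once with source/target data $(\f b,\f b)$ and once with $(\f b,\f b^{-1})$; combining the two applications yields the stated equivalence. The main obstacle in the plan is the careful bookkeeping that isolates the tautological graph map from all others, so that the identity corresponds on the combinatorial side to the trivial WPCC configuration, while every genuine overlap of syllables between distinct rotations of $\wba{\f b}$, or between $\wba{\f b}$ and $\wba{\f b^{-1}}$, is detected by a failure of weak perfect clustering. The acyclic hypothesis on the underlying quiver, mediated by the traced poset and primitive special crown framework of Proposition \ref{prop:bijections}, is exactly what is needed to keep this dictionary tight and to rule out spurious matchings at the ``cut'' of the band.
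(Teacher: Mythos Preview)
Your proposal is correct and follows essentially the same route as the paper: the paper simply observes that Theorem~\ref{thm: main} is an immediate consequence of Corollary~\ref{cor: brick criteria} (which packages exactly your $l\geq 2$ Jordan-block argument together with the factor/image-substring criterion for $l=1$) combined with a single application of Lemma~\ref{lem: nontrivial morphism between bands criterion in terms of WPCC} to $\f b_1=\f b_2=\f b$. Note that one invocation of the lemma already yields both pairs $(\wba{\f b},\wba{\f b})$ and $(\wba{\f b},\wba{\f b^{-1}})$, so there is no need to apply it separately with target $\f b^{-1}$.
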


The paper is organised as follows. The background and important notations about string algebras are set up in \S~\ref{sec: prel of str alg}. The work of \cite{dequêne2023generalization} is briefly discussed in \S~\ref{sec: band bricks in lamdaN} followed by a generalisation that characterises all the band bricks of $\Lambda_N$. In \S~\ref{sec: fin traced poset}, a finite traced partially ordered set is defined which forms an alphabet for certain words that would be correspondents of strings. The section also contains the definition of a weakly perfectly clustering pair of crown (Definition \ref{defn: weak perf clus2}), a generalisation of weakly perfectly clustering word, which is used in Lemma \ref{lem: nontrivial morphism between bands criterion in terms of WPCC}. The recipe for obtaining a finite traced partially ordered set from a string algebra is contained in \S~\ref{sec:obtain traced poset}. This section also contains the concept of a covering quiver, which is obtained from a string algebra as an intermediate step before obtaining the finite traced poset. Once we obtain a finite traced poset, it is natural to ask whether we can recover the string algebra from it. An affirmative answer to this question is given in \S~\ref{sec:construct string alg}. A reader who is only interested in the main result may skip this short section. Finally, \S~\ref{sec: main} establishes the connection between the two sides---strings and bands over string algebra, and certain words over a finite traced partially ordered set---and contains important results of this paper. Two applications of the main result are mentioned in \S~\ref{sec: appl}. The fact that Theorem \ref{thm: main} is applicable to all gentle algebras is established in \S~\ref{sec: future dir}, along with a future direction.

\subsection*{Acknowledgements}
The authors would like to thank Debajyoti Deb for useful preliminary discussions.

\subsection*{Funding}
The first author thanks the \emph{Council of Scientific and Industrial Research (CSIR)} India - Research Grant No. 09/092(1090)/2021-EMR-I for the financial support.

\section{Preliminaries of String Algebras}\label{sec: prel of str alg}
In this section, we set up notations and outline some background for string algebras relevant to this paper. The content and presentation of this section mostly follows \cite{LakThe16}. We begin by defining some combinatorial objects known as (finite and infinite) strings and bands consisting of words made up of letters arising out of arrows in the quiver. These combinatorial objects are the key to constructing indecomposable modules known as string modules and band modules. These two classes of indecomposable modules exhaust all the finite-dimensional indecomposable modules over a string algebra; a proof can be found in \cite{butler1987auslander}, which is motivated from \cite{gel1968indecomposable}.

\begin{defn}\label{defn:string alg}
A \emph{string algebra} is a monomial algebra $\C KQ/\langle\rho\rangle$, where $Q=(Q_0,Q_1,s,t)$ is a finite quiver satisfying the following properties.
\begin{enumerate}[label=(\Roman*)]
    \item\label{indegoutdeg} For every $v\in Q_0$, there are at most two arrows $\alpha,\beta\in Q_1$ such that $s(\alpha)=s(\beta)=v$, and at most two arrows $\alpha',\beta'\in Q_1$ such that $t(\alpha')=t(\beta')=v$.
    \item\label{at most one arrow} For every $\alpha\in Q_1$, there is at most one arrow $\beta\in Q_1$ such that $\alpha\beta\notin\rho$ and at most one arrow $\beta'$ such that $\beta'\alpha\notin\rho$.
    \item\label{admissible} There exists $M\geq 0$ such that $|p|\leq M$ for every path $p$ having no subpath in $\rho$.
\end{enumerate}
If all the relations in $\rho$ is of length 2 and Condition \ref{at most one arrow} is replaced by the following conditions, then $\C KQ/\langle\rho\rangle$ is said to be a \emph{gentle algebra}.
\begin{enumerate}[label=(II\alph*)]
    \item For every $\alpha\in Q_1$, there is at most one arrow $\beta\in Q_1$ such that $\alpha\beta\notin\rho$ and at most one arrow $\beta'\in Q_1$ such that $\alpha\beta'\in\rho$.
    \item For every $\alpha\in Q_1$, there is at most one arrow $\gamma\in Q_1$ such that $\gamma\alpha\notin\rho$ and at most one arrow $\gamma'\in Q_1$ such that $\gamma'\alpha\in\rho$.
\end{enumerate}
\end{defn}
If $\C KQ/\langle\rho\rangle$ is a string algebra then we also say that $(Q,\rho)$ is a presentation of a string algebra and we refer $Q$ as the underlying quiver of $\Lambda$.

For each string algebra, it is useful to choose and fix maps $\varsigma,\varepsilon:Q_1\to\{-1,1\}$ with the following properties.
\begin{enumerate}[label=(\alph*)]
    \item If $\alpha,\beta\in Q_1$ such that $\alpha\neq\beta$ with $s(\alpha)=s(\beta)$, then $\varsigma(\alpha)=-\varsigma(\beta)$.
    
    \item If $\alpha,\beta\in Q_1$ such that $\alpha\neq\beta$ with $t(\alpha)=t(\beta)$, then $\varepsilon(\alpha)=-\varepsilon(\beta)$.
    
    \item If $\alpha,\beta\in Q_1$ such that $s(\alpha)=t(\beta)$ and $\alpha\beta\notin\rho$, then $\varsigma(\alpha)=-\varepsilon(\beta)$.
\end{enumerate}
For each $\alpha\in Q_1$, let $\alpha^{-1}$ be the formal inverse of $\alpha$ such that ${(\alpha^{-1})}^{-1}=\alpha$. We define $s(\alpha^{-1}):=t(\alpha)$ and $t(\alpha^{-1}):=s(\alpha)$. Define $Q_1^{-1}:=\{\alpha^{-1}:\alpha\in Q_1\}$. The elements of $Q_1\sqcup Q_1^{-1}$ will be called \emph{syllables}.
\begin{defn}\label{defn: string}
A finite sequence of syllables $\alpha_k\cdots\alpha_1$ is called a \emph{(finite) string} if the following hold:
\begin{itemize}
    \item For each $i\in\{1,\cdots,k-1\}$, $t(\alpha_i)=s(\alpha_{i+1})$.

    \item For each $i\in\{1,\cdots,k-1\}$, $\alpha_i\neq\alpha_{i+1}^{-1}$.

    \item For all $1\leq i\leq i+j\leq k$, $\alpha_{i+j}\cdots\alpha_i\notin\rho$ and $\alpha_i^{-1}\cdots\alpha_{i+j}^{-1}\notin\rho$.
\end{itemize}
\end{defn}
For a string $\f x:=\alpha_k\cdots\alpha_1$, we extend the notions of inverse, starting and ending vertex in the obvious ways: $\f x^{-1}:=\alpha_1^{-1}\cdots\alpha_k^{-1}$, $s(\f x^{-1}):=t(\f x)$ and $t(\f x^{-1}):=s(\f x)$. We refer to $\alpha_1$ as the \emph{first syllable} of $\f x$ and $\alpha_k$ as the \emph{last syllable} of $\f x$. For each $v\in Q_0$, there are zero-length strings $1_{(v,1)}$ and $1_{(v,-1)}$, where we have $1_{(v,i)}^{-1}=1_{(v,-i)}$ and $s(1_{(v,i)})=t(1_{(v,i)})=v$ for any $i\in\{-1,1\}$. We denote by $\s{St}(\Lambda)$ the set of all strings in a string algebra $\Lambda$ and by $\s{St}_{>0}(\Lambda)$ the set of all positive-length strings. The $\varsigma$ and $\varepsilon$ maps are extended to $\s{St}(\Lambda)$ in the following way. For $\alpha\in Q_1$, $\varsigma(\alpha^{-1}):=\varepsilon(\alpha)$ and $\varepsilon(\alpha^{-1}):=\varsigma(\alpha)$. If $\f u=\alpha_k\cdots\alpha_1\in\s{St}_{>0}(\Lambda)$ then $\varsigma(\f u):=\varsigma(\alpha_1)$ and $\varepsilon(\f u):=\varepsilon(\alpha_k)$. In addition, we have $\varsigma(1_{(v,i)}):=-i=:-\varepsilon(1_{(v,i)})$ for each $i\in\{-1,1\}$.

For strings $\f x$ and $\f y$, if $\f y\f x$ is defined then $\varsigma(\f y)=-\varepsilon(\f x)$. For a vertex $v\in Q_0$ and $i\in\{-1,1\}$, we say that the concatenation $1_{(v,i)}\f x$ is defined and is equal to $\f x$ if $t(\f x)=v$ and $\varepsilon(\f x)=i$. Similarly, we say that the concatenation $\f x1_{(v,i)}$ is defined and is equal to $\f x$ if $s(\f x)=v$ and $\varsigma(\f x)=-i$.

To describe the nature (inverse, direct or mixed) of a positive-length string, we define a \emph{sign map} $\delta:\s{St}_{>0}(\Lambda)\to\{-1,0,1\}$ as follows. Given a string $\f u=\alpha_k\cdots\alpha_1$ define
$$\delta(\f u):=
\begin{cases}
-1&\text{ if $\alpha_i\in Q_1$ for every $i\in\{1,\cdots k\}$}\\
1&\text{ if $\alpha_i\in Q_1^{-1}$ for every $i\in\{1,\cdots k\}$}\\
0&\text{ otherwise}
\end{cases}
$$

Although a positive-length string is a finite sequence of syllables, zero-length strings sit between the syllables as is described by the following remark.

\begin{rmk}\label{rem:alt string repn}
Given a string $\f u=\f x_k\cdots\f x_1\in\mathrm{St}_{>0}(\Lambda)$, we can write $$\f u=1_{(t(\f x_k),\varepsilon(\f x_k))}\f x_k1_{(s(\f x_k),-\varsigma(\f x_k))}\cdots\f x_21_{(s(\f x_2),-\varsigma(\f x_2))}\f x_11_{(s(\f x_1),-\varsigma(\f x_1))}.$$
In particular, we will use this presentation when $\delta(\f x_i)=-\delta(\f x_{i+1})\neq0$ for $i\in\{1,\cdots,k-1\}$, in which case we say that $\f u=\f x_k\cdots\f x_1$ is the \emph{standard partition} of $\f x$.
\end{rmk}

A standard notion about finite sequences of syllables will be useful later.
\begin{defn}\label{defn: period of a string}
A \emph{period} of a string $\f x_1:=\alpha_k\cdots\alpha_1$ is a positive integer $1\leq p\leq k$ such that $\alpha_{i+p}=\alpha_i$, whenever $1\leq i+p\leq k$.    
\end{defn}

The following proposition, popularly known as the Knuth-Morris-Pratt (KMP) algorithm, will be useful in the proof of Lemma \ref{lem: nontrivial morphism between bands criterion in terms of WPCC}.

\begin{prop}\label{prop: period of a string}
\cite[Lemma~1]{knuth1977fast} Let $p_1$ and $p_2$ be periods of a finite string $\f x$. Then $\mathrm{gcd}(p_1,p_2)$ is a period of $\f x$.
\end{prop}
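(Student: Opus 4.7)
The plan is to prove this by strong induction on $p_1+p_2$, following the classical Fine--Wilf-style argument. Without loss of generality assume $p_1\geq p_2$. The base case $p_1=p_2$ is immediate since $\gcd(p_1,p_2)=p_1$ is by hypothesis a period. For the inductive step, the key reduction is to show that $p_1-p_2$ is itself a period of $\f x$; once this is established, I can apply the induction hypothesis to the pair $(p_2,p_1-p_2)$, whose sum is strictly smaller and whose $\gcd$ equals $\gcd(p_1,p_2)$ by the Euclidean algorithm.

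To prove that $p_1-p_2$ is a period, I would fix $i$ with $1\leq i,\,i+(p_1-p_2)\leq k$ and show $\alpha_{i+(p_1-p_2)}=\alpha_i$ by a case split on the position of $i$ relative to $p_2$:
\begin{itemize}
\item If $i>p_2$, then $j:=i-p_2\geq 1$ and $j+p_1=i+(p_1-p_2)\leq k$, so $p_2$-periodicity gives $\alpha_i=\alpha_j$ and $p_1$-periodicity gives $\alpha_j=\alpha_{j+p_1}=\alpha_{i+(p_1-p_2)}$.
\item If $i\leq p_2$, then $i+p_1-p_2\leq p_1$, so in particular $i+p_1\leq k+p_2$; under the (implicit) hypothesis that $|\f x|$ is large enough (the Fine--Wilf bound $|\f x|\geq p_1+p_2-\gcd(p_1,p_2)$), one has $i+p_1\leq k$, so $p_1$-periodicity gives $\alpha_i=\alpha_{i+p_1}$, and then $p_2$-periodicity applied with $j=i+(p_1-p_2)$ gives $\alpha_{i+p_1}=\alpha_j$, as required.
\end{itemize}

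The main obstacle is the bookkeeping of index ranges in the second case: the argument only works when both the ``shift up by $p_1$'' and ``shift down by $p_2$'' moves keep us inside $\{1,\dots,k\}$, which is exactly what forces the length hypothesis $|\f x|\geq p_1+p_2-\gcd(p_1,p_2)$ in the Fine--Wilf theorem. I expect the citation to \cite{knuth1977fast} to supply precisely this bound (or an equivalent one guaranteeing the chain of shifts can be carried out); the inductive step itself is then essentially the Euclidean algorithm transported to the alphabet setting. Finally, I would note that the conclusion transfers immediately from the pair $(p_1,p_2)$ to any chain $(p_1,p_2,p_1-p_2,\dots,\gcd(p_1,p_2))$ produced by the Euclidean algorithm, so that $\gcd(p_1,p_2)$ arises as a period after finitely many reduction steps.
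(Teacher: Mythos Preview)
The paper supplies no proof of this proposition; it is simply quoted from \cite{knuth1977fast}. Your sketch is the standard Fine--Wilf argument (Euclidean reduction on the pair of periods), and that is indeed how the cited result is established, so there is nothing to compare on the level of method.

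You have, however, correctly put your finger on a genuine issue with the \emph{statement}: as written in the paper it is false without the length hypothesis $|\f x|\geq p_1+p_2-\gcd(p_1,p_2)$. A concrete counterexample under the paper's own Definition~\ref{defn: period of a string} is the length-$3$ string $aba$, which has periods $2$ and $3$ (the latter vacuously) but not period $\gcd(2,3)=1$. Your second bullet breaks down exactly when this bound is violated, and that is a defect of the proposition as restated, not of your argument. The cited source includes the required length hypothesis; the paper has simply dropped it in the restatement. So your proposal is correct for the properly stated result, and your caveat about the ``implicit'' Fine--Wilf bound is warranted rather than optional.
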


\begin{defn}
A string $\f b\in\s{St}_{>0}(\Lambda)$ is called a \emph{band} if all of the following hold:
\begin{itemize}
    \item $\f b$ is cyclic i.e. $s(\f b)=t(\f b)$;
    \item $\f b$ is primitive i.e. $\f b\neq\f u^m$ for any $m\geq1$ and any string $\f u$;
    \item for every $m\geq1$, $\f b^m$ is a string;
    \item the first syllable of $\f b$ is inverse and the last syllable of $\f b$ is direct.
\end{itemize}
\end{defn}
If $\f b=\alpha_k\cdots\alpha_1$ is a band, let $\sigma_i(\f b):=\alpha_{i-1}\cdots\alpha_1\alpha_k\cdots\alpha_i$ for each $i\in\{1,\cdots,k\}$. If $\delta(\alpha_i)\neq\delta(\alpha_{i-1})$ then call $\sigma_i(\f b)$ a \emph{special cyclic permutation} of $\f b$. Denote the set of special cyclic permutations in a string algebra by $\cycsp$. Also denote the set of powers of special cyclic permutations of bands by $\cyc$.

For a string $\f x$, one can associate an indecomposable module known as a \emph{string module} $M(\f x)$ (see \cite[\S~1.9]{SchHam98} or \cite[\S~2.3.1]{LakThe16}). We have $M(\f x)\cong M(\f y)$ if and only if $\f x=\f y^{-1}$. For a band $\f b$, $l\geq1$ and $\lambda\in\C K^\times$, one can associate an indecomposable module known as a \emph{band module} $B(\f b,l,\lambda)$ (see \cite[\S~1.11]{SchHam98} or \cite[\S~2.3.2]{LakThe16}). We have $B(\f b,l,\lambda)\cong B(\f b',l',\lambda')$ if and only if $l=l'$, $\lambda=\lambda'$, and $\f b'$ is a cyclic permutation of either $\f b$ or $\f b^{-1}$.

The string modules and band modules are all finite-dimensional indecomposable modules over a string algebra (see \cite{butler1987auslander}). Hence, they provide a complete description of the objects of the Krull-Schmidt category $\Lambda$-$\mathrm{mod}$. An explicit description of $\mathrm{Hom}_\Lambda(M,N)$ where $M,N$ are finite-dimensional indecomposable modules was given by Crawley-Boevey \cite{crawley1989maps} and Krause \cite{Krause1991MapsBT}. We present here such a description when $M$ and $N$ are band modules. To describe such morphisms, we need to introduce some more combinatorial objects.

Similar to finite strings, \emph{infinite strings} can be defined using an infinite sequence of syllables. A sequence $\cdots\alpha_2\alpha_1$ of syllables is called a \emph{left $\mathbb N$-string} if $\alpha_k\cdots\alpha_1$ is a string for every $k\in\mathbb N$. A sequence of syllables is called a \emph{right $\mathbb N$-string} if $\alpha_{-1}\alpha_{-2}\cdots\alpha_{-k}$ is a string for every $k\in\mathbb N$. A sequence of syllables $\cdots\alpha_1\alpha_0\alpha_{-1}\cdots$ is called a \emph{$\mathbb Z$-string} if $\alpha_i\cdots\alpha_j$ is a string for every pair of integers $i\geq j$.

For (possibly infinite) strings $\f x$ and $\f y$, we say that $\f x$ is a \emph{substring} of $\f y$, denoted $\f x\sqsubseteq\f y$, if $\f y=\f u\f x\f v$ for some (possibly infinite) strings $\f u$ and $\f v$. We say that $\f x$ is a \emph{left substring} of $\f y$, denoted $\f x\sqsubseteq_l\f y$, if $\f y=\f u\f x$ for a (possibly infinite) string $\f u$. Dually say that $\f x$ is a \emph{right substring} of $\f y$, denoted $\f x \sqsubseteq_r \f y$, if $\f y=\f x\f u$ for a (possibly infinite) string $\f u$. We say that $\f x$ is a proper (resp. left, right) substring of $\f y$ if $\f x\sqsubseteq \f y$ (resp. $\f x\sqsubseteq_l \f y$, $\f x\sqsubseteq_r \f y$) but $\f x\neq\f y$.

\begin{defn}\label{defn:factor image}
Given a (possibly infinite) string $\f x$, a (possibly infinite) substring $\f u$ of $\f x$ is said to be a \emph{factor substring} if one of the following holds:
\begin{itemize}
    \item $\f u=\f x$;
    \item $\alpha\f u\sqsubseteq_l\f x$ for some $\alpha\in Q_1$;
    \item  $\f u\beta^{-1}\sqsubseteq_r\f x$ for some $\beta\in Q_1$;
    \item $\alpha\f u\beta^{-1}\sqsubseteq\f x$ for some $\alpha,\beta\in Q_1$.
\end{itemize}
Dually, a (possibly infinite) substring $\f u$ of a (possibly infinite) string $\f x$ is said to be an \emph{image substring} if one of the following holds:
\begin{itemize}
    \item $\f u=\f x$;
    \item $\alpha^{-1}\f u\sqsubseteq_l\f x$ for some $\alpha\in Q_1$;
    \item $\f u\beta\sqsubseteq_r\f x$ for some $\beta\in Q_1$;
    \item $\alpha^{-1}\f u\beta\sqsubseteq\f x$ for some $\alpha,\beta\in Q_1$.
\end{itemize}
\end{defn}

It will be useful to mark the location of a particular substring of a (possibly infinite) string instead of just mentioning the substring. For example, the string $aB$ is a substring of the string $aBaB$ but it occurs multiple times, so it is unclear as to which substring we are referring to.

\begin{defn}\label{defn: marked string}
Given a (possibly infinite) string $\f x$, define \emph{the indexing set} $I(\f x)$ of $\f x$ as follows.
\begin{itemize}
    \item If $\f x$ is finite then $I(\f x):=\{0,1,2,\cdots,|\f x|\}$.
    \item If $\f x$ is a left $\mathbb N$-string then $I(\f x):=\mathbb Z^+\cup\{0,\infty\}$.
    \item If $\f x$ is a right $\mathbb N$-string then $I(\f x):=\mathbb Z^-\cup\{-\infty\}$.
    \item If $\f x$ is a $\mathbb Z$-string then $I(\f x):=\mathbb Z\cup\{-\infty,\infty\}$.
\end{itemize}

A \emph{marked string} is a triple $\dot{\f x}:=(\f x,k_1,k_2)$, where $\f x$ is a (possibly infinite) string, $k_1,k_2\in I(\f x)$ with $k_1\leq k_2$. Denote the set of all marked strings by $\marst$.
\end{defn}

\begin{exmp}
Consider the string $\f x:=ab^{-1}ab^{-1}ab^{-1}$ in $\Lambda_2$. Then the marked string $\dot{\f x_1}:=(\f x,0,2)$ denotes $ab^{-1}ab^{-1}\underline{ab^{-1}}$, whereas $\dot{\f x_2}:=(\f x,2,4)$ denotes $ab^{-1}\underline{ab^{-1}}ab^{-1}$.
\end{exmp}

Let $\dot{\f x}:=(\f x,k_1,k_2)$ be a marked string. Define maps $\vartheta_l,\vartheta_r:\marst\to\{-1,0,1\}$ by $$\vartheta_l(\dot{\f x}):=
\begin{cases}
    1&\text{ if }\delta(\alpha_{k_2+1})=1,\\
    -1&\text{ if }\delta(\alpha_{k_2+1})=-1,\\
    0&\text{ if }\alpha_{k_2+1}\text{ does not exist},
\end{cases}
\hspace{2mm} 
\vartheta_r(\dot{\f x}):=
\begin{cases}
    1&\text{ if }\delta(\alpha_{k_1})=-1,\\
    -1&\text{ if }\delta(\alpha_{k_1})=1,\\
    0&\text{ if }\alpha_{k_1}\text{ does not exist}.    
\end{cases}
$$

\begin{rmk}\label{rem: factor image by marked string}
Let $\dot{\f x}:=(\f x,k_1,k_2)$ be a marked string. Then $\f x$ can be written as $\f x_2\f u\f x_1$, where the last syllable of $\f x_1$ is $\alpha_{k_1}$ if $|\f x_1|>0$ and the first syllable of $\f x_2$ is $\alpha_{k_2+1}$ if $|\f x_2|>0$. If $\vartheta_l(\dot{\f x}),\vartheta_r(\dot{\f x})\in\{0,-1\}$ (resp. $\{0,1\}$) then $\f u$ is a factor (resp. image) substring of $\f x$.
\end{rmk}

For a string $\f x$, let $\INF{\f x}$ denote the $\mathbb Z$-string obtained by concatenation of countably many copies of $\f x$ on left as well as right. There is an action of $\mathbb Z$ on the set of finite substrings of $\INF{\f x}$ by translation. Let $\FacZ{\f x}$ and $\ImZ{\f x}$ denote the sets of finite factor substrings of $\INF{\f x}$ and finite image substrings of $\INF{\f x}$ respectively up to this $\mathbb Z$-action.

For band modules $M$ and $N$, Krause \cite{Krause1991MapsBT} gave a basis of $\mathrm{Hom}_\Lambda(M,N)$--the elements of this basis are known as \emph{graph maps}. This is the content of the following theorem whose presentation closely follows \cite[Theorem~3.12]{dequêne2023generalization}. 
\begin{thm}
Let $\f b$ and $\f b'$ be two bands, $l,l'\in\mathbb N$, and $\lambda,\lambda'\in\C K^\times$.
\begin{enumerate}
\item \label{morphisms between bands first condition} If $B(\f b,1,\lambda)\not\cong B(\f b',1,\lambda')$, then a basis of $\mathrm{Hom}_\Lambda(B(\f b,l,\lambda),B(\f b',l',\lambda'))$ is in bijection with the set 
\begin{align*}
\{(\f x,\f y)\in\FacZ{\f b}\times\ImZ{\f b'}\mid\f x=\f y\text{ or }\f x=\f y^{-1}\}\times V_{l,l'},   
\end{align*}
where $V_{l,l'}$ is a basis of the vector space $\mathrm{Hom}_\C K(\C K^l,\C K^{l'})$.

\item If $B(\f b,1,\lambda)\cong B(\f b',1,\lambda')$, then a basis of $\mathrm{Hom}_\Lambda(B(\f b,l,\lambda),B(\f b',l',\lambda'))$ is in bijection with the set mentioned in (\ref{morphisms between bands first condition}) union with a basis of the space $\mathrm{Hom}_{\C K[x]}\left(\C K[x]/(x^l),\C K[x]/(x^{l'})\right)$.
\end{enumerate}
\end{thm}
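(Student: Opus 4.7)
The statement is Krause's classification of Hom spaces between string and band modules specialised to the band-to-band case, so my plan follows the Krause/Crawley-Boevey template: construct a canonical family of \emph{graph maps} indexed by the asserted basis set, verify linear independence by analysing their action on canonical basis vectors, and establish spanning by an inductive reduction argument.

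For the construction, fix a pair $(\f x, \f y)$ with $\f x \in \FacZ{\f b}$, $\f y \in \ImZ{\f b'}$ and $\f x = \f y^{\pm 1}$, together with $f \in V_{l,l'}$. Choosing a representative occurrence of $\f x$ in $\INF{\f b}$, the factor condition implies that an appropriate quotient of $B(\f b, l, \lambda)$ is canonically isomorphic to $\C K^l \otimes M(\f x)$; dually, $\C K^{l'} \otimes M(\f y)$ embeds as a submodule of $B(\f b', l', \lambda')$. The graph map $\phi_{\f x, \f y, f}$ is then the composite of the quotient, the identification $M(\f x) \cong M(\f y^{\pm 1})$, the tensor map $f \otimes \mathrm{id}$, and the inclusion. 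The fact that the relevant quotient is a $\Lambda$-module (rather than merely a vector space) relies on the definition of the factor substring, which prescribes precisely that no arrow action ``escapes'' from $\f x$ into the discarded part of the band; the dual statement holds for $\f y$ being an image substring. Linear independence is then immediate: the basis vector at the centre of $\f x$ in $B(\f b, l, \lambda)$ maps into the span of the basis vectors at the centre of $\f y$ in $B(\f b', l', \lambda')$, scaled by $f$, and distinct triples $(\f x, \f y, f)$ have linearly independent leading components.

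The main obstacle is spanning. Given an arbitrary $\phi \in \mathrm{Hom}_\Lambda(B(\f b, l, \lambda), B(\f b', l', \lambda'))$, I would expand $\phi$ in the canonical bases and exploit its compatibility with both the direct and inverse arrow actions. These compatibilities force each nonzero matrix component of $\phi$ to be supported along a common subword of $\INF{\f b}$ and $\INF{\f b'}$, and the boundary syllables of the supporting subword must fit the factor/image templates of Definition \ref{defn:factor image} (any violation produces a contradiction by applying the appropriate arrow at the boundary and comparing coefficients). One then picks a graph map corresponding to a maximal such supporting subword, subtracts it off, and iterates on a complexity measure such as the maximum length of a nonzero matrix component; this process terminates after finitely many steps because $\f b$ and $\f b'$ are finite cyclic words and the supports shrink strictly at each stage.

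Finally, for part (2), when $B(\f b, 1, \lambda) \cong B(\f b', 1, \lambda')$ the $\mathbb{Z}$-strings $\INF{\f b}$ and $\INF{\f b'}$ coincide up to cyclic permutation and possible inversion, so there is a \emph{global} alignment that is not captured by any strict substring pair. This alignment is compatible with every $\C K[x]$-linear map between the Jordan fibres $\C K[x]/(x^l)$ and $\C K[x]/(x^{l'})$, producing an additional family of morphisms whose count is exactly $\dim_{\C K} \mathrm{Hom}_{\C K[x]}(\C K[x]/(x^l), \C K[x]/(x^{l'}))$. These are manifestly distinct from (and linearly independent of) the graph maps of part (1), because they are faithful on every canonical basis vector whereas any single graph map has strictly smaller support. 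Combining the two families gives the full Hom space and completes the proof.
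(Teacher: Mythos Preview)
The paper does not prove this theorem at all: it is quoted as a background result from Krause \cite{Krause1991MapsBT} (with presentation following \cite{dequêne2023generalization}), and is used only via its corollaries. So there is no ``paper's own proof'' to compare against; your sketch is an attempt to supply what the paper deliberately outsources.

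That said, your outline is the standard Krause/Crawley-Boevey strategy and is broadly on the right track, but the spanning argument as written has a genuine gap. You claim that after subtracting a graph map supported on a maximal common subword the ``supports shrink strictly at each stage''; this is not automatic. Subtracting one maximal-support graph map can leave other components of the same maximal length untouched, and can even create new nonzero entries by interference. The usual fix is either to work with a finer well-ordering (e.g.\ lexicographic on the pair \emph{(maximal support length, number of components of that length)}), or---closer to Krause's original argument---to pass to the covering $\mathbb Z$-string and analyse the induced map there, where the factor/image decomposition becomes a direct sum and the graph maps are visibly a basis. Your linear-independence step also leans on an undefined ``centre'' of $\f x$ and an unjustified ``leading component'' claim; distinct pairs $(\f x,\f y)$ can overlap heavily, and independence is normally extracted from the boundary behaviour (the syllables just outside the factor/image window), not from a single interior vertex. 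None of this is fatal to the approach, but as written the argument is a plan rather than a proof.
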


The following two consequences of the above theorem will be crucial in \S~\ref{sec: main} where we give a combinatorial characterisation of band bricks.

\begin{cor}\label{cor:existence of morphisms between band modules}
Let $\f b_1$ and $\f b_2$ be bands, $\lambda_1,\lambda_2\in\C K^\times$. There exists a non-trivial(=non-zero and non-identity) morphism $B(\f b_1,1,\lambda_1)\to B(\f b_2,1,\lambda_2)$ if and only if there is a finite string $\f u$ that occurs as a factor substring of $\INF{\f b_1}$ as well as an image substring of $\INF{\f b_2}$ or $\INF{(\f b_2^{-1})}$.
\end{cor}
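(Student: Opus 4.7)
The plan is to specialise the theorem stated just above to $l = l' = 1$ and unpack the resulting basis description. Setting
$$P := \{(\f x, \f y) \in \FacZ{\f b_1} \times \ImZ{\f b_2} : \f x = \f y \text{ or } \f x = \f y^{-1}\},$$
and using that $\dim V_{1,1} = 1$, I will show that a non-trivial morphism exists if and only if $P \neq \emptyset$, and then translate this into the stated combinatorial condition on $\f u$.

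In the non-isomorphic case of the theorem, the basis of $\mathrm{Hom}_\Lambda(B(\f b_1,1,\lambda_1),B(\f b_2,1,\lambda_2))$ is in bijection with $P$. Since non-isomorphic modules admit no identity morphism, every non-zero morphism is automatically non-trivial, so existence is equivalent to $P \neq \emptyset$. In the isomorphic case, the basis is $P$ together with one extra element coming from $\mathrm{Hom}_{\C K[x]}(\C K[x]/(x), \C K[x]/(x)) \cong \C K$; by the construction of Krause's basis, this extra element corresponds to a scalar multiple of the canonical isomorphism between the two band modules (encoding the Jordan-block action on the periodic direction, which is trivial for $l=1$). Hence non-identity morphisms, i.e.\ those not lying in this one-dimensional summand, exist if and only if $P \neq \emptyset$.

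To finish, I translate the condition $P \neq \emptyset$ into the statement of the corollary. A pair $(\f x, \f y) \in P$ with $\f x = \f y$ gives $\f u := \f x$, which is simultaneously a factor substring of $\INF{\f b_1}$ and an image substring of $\INF{\f b_2}$. A pair with $\f x = \f y^{-1}$ gives $\f u := \f x$, a factor substring of $\INF{\f b_1}$; inverting the containment $\alpha^{-1} \f y \beta \sqsubseteq \INF{\f b_2}$ that witnesses $\f y$ being an image substring of $\INF{\f b_2}$ yields $\beta^{-1} \f u \alpha \sqsubseteq \INF{(\f b_2^{-1})}$, showing that $\f u$ is an image substring of $\INF{(\f b_2^{-1})}$. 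The converse directions are symmetric: a finite $\f u$ which is a factor of $\INF{\f b_1}$ and image of $\INF{\f b_2}$ gives the pair $(\f u, \f u) \in P$, while one whose image role is played in $\INF{(\f b_2^{-1})}$ gives the pair $(\f u, \f u^{-1}) \in P$ by the same duality.

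The only mildly delicate step is the identification in the isomorphic case that the extra $\C K$-summand is genuinely the span of the scalar multiples of the canonical isomorphism (rather than containing a graph map); this is intrinsic to Krause's theorem, which exhibits the graph maps and the $\C K[x]$-scalars as complementary parts of the basis. Once that separation is granted, the rest is a purely formal unpacking of the $l=l'=1$ case together with the elementary inversion duality for image substrings of $\mathbb Z$-strings.
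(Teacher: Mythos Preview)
Your proof is correct and is exactly the natural specialisation of Krause's theorem to $l=l'=1$ that the paper intends; the paper in fact gives no explicit proof of this corollary, treating it as an immediate consequence. Your careful handling of the isomorphic case (identifying the extra $\C K[x]$-summand with the scalar multiples of the identity) and the inversion duality translating $\f x=\f y^{-1}$ into the $\INF{(\f b_2^{-1})}$ clause are precisely the details one would supply if asked to expand the paper's implicit argument.
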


The next corollary gives a criterion for a band module to be a brick.
\begin{cor}\label{cor: brick criteria}
Let $\f b$ be a band, $l\in\mathbb N$ and $\lambda\in\C K^\times$. Then the band module $B(\f b,l,\lambda)$ is a brick if and only if $l=1$ and there is no finite string $\f x$ that is a factor substring of $\INF{\f b}$ as well as an image substring of either $\INF{\f b}$ or $\INF{(\f b^{-1})}$.
\end{cor}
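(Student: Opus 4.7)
The plan is to apply the theorem of Krause recalled just above, this time in case (2) with $M=N=B(\f b,l,\lambda)$, and read off exactly when the resulting endomorphism algebra has dimension one. Since $\C K$ is algebraically closed, any local finite-dimensional $\C K$-algebra of $\C K$-dimension $1$ is isomorphic to $\C K$, so ``brick'' is equivalent to $\dim_\C K \mathrm{End}_\Lambda(B(\f b,l,\lambda))=1$.

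First I would handle the easy direction concerning $l$. Case~(2) of the theorem asserts that a basis of $\mathrm{End}_\Lambda(B(\f b,l,\lambda))$ is the disjoint union of a graph-map part (the pairs $(\f x,\f y)\in\FacZ{\f b}\times\ImZ{\f b}$ with $\f x=\f y$ or $\f x=\f y^{-1}$, each tensored with a basis $V_{l,l}$ of $\mathrm{Hom}_\C K(\C K^l,\C K^l)$) and a copy of a basis of $\mathrm{Hom}_{\C K[x]}(\C K[x]/(x^l),\C K[x]/(x^l))$. The latter space has $\C K$-dimension $l$, so if $l\geq 2$ then $\dim_\C K\mathrm{End}_\Lambda(B(\f b,l,\lambda))\geq l>1$, ruling out the brick property. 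Hence any brick forces $l=1$.

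Next I would specialize to $l=1$. In that case $V_{1,1}$ and $\mathrm{Hom}_{\C K[x]}(\C K[x]/(x),\C K[x]/(x))$ are both one-dimensional, so
\[\dim_\C K\mathrm{End}_\Lambda(B(\f b,1,\lambda))\;=\;|\{(\f x,\f y)\in\FacZ{\f b}\times\ImZ{\f b}\mid\f x=\f y\text{ or }\f x=\f y^{-1}\}|\;+\;1,\]
where the ``$+1$'' accounts for the identity endomorphism coming from the $\C K[x]$-block. Hence $B(\f b,1,\lambda)$ is a brick if and only if the graph-map pair set is empty.

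Finally I would translate the emptiness condition into the form stated. The case $\f x=\f y$ says there is a finite string that is simultaneously a factor substring of $\INF{\f b}$ and an image substring of $\INF{\f b}$. For $\f x=\f y^{-1}$, I would observe (exactly as in the proof of Corollary~\ref{cor:existence of morphisms between band modules}) that inversion exchanges the roles of factor and image substrings and sends $\INF{\f b}$ to $\INF{(\f b^{-1})}$; consequently $\f y\in\ImZ{\f b}$ is equivalent to $\f y^{-1}$ being an image substring of $\INF{(\f b^{-1})}$. The union of the two cases therefore says precisely that some finite string $\f x$ is a factor substring of $\INF{\f b}$ and an image substring of $\INF{\f b}$ or $\INF{(\f b^{-1})}$, which is the negation of the condition in the corollary. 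The only mildly delicate point is making the ``$+1$'' bookkeeping explicit so one is not tempted to also count the ``trivial'' pair $(\f b,\f b)$ among graph maps as the identity; once this is clear the argument is a direct specialization of the preceding theorem.
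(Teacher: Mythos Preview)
Your proposal is correct and follows exactly the approach the paper intends: the corollary is stated without proof precisely because it is a direct specialisation of Krause's theorem in case~(2) with $\f b=\f b'$, $l=l'$, $\lambda=\lambda'$, and your argument spells this out faithfully. One cosmetic remark: the aside about $\C K$ being algebraically closed is unnecessary, since the paper defines a brick as a module with $\mathrm{End}_\Lambda(-)\cong\C K$, so $\dim_{\C K}\mathrm{End}_\Lambda(-)=1$ is equivalent by definition (any one-dimensional $\C K$-algebra is $\C K$).
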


\section{Band Bricks for $\Lambda_N$}\label{sec: band bricks in lamdaN}
In this section, we give an overview of the result proved in \cite{dequêne2023generalization} along with an extension to that result. To begin with, we need some notations and terminologies related to combinatorics of words.

A non-empty finite linearly ordered alphabet will be denoted $\s A$ unless otherwise stated. The collection of all finite-length $\s A$-words (including the empty word) will be denoted $\s A^*$. Its subset consisting of the positive-length words will be denoted $\s A^+$. Unlike strings, we read words from left to right. For example, if $\s w:=n_1n_2\cdots n_k$ is a word then $n_1$ is the first letter of $\s w$ and $n_k$ is the last letter of $\s w$. 

\begin{defn}\label{defn: pcw}
\cite[\S~1.1]{dequêne2023generalization} Let $\s w\in\s A^+$. The \emph{Burrows-Wheeler transformation} of $\s w$, denoted by $\s{BW}(\s w)$, is defined as follows. Let $\s w_1,\cdots,\s w_k$ be all the cyclic permutations of $\s w$ arranged in the lexicographic order. Let $n_1,\cdots,n_k$ be the last letters of $\s w_1,\cdots,\s w_k$ respectively. Then $\s{BW}(\s w):=n_1n_2\cdots n_k$. If $\s w$ is primitive and we have $n_1\geq n_2\geq \cdots\geq n_k$ then we say that $\s w$ is \emph{perfectly clustering}.
\end{defn}

A \emph{primitive $\s A$-word} $\s w$ is a word that is not a $k$-fold concatenation of a word for $k\geq2$. An equivalent criterion for a primitive $\s A$-word to be perfectly clustering is as follows.

\begin{prop}\label{prop: eq criteria pcw}
\cite[Proposition~5.4]{dequêne2023generalization} A primitive $\s A$-word $\s w$ is perfectly clustering if and only if there are no cyclic permutations $\s w'=n'\s zm'\s z'$ and $\s w''=n''\s zm''\s z''$ of $\s w$ for some $\s z,\s z',\s z''\in\s A^*$ such that $n'<n''$ and $m'<m''$.
\end{prop}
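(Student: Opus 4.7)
The plan is to prove both directions by contraposition, converting the global perfectly-clustering condition on the Burrows-Wheeler sequence into a localised witness about two cyclic permutations, and vice versa.

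For the contrapositive of the ``only if'' direction, I would suppose that cyclic permutations $\s w'=n'\s zm'\s z'$ and $\s w''=n''\s zm''\s z''$ of $\s w$ exist with $n'<n''$ and $m'<m''$. The key move is to rotate each word one position to the left, producing the cyclic permutations $\s u':=\s zm'\s z'n'$ and $\s u'':=\s zm''\s z''n''$ of $\s w$. They share the prefix $\s z$ and then diverge at position $|\s z|+1$ with $m'<m''$, so $\s u'$ comes strictly before $\s u''$ in lexicographic order, yet its last letter $n'$ is strictly less than $n''$. This places a strictly smaller letter at an earlier position of the sorted list of last letters, contradicting the weakly decreasing requirement in Definition~\ref{defn: pcw}. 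Hence $\s w$ is not perfectly clustering.

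For the contrapositive of the ``if'' direction, I would assume $\s w$ is not perfectly clustering. Then in the lex-sorted list $\s w_1\leq\cdots\leq\s w_k$ of cyclic permutations there exists $i$ with last letters $n_i<n_{i+1}$. Since $n_i\neq n_{i+1}$, the permutations $\s w_i$ and $\s w_{i+1}$ are distinct; let $\s p$ be their longest common prefix and write $\s w_i=\s pa\s q_i$ and $\s w_{i+1}=\s pb\s q_{i+1}$ with $a<b$ and $|\s q_i|=|\s q_{i+1}|$. The subtle point is to show that $\s q_i$ and $\s q_{i+1}$ are non-empty: if they were empty we would have $|\s p|=|\s w|-1$, and since the multisets of letters of $\s w_i$ and $\s w_{i+1}$ coincide (both are cyclic permutations of the same word $\s w$), this would force $a=b$, contradicting $a<b$. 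Writing $\s q_i=\s z'n_i$ and $\s q_{i+1}=\s z''n_{i+1}$, the rotations that bring the last letters to the front give the cyclic permutations $n_i\s pa\s z'$ and $n_{i+1}\s pb\s z''$ of $\s w$, which are precisely of the asserted form, with $\s z:=\s p$, $n':=n_i$, $n'':=n_{i+1}$, $m':=a$, $m'':=b$.

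The main obstacle I anticipate is verifying non-emptiness of the tails $\s q_i$ and $\s q_{i+1}$ in the second paragraph; every other step is either lexicographic bookkeeping or a direct rewriting into the target form. This non-emptiness is the only place where the multiset constraint on cyclic permutations of a fixed word is essential. Primitivity of $\s w$ does not directly enter this step, but it naturally features in the setup by guaranteeing that all $k$ cyclic permutations of $\s w$ are pairwise distinct, so that the lex-sorted sequence is well defined without multiplicities.
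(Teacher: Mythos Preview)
The paper does not actually prove this proposition; it is merely quoted with a citation to \cite[Proposition~5.4]{dequêne2023generalization}, so there is no in-paper argument to compare against.

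Your proof is correct. Both directions are handled cleanly by the one-step rotation trick, and the only non-bookkeeping step---the non-emptiness of $\s q_i$ and $\s q_{i+1}$ via the multiset argument---is sound: if $\s w_i=\s pa$ and $\s w_{i+1}=\s pb$ are both rearrangements of the same word, subtracting the common multiset of $\s p$ forces $a=b$. Primitivity is only used implicitly to ensure that the $k$ cyclic permutations are distinct so that the lex order is strict; you could alternatively (as you in fact do) deduce $\s w_i\neq\s w_{i+1}$ directly from $n_i\neq n_{i+1}$. One cosmetic point: in the first direction you do not need $\s u'$ and $\s u''$ to be adjacent in the sorted list---any pair of positions $i<j$ with last letters $n_i<n_j$ already violates the weakly decreasing condition---and your phrasing correctly reflects this.
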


\begin{exmp}
Let $\s A:=\{1,2,3,4\}$ with the usual ordering be a linearly ordered alphabet. The $\s A$-word $121434$ is not a perfectly clustering word as two of its cyclic permutations are $121434$ and $341214$, and $1<3$ and $2<4$.
\end{exmp}

For the convenience of the reader, we repeat in Figure \ref{fig:lambda_n} the presentation of the gentle algebra $\Lambda_N$, for $N\geq2$, studied in \cite{dequêne2023generalization}.
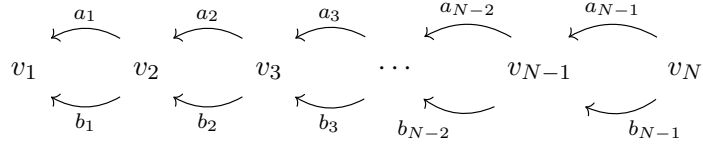
\begin{figure}[h]
    \centering
    \[\begin{tikzcd}
v_1 & v_2 \arrow[l, "b_1", bend left, shift left=2] \arrow[l, "a_1"', bend right, shift right=2] & v_3 \arrow[l, "b_2", bend left, shift left=2] \arrow[l, "a_2"', bend right, shift right=2] & \cdots \arrow[l, "a_3"', bend right, shift right=2] \arrow[l, "b_3", bend left, shift left=2] & v_{N-1} \arrow[l, "a_{N-2}"', bend right, shift right=2] \arrow[l, "b_{N-2}", bend left, shift left=2] & v_N \arrow[l, "b_{N-1}", bend left, shift left=2] \arrow[l, "a_{N-1}"', bend right, shift right=2]
\end{tikzcd}\]
    \caption{$\Lambda_N$ with $\rho=\{a_1b_2,a_2b_3,\cdots,a_{N-2}b_{N-1},b_1a_2,b_2a_3,\cdots,b_{N-2}a_{N-1}\}$}
    \label{fig:lambda_n}
\end{figure}
Let $\s A:=\{2,\cdots,N\}$ be an alphabet linearly ordered with the usual ordering. Associate to each integer $n\in\s A$ a cyclic string $\f b_n:=a_1a_2\cdots a_{n-1}b_{n-1}^{-1}\cdots b_2^{-1}b_1^{-1}$. Given a primitive word $\s z=n_1\cdots n_k$, define $\varphi(\s z):=\f b_{n_k}\cdots\f b_{n_1}$. This defines an injective map $\varphi$ from the set of primitive words to the set of bands in $\Lambda_N$.

The following result is one of the main results of \cite{dequêne2023generalization} which characterises whether a word $\s w$ is perfectly clustering in terms of whether the band module $B(\varphi(\s w),1,\lambda)$ is a brick. 

\theirmainthm*

Note that the map $\varphi$ is not surjective. For example, the band $a_1a_2b_2^{-1}a_2b_2^{-1}b_1^{-1}$ does not appear in the image of $\varphi$. In order to describe an appropriate extension $\widetilde{\varphi}$ of $\varphi$, we extend the alphabet $\s A$ to $\widetilde{\s A}:=\{1,2,\cdots,N\}$ with the usual ordering. The $\widetilde{\s A}$-words corresponding to elements of $\s{Cyc}(\Lambda_N)$ will be some special words called \emph{crowns}.

\begin{defn}\label{defn: zigzagcrown}
A word $\s w=n_1\cdots n_k$ is called \emph{zigzag} if for every $i\in\{1,\cdots,k-1\}$ exactly one of the following happens:
\begin{itemize}
    \item $n_i<n_{i+1}$ if and only if $i$ is odd;
    \item $n_i<n_{i+1}$ if and only if $i$ is even.
\end{itemize}
Denote the set of all zigzags over an alphabet $\s A$ by $\ZZ$.

A zigzag $\s w$ is called a \emph{crown} if every cyclic permutation of $\s w$ is a zigzag. Denote the set of all crowns over an alphabet $\s A$ by $\crowns$.
\end{defn}

\begin{rmk}
It is trivial to note that every crown is of even length.
\end{rmk}

The standard partition of a string described in Remark \ref{rem:alt string repn} provides a way to describe the inverse of the yet-to-be-defined extension $\widetilde\varphi$ of $\varphi$ by noting down the vertices where signs change. Given a band $\f b$, let its standard partition be $\f b=\f x_{2k}\cdots\f x_2\f x_1$. The word $\widetilde{\varphi}^{-1}(\f b)$ is defined to be $v_{n_1}\cdots v_{n_{2k}}$, where $v_{n_i}=s(\f x_i)$ for $i\in\{1,\cdots,2k\}$. Noting that if $\delta(\f x_i)=-1$ (resp. $\delta(\f x_i)=1$) and $t(\f x_i)=v_j$ then $j>i$ (resp. $j<i$), we conclude that $\widetilde{\varphi}^{-1}(\f b)$ is a crown.

\begin{exmp}\label{exmp: new association in lambdaN}
The standard partition of $\f b:=a_1a_2b_2^{-1}a_2b_2^{-1}b_1^{-1}$ is $(a_1a_2)(b_2^{-1})(a_2)(b_2^{-1}b_1^{-1})$, and hence $\widetilde\varphi^{-1}(\f b)=1324$.
\end{exmp}

Before we can define $\widetilde\varphi$, we need the following fact about string algebras.

\begin{prop}\label{prop:string at most one sequence}
Let $\Lambda$ be a string algebra whose underlying quiver is acyclic. Given $\f x, \f y\in\s{St}(\Lambda)$, there is at most one $\f z\in\s{St}(\Lambda)$ with $\delta(\f z)\neq0$ such that $\f y\f z\f x\in\s{St}(\Lambda)$.
\end{prop}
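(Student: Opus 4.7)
The plan is to suppose that two candidate strings $\f z_1,\f z_2$ exist with $\delta(\f z_i)\neq 0$ and $\f y\f z_i\f x\in\s{St}(\Lambda)$, and to show that $\f z_1=\f z_2$, splitting the argument on whether they share the same sign. Up to the symmetry $\f z\leftrightarrow\f z^{-1}$ (which swaps the roles of $\f x$ and $\f y$), I may assume in Case~1 that both are direct ($\delta=-1$), and in Case~2 that $\f z_1$ is direct while $\f z_2$ is inverse.

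In Case~1, I will show that both candidates coincide with a uniquely determined direct path cut at both ends by $\f x$ and $\f y$. First, the rightmost syllable $\alpha_1$ of any direct $\f z$ with $\f z\f x\in\s{St}(\Lambda)$ is forced: depending on whether the last syllable of $\f x$ is a direct arrow $\gamma$, an inverse arrow $\gamma^{-1}$, or $\f x=1_{(v,i)}$, this uniqueness comes respectively from Condition~(II) (at most one direct $\alpha$ with $\alpha\gamma\notin\rho$), Condition~(I) (the at most two arrows starting at $s(\gamma)$ include $\gamma$ itself), or property~(a) of $\varsigma$ (the two direct arrows at $v$ carry opposite $\varsigma$ signs). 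A dual analysis using~(II), (I), or~(b) determines the leftmost syllable $\alpha_L$ of $\f z_i$ from $\f y$. Moreover, Condition~(II) propagates inward: given $\alpha_j$ direct, at most one direct $\alpha_{j+1}$ satisfies $\alpha_{j+1}\alpha_j\notin\rho$, so extending $\alpha_1$ greedily yields a unique maximal direct path, and both $\f z_1$ and $\f z_2$ must be right substrings of it. Since $Q$ is acyclic, no arrow can recur in this direct path, so the position of the uniquely determined final syllable $\alpha_L$ is unambiguous; this forces $|\f z_1|=|\f z_2|$ and $\f z_1=\f z_2$.

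In Case~2, the underlying arrows of the direct $\f z_1$ form a directed path in $Q$ from $t(\f x)$ to $s(\f y)$, while the underlying arrows of the inverse $\f z_2$ form a directed path from $s(\f y)$ to $t(\f x)$. Concatenating these yields a closed directed walk in $Q$ of positive length $|\f z_1|+|\f z_2|$, which necessarily contains a directed cycle, contradicting acyclicity of $Q$. The hard part will be the bookkeeping in Case~1: every combination of boundary type of $\f x$ (and, dually, of $\f y$)---direct syllable, inverse syllable, or zero-length---must be matched with the correct axiom (I), (II), or (a)/(b) of the string algebra. Once this is in place, Case~2 closes the argument by the purely geometric observation that a closed directed walk of positive length in an acyclic quiver cannot exist.
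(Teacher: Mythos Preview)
Your proof is correct and follows the same two-case split as the paper (same sign vs.\ opposite sign, with Case~2 handled identically via a closed directed walk). The difference lies in Case~1: the paper argues more economically by taking the maximal common left substring $\f w$ of $\f z_1,\f z_2$; if $\f w$ is proper in both, Condition~(II) is violated immediately, and otherwise (say $\f w=\f z_1$) one writes $\f z_2=\f z'\f z_1$ and observes that $t(\f z_1)=s(\f y)=t(\f z_2)=t(\f z')$ together with $s(\f z')=t(\f z_1)$ makes $\f z'$ itself a nontrivial directed closed walk. In particular the paper never needs to pin down the first or last syllable of $\f z_i$ explicitly, so your three-way boundary analysis of $\f x$ (and dually of $\f y$) using (I), (II), and the sign maps is correct but unnecessary: the single equality $t(\f z_1)=t(\f z_2)$ already does the work of your ``$\alpha_L$ is uniquely determined and cannot recur'' step. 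Your route buys a more explicit picture of where the direct path starts and ends; the paper's buys brevity.
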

\begin{proof}
If possible, let $\f z_1,\f z_2$ be two distinct strings with $\delta(\f z_1),\delta(\f z_2)\neq 0$ such that $\f y\f z_i\f x\in\s{St}(\Lambda)$ for each $i=1,2$.

\textbf{Case 1.} $\delta(\f z_1)=\delta(\f z_2)$.

Let $\f w$ be the maximal common left substring of $\f z_1$ and $\f z_2$. If $\f w\sqsubset_l\f z_1$ and $\f w\sqsubset_l\f z_2$ then there exist distinct syllables $\alpha$ and $\beta$ with $\delta(\alpha)=\delta(\beta)$ such that $\alpha\f w$ and $\beta\f w$ are strings, which violates the definition of a string algebra (Condition \ref{at most one arrow} of Definition \ref{defn:string alg}).

Therefore, without loss, assume that $\f w=\f z_1$, which gives $\f y\f z_2\f x=\f y\f z'\f z_1\f x$ for some string $\f z'$ with $|\f z'|>0$ and $\delta(\f z')=\delta(\f z_2)$. Since $\f y\f z'\f z_1$ and $\f y\f z_1$ are strings we have $s(\f z')=t(\f z')$, which, given $\delta(\f z')\neq 0$, implies the existence of a directed cycle in the underlying quiver of $\Lambda$, a contradiction.
\\

\textbf{Case 2.} $\delta(\f z_1)\neq\delta(\f z_2)$.

Without loss, let $\f z_1=\alpha_k\cdots\alpha_1$ and $\f z_2=\beta_l^{-1}\cdots\beta_1^{-1}$, where each $\alpha_i,\beta_j\in Q_1$ for $i\in\{1,\cdots,k\}$ and $j\in\{1,\cdots,l\}$. Then we have a directed cycle $$t(\f z_1)\xrightarrow{\alpha_1}\cdots\xrightarrow{\alpha_k}s(\f z_1)\xrightarrow{\beta_l}\cdots\xrightarrow{\beta_1}t(\f z_1)$$ in the underlying quiver of $\Lambda$, a contradiction.
\end{proof}

Since the underlying quiver of $\Lambda_N$ is acyclic, the above proposition is applicable. Moreover, for $i,j\in\{1,2\cdots,N\}$, the existence of a string $\f z$ with $\delta(\f z)\neq0$ such that $1_{(v_i,1)}\f z1_{(v_j,1)}$ is a string is also guaranteed. Choose $\varsigma$ and $\varepsilon$ maps such that $1_{(v_{n+1},1)}b_n^{-1}1_{(v_n,1)}$ is a string for every $n\in\{1,\cdots N-1\}$. To summarize, given $i<j$, the only string $\f x$ such that $1_{(v_j,1)}\f x1_{(v_i,1)}$ is a string is $b_{j-1}^{-1}\cdots b_{i+1}^{-1}b_i^{-1}$, and the only string $\f x$ such that $1_{(v_i,1)}\f x1_{(v_j,1)}$ is a string is $a_i\cdots a_{j-2}a_{j-1}$.

Now we are ready to define $\widetilde\varphi:\s{Crowns}(\widetilde{\s A})\to\s{Cyc}(\Lambda_N)$. Given an $\widetilde{\s A}$-crown $\s w:=n_1n_2\cdots n_k$, set $\widetilde{\varphi}(\s w):=\f x_k\f x_{k-1}\cdots\f x_2\f x_1$, where $\f x_i$ is the unique string such that $1_{(v_{n_{i+1}},1)}\f x_i1_{(v_{n_i},1)}$ is a string for $i\in\{1,\cdots,k-1\}$, and $\f x_k$ is the unique string such that $1_{(v_{n_1},1)}\f x_i1_{(v_k,1)}$ is a string. The restriction of this map to the set of primitive crowns gives a one-to-one correspondence between the set of primitive crowns and the set of special cyclic permutations of bands. 

\begin{exmp}
Continuing from Example \ref{exmp: new association in lambdaN}, we indeed get $\widetilde\varphi(1324)=a_1a_2a_3b^{-1}_3b^{-1}_2a_2b^{-1}_2b^{-1}_1.$
\end{exmp}

The following result states that $\widetilde{\varphi}^{-1}\circ\varphi$ preserves and reflects perfectly clustering $\widetilde{\s A}$-words.

\begin{prop}\label{prop: new association consistent}
Suppose $\s A:=\{2,\cdots,N\}\subset\widetilde{\s A}:=\{1,\cdots,N\}$ are linearly ordered alphabets with the usual ordering. Let $\s w:=n_1\cdots n_k$ be an $\s A$-word and $\widetilde{\s w}:=\widetilde{\varphi}^{-1}(\varphi(\s w))=1n_11n_2\cdots n_{k-1}1n_k$ be the corresponding $\widetilde{\s A}$-word. Then $\s w$ is perfectly clustering if and only if $\widetilde{\s w}$ is perfectly clustering.
\end{prop}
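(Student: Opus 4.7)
The plan is to prove the contrapositive in both directions via the equivalent criterion of Proposition~\ref{prop: eq criteria pcw}. First, $\s w$ and $\widetilde{\s w}$ are simultaneously primitive: since the letter $1$ occupies exactly the odd positions of $\widetilde{\s w}$, any non-trivial power decomposition $\widetilde{\s w} = \s u^m$ must have even period, and deleting the $1$s yields a non-trivial decomposition of $\s w$. As perfect clustering forces primitivity, I may assume both words are primitive.

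The key structural observation is that the $2k$ cyclic permutations of $\widetilde{\s w}$ split by the parity of their starting position: those starting at odd positions read $1, n_i, 1, n_{i+1}, \ldots$, while those starting at even positions $2i$ read $n_i, 1, n_{i+1}, 1, \ldots$. Given a hypothetical bad pair $(\widetilde{\s w}', \widetilde{\s w}'') = (n'\s z m' \s z',\, n''\s z m''\s z'')$ in $\widetilde{\s w}$ with $n' < n''$ and $m' < m''$, I would rule out all parity combinations except ``both even''. If both are odd then $n' = n'' = 1$; if $\widetilde{\s w}'$ is even and $\widetilde{\s w}''$ is odd then $n' \in \s A$ and $n'' = 1$, contradicting $n' < n''$; if $\widetilde{\s w}'$ is odd and $\widetilde{\s w}''$ is even then $\s z$ must be empty (their second letters, respectively some element of $\s A$ and $1$, disagree), forcing $m' \in \s A > 1 = m''$.

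This leaves the case where $\widetilde{\s w}', \widetilde{\s w}''$ start at positions $2i, 2j$. In this aligned case every even-indexed letter of each cyclic permutation is $1$, so the position of strict inequality $m' < m''$ must be odd, say $2s+1$, and the common prefix $\s z$ has odd length $2s-1$ with the shape $1 n_{i+1} 1 \cdots 1 n_{i+s-1} 1$. This sets up a transparent bijection between bad pairs of $\widetilde{\s w}$ in this last case and bad pairs of $\s w$ at starting positions $i, j$ with $n_i < n_j$, $n_{i+t} = n_{j+t}$ for $1 \leq t \leq s-1$, and $n_{i+s} < n_{j+s}$; both implications of the proposition follow. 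I expect no serious obstacle in this argument; the only care required is in the parity/indexing bookkeeping, matching position $r$ of $\widetilde{\s w}$ with position $\lceil r/2 \rceil$ of $\s w$.
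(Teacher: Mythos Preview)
Your proof is correct and follows essentially the same route as the paper: both argue the contrapositive via Proposition~\ref{prop: eq criteria pcw}, using the alternating $1$/$\s A$ structure of $\widetilde{\s w}$ to show that any witnessing pair of cyclic permutations of $\widetilde{\s w}$ must start at even positions and hence, after deleting the $1$s, yields a witnessing pair for $\s w$ (and conversely by inserting $1$s). Your parity case analysis is slightly more systematic than the paper's direct argument that $n',m'\neq 1$, and you also make explicit the primitivity equivalence that the paper leaves tacit.
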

\begin{proof}
Suppose $\s w$ is not perfectly clustering. Then by Proposition \ref{prop: eq criteria pcw}, there are cyclic permutations $\s w':=n'\s zm'\s z'$ and $\s w'':=n''\s zm''\s z''$ of $\s w$ such that $n'<n''$ and $m'<m''$. Let $\s z:=m_1\cdots m_l$ and $\widetilde{\s z}:=1m_11\cdots 1m_l$. Then we have cyclic permutations $n'\widetilde{\s z}1m'\widetilde{\s z}'$ and $n''\widetilde{\s z}1m''\widetilde{\s z}''$ of $\widetilde{\s w}$ for some $\widetilde{\s A}$-words $\widetilde{\s z}'$ and $\widetilde{\s z}''$ with $n'<n''$ and $m'<m''$, which implies, by Proposition \ref{prop: eq criteria pcw}, that $\widetilde{\s w}$ is not perfectly clustering.

Conversely, suppose $\widetilde{\s w}$ is not perfectly clustering. Again by Proposition \ref{prop: eq criteria pcw}, there are cyclic permutations $\widetilde{\s w}':=n'\widetilde{\s z}m'\widetilde{\s z}'$ and $\widetilde{\s w}'':=n''\widetilde{\s z}m''\widetilde{\s z}''$ of $\widetilde{\s w}$ such that $n'<n''$ and $m'<m''$. If $n'=1$ then the first letter of $\widetilde{\s z}$ is not 1 and therefore $n''=1$, a contradiction. So $n'\neq 1$. Similarly, $m'\neq 1$. Therefore, $\widetilde{\s z}=1\widetilde{m}_11\cdots1\widetilde{m}_l1$ for some $l\geq0$ and $\widetilde{m}_i\in\s A$ for each $i\in\{1,\cdots,l\}$. Consequently, we have that $n'\widetilde{m}_1\cdots\widetilde{m}_lm'\s z'$ and $n''\widetilde{m}_1\cdots\widetilde{m}_lm''\s z''$ are cyclic permutations of $\s w$ with $n'<n''$ and $n''<m''$, where $\s z'$ and $\s z''$ are obtained by removing all occurrences of $1$ in $\widetilde{\s z}'$ and $\widetilde{\s z}''$ respectively. Therefore, once again, by Proposition \ref{prop: eq criteria pcw}, we conclude that $\s w$ is not perfectly clustering.
\end{proof}

We restrict the criterion (Proposition \ref{prop: eq criteria pcw}) for words to be perfectly clustering to define \emph{weakly perfectly clustering words}.

\begin{defn}\label{defn: weak perf clus}
A primitive crown $\s w$ is called \emph{weakly perfectly clustering} if none of the following happens.
\begin{enumerate}
    \item \label{axiom1 of WPCC} There exist cyclic permutations $\s w'$ and $\s w''$ of $\s w$ such that $\s w'=n'\s zm'\s z'$ and $\s w''=n''\s zm''\s z''$, where $n'<n'',\ m<m''$ and $|\s z|\geq 1$.
    \item \label{axiom2 of WPCC}  There exist cyclic permutations $\s w'$ and $\s w''$ of $\s w$ such that $\s w'=n'm'\s z'$ and $\s w''=n''m''\s z''$, where $n'<n'',\ m'<m''$, $n'<m'$ if and only if $n''<m''$, and $[n',m']\cap[n'',m'']\neq\emptyset$.
\end{enumerate}
\end{defn}
The following result which can be obtained as a corollary of Theorem \ref{thm: main} extends Theorem \ref{thm: their main thm}; the proof is deferred until \S~\ref{sec: main}.

\brickiffwpcc*

\section{Finite Traced Poset}\label{sec: fin traced poset}
So far the alphabet in consideration was linearly ordered. The ordering was based on the rule that if there is a path from $v_i$ to $v_j$ then $v_i>v_j$. However, to deal with more general string algebras, note that we need to consider alphabets that are not necessarily linearly ordered (it is trivial to construct such an example). Even this is not sufficient as the following example suggests.

\begin{exmp}\label{exmp: motivation of traced poset}
Consider the string algebra $\Gamma$ in Figure \ref{fig:motivation of traced poset} and the band $\f b=becd^{-1}ea^{-1}$. It can be checked that the band module $B(\f b,1,\lambda)$ for any $\lambda\in\C K^\times$ is a brick.

As discussed above, the alphabet $\s A:=\{v_1,v_2,v_3,v_4\}$ can be linearly ordered as follows:
$$v_1<v_2<v_3<v_4$$
Recall the definition of $\widetilde{\varphi}$ from the previous section. We have $\widetilde{\varphi}^{-1}(\f b)=v_1v_3v_2v_4$. But note that $v_1v_3v_2v_4$ is not a weakly perfectly clustering word over the linearly ordered alphabet $\s A$ as $v_1v_3v_2v_4$ and $v_2v_4v_1v_3$ are cyclic permutations of $\f b$ with $v_1<v_2$, $v_3<v_4$ and $[v_1,v_3]\cap[v_2,v_4]=\{v_2,v_3\}\neq\emptyset$. However, the paths $a$ and $d$ witnessing $v_1v_3$ and $v_2v_4$ respectively are disjoint.
\begin{figure}[h]
    \centering
    \begin{tikzcd}
    & v_3 \arrow[ld, "a"'] \arrow[dd, "e"] &                                      \\
v_1 &                                      & v_4 \arrow[ld, "d"] \arrow[lu, "c"'] \\
    & v_2 \arrow[lu, "b"]                  &                                     
\end{tikzcd}
    \caption{$\Gamma$ with $\rho=\{ac,bd\}$}
    \label{fig:motivation of traced poset}
\end{figure}
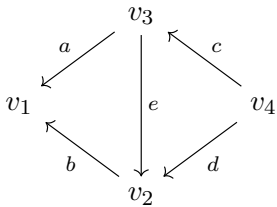
\end{exmp}

The above example motivates us to ``keep track of/trace'' appropriate paths in a partial order obtained from a string algebra. Thus we define a \emph{finite traced partially ordered set} below. We will see later that this serves as an alphabet to form crowns. Before that, we need a preliminary definition from order theory that is used to draw Hasse diagrams corresponding to certain posets.

\begin{defn}
Let $(P,<)$ be a strict partially ordered set and $x<y$ be elements of $P$. The element $y$ is said to \emph{cover} $x$ (equivalently, $x$ is \emph{covered by} $y$) if there is no element $z\in P$ such that $x<z<y$. 
\end{defn}

\begin{defn}\label{defn: traced poset}
A finite \emph{traced partially ordered set} $\s A=(A,<,T,\mu:A\to A)$ is a finite strict partially ordered set $(A,<)$ along with a set $T$ of strict monotone sequences of elements from $A$ that is closed under (possibly empty) subsequences satisfying the following properties.
\begin{enumerate}
    \item \label{at most two elements cover} For each $n\in A$, there are at most two elements which cover $n$. 
    \item [(1$'$)] \label{at most two elements cover op} For each $n\in A$, there are at most two elements covered by $n$.
    
    \item \label{at most one sequence} For each pair of elements $n,m\in A$, there is at most one sequence $\langle x_k\rangle^N_{k=1}$ in $T$ such that $x_1=n$ and $x_N=m$.
\end{enumerate}
In view of the above condition, let $$T_{(n,m)}:=
\begin{cases}
\langle x_k\rangle_{k=1}^N  &\text{ if such a sequence exists;}\\
\langle\rangle &\text{ otherwise.}
\end{cases}$$
\begin{enumerate}[start=3]
   \item \label{successor is a cover} Given $n<m$ in $A$, if $T_{(n,m)}=\langle x_k\rangle_{k=1}^N$ is a non-empty sequence then $x_k$ is a cover of $x_{k-1}$ for every $k\in\{2,\cdots,N\}$.

    \item[(3$'$)] Given $n>m$ in $A$, if $T_{(n,m)}=\langle x_k\rangle_{k=1}^N$ is a non-empty sequence then $x_{k-1}$ is a cover of $x_k$ for every $k\in\{2,\cdots,N\}$.
    
    \item\label{at most one essential neighbour} For each $m\in A$, there is at most one $n\in A$ such that $m<n$ and $T_{(m,n)}=\langle m,n\rangle$.

     \item[(4$'$)] For each $m\in A$, there is at most one $n\in A$ such that $n<m$ and $T_{(n,m)}=\langle n,m\rangle$.

     \item \label{mu map} The map $\mu:(A,<)\to(A,<)$ is an involution, isomorphism, and has no fixed points.

     \item \label{T and mu} If $T_{(n,m)}=\langle x_k\rangle_{k=1}^N$ then $T_{(\mu(m),\mu(n))}=\langle\mu(x_{N-k+1})\rangle_{k=1}^N$.



\end{enumerate}
\end{defn}

\begin{rmk}
Note that if $\s A$ is a finite traced partially ordered set then $\s A^{op}:=(A,<^{op},T^{op},\mu)$ is a finite traced partially ordered set.
\end{rmk}

The examples of finite traced posets are deferred until the next section.

The definition of zigzags and crowns naturally extends to partially ordered alphabets. However, to keep track of the data of traces, we define \emph{valid zigzags} and \emph{valid crowns} over a finite traced partially ordered set.

\begin{defn}\label{defn: valid zigzag}
An $\s A$-zigzag $\s w=n_1\cdots n_k$ is called a \emph{valid zigzag} if $T_{(n_i,n_{i+1})}$ is a non-empty sequence for each $i\in\{1,\cdots,k-1\}$. Denote the set of valid zigzags by $\ZZval$.
\end{defn}

\begin{defn}\label{defn: valid crown2}
An $\s A$-crown $\s w=n_1\cdots n_k$ is called \emph{valid} if $n_1\cdots n_kn_1$ is a valid zigzag. Denote the set of all valid $\s A$-crowns by $\crownsval$. Also denote the set of all primitive valid $\s A$-crowns by $\pcrownsval$.
\end{defn}

\begin{defn}\label{defn: spec crown}
A valid $\s A$-crown $\s w=n_1\cdots n_k$ is called \emph{special} if $n_1<n_2$.
Denote the set of all special $\s A$-crowns by $\crownsspec$. Also denote the set of all primitive special $\s A$-crowns by $\pcrownsspec$.
\end{defn}

The definition of weakly perfectly clustering words over a linearly ordered alphabet extends naturally to weakly perfectly clustering valid words over a finite traced poset. However, we need to define its further generalization, namely the concept of a \emph{weakly perfectly clustering pair of words} over a finite traced poset, that will be used later to give a criterion (Lemma \ref{lem: nontrivial morphism between bands criterion in terms of WPCC}) for the existence of a non-trivial morphism between certain band modules.

\begin{defn}\label{defn: weak perf clus2}
Given crowns $\s w_1$ and $\s w_2$, we say that the pair $(\s w_1,\s w_2)$ is \emph{weakly perfectly clustering} if none of the following happens.
\begin{enumerate}
    \item \label{axiom1 of WPCC2} There exist cyclic permutations $\s w'$ of $\s w_1^{2\cdot|\s w_2|}$ and $\s w''$ of $\s w_2^{2\cdot|\s w_1|}$ such that $\s w'=n'\s zm'\s z'$ and $\s w''=n''\s zm''\s z''$, where $n'<n'',\ m'<m''$ and $|\s z|\geq 1$.
    \item \label{axiom2 of WPCC2}  There exist cyclic permutations $\s w'$ of $\s w_1^{2\cdot|\s w_2|}$ and $\s w''$ of $\s w_2^{2\cdot|\s w_1|}$ such that $\s w'=n'm'\s z'$ and $\s w''=n''m''\s z''$, where $n'<n'',\ m'<m''$, $n'<m'$ if and only if $n''<m''$, and $T_{(n',m')}\cap T_{(n'',m'')}\neq\emptyset$.
\end{enumerate}
\end{defn}
\begin{rmk}\label{rmk: w is WPC iff ww is WPC}
Note that a valid crown $\s w$ is weakly perfectly clustering if and only if the pair $(\s w,\s w)$ is weakly perfectly clustering.
\end{rmk}

\section{From a string algebra to a finite traced poset}\label{sec:obtain traced poset}
The examples of string algebras that we saw so far were simple in the sense that there were no instances of a string $\f x$ such that $1_{(v,{-i})}\f x1_{(v,i)}$ is a string for a vertex $v$ and $i\in\{-1,1\}$. In other words, given a band $\f b$, if $1_{(v,i)}$ is a factor (resp. image) substring of $\INF{\f b}$ then $1_{(v,-i)}$ can never be image (resp. factor) substring of either $\INF{\f b}$ or $\INF{(\f b^{-1})}$. This is the reason why it was sufficient to consider only vertices in case of $\Lambda_N$. But the following example suggests that this is not true in case of more general string algebras.

\begin{exmp}\label{exmp: motivation of cov quiv}
Consider the string algebra $\Gamma'$ from Figure \ref{fig:motivation of cov quiv} and the band $\f b:=b_3b_2^{-1}b_1a_1^{-1}a_2a_3^{-1}$. Note that $\f b$ can be written as $b_3b_2^{-1}b_11_{(v_1,-i)}a_1^{-1}a_2a_3^{-1}1_{(v_1,i)}$ from which it is clear that $1_{(v_1,-i)}$ is a factor substring of $\INF{\f b}$ and $1_{(v_1,i)}$ is an image substring of $\INF{\f b}$.
\end{exmp}

\begin{figure}
    \centering
    \begin{tikzcd}
v_3 \arrow[rd, "a_3"] \arrow[dd, "a_2"'] &                                         & v_4                                      \\
                                         & v_1 \arrow[ru, "b_1"] \arrow[ld, "a_1"] &                                          \\
v_2                                      &                                         & v_5 \arrow[uu, "b_2"'] \arrow[lu, "b_3"]
\end{tikzcd}
    \caption{$\Gamma'$ with $\rho=\{b_1a_3,a_3b_1\}$}
    \label{fig:motivation of cov quiv}
\end{figure}
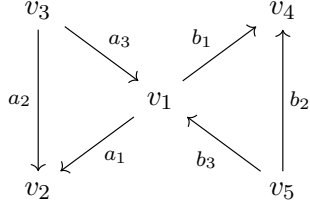

The above example motivates us to distinguish between the two zero-length strings corresponding to a vertex as elements of the finite traced poset. Therefore, we construct a quiver which we call \emph{the covering quiver} which ``unravels'' the underlying quiver of the given string algebra. As mentioned in the introduction, this covering quiver gives us a sense of the direction of strings in a string algebra.

\begin{defn}\label{defn: cov quiv}
Let $\Lambda$ be a string algebra whose underlying quiver is $Q=(Q_0,Q_1,s,t)$. Fix a choice of maps $\varsigma$ and $\varepsilon$. The \emph{covering quiver} of $\Lambda$, $\cq{}=(\cq{0},\cq{1},\cs,\ct)$, is defined as follows:
$$\cq{0}:=Q_0\times\{-1,1\},\ \cq{1}:=Q_1\cup Q_1^{-1},\ \cs(\alpha):=(s(\alpha),-\varsigma(\alpha)),\ \ct(\alpha):=(t(\alpha),\varepsilon(\alpha)).$$
\end{defn}

\begin{rmk}
We have a natural map $1_{(-)}:\cq{0}\to\s{St}(\Lambda)$ which takes $(v,i)$ to $1_{(v,i)}$.
\end{rmk}

Let us recall from \cite[\S~1]{mousavand2022tau} the class of (generalised) barbell algebras. We present here (Figure \ref{fig: barbell}) a particular generalised barbell algebra and use it as a running example.

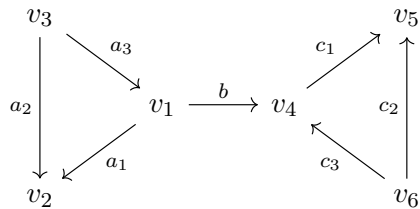
\begin{figure}[h]
    \centering
    \begin{tikzcd}
v_3 \arrow[rd, "a_3"] \arrow[dd, "a_2"'] &                                      &                       & v_5                                     \\
                                         & v_1 \arrow[r, "b"] \arrow[ld, "a_1"] & v_4 \arrow[ru, "c_1"] &                                         \\
v_2                                      &                                      &                       & v_6 \arrow[lu, "c_3"] \arrow[uu, "c_2"]
\end{tikzcd}
    \caption{$\Gamma''$ with $\rho=\{c_1ba_3,a_1a_3,c_1c_3\}$}
    \label{fig: barbell}
\end{figure}

\begin{exmp}\label{exmp: covering quiver}
Consider the string algebra $\Gamma''$ in Figure \ref{fig: barbell}. We fix $\varsigma$ and $\varepsilon$ in the following way:
\begin{align*}
(\varsigma(a_1),\varepsilon(a_1))=(1,-1),\,&(\varsigma(a_2),\varepsilon(a_2))=(1,1),\,(\varsigma(a_3),\varepsilon(a_3))=(-1,1),\\
&(\varsigma(b),\varepsilon(b))=(-1,1),\\
(\varsigma(c_1),\varepsilon(c_1))=(-1,1),\,&(\varsigma(c_2),\varepsilon(c_2))=(1,-1),\,(\varsigma(c_3),\varepsilon(c_3))=(-1,-1)
\end{align*}
Figure \ref{fig:covering quiver} shows the covering quiver for $\Gamma''$.

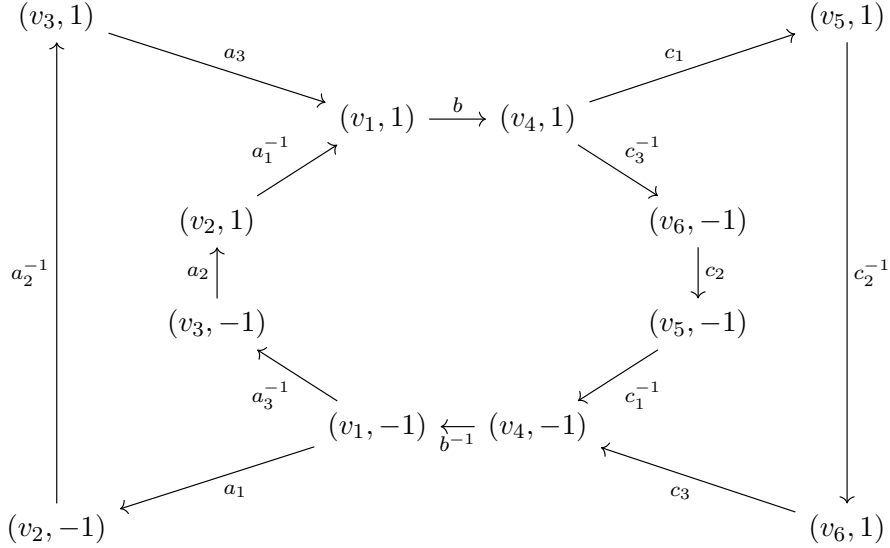
\begin{figure}[h]
    \centering
    \begin{tikzcd}[column sep=small]
{(v_3,1)} \arrow[rrd, "a_3"]         &                                  &                                                      &                                                     &                                   & {(v_5,1)} \arrow[ddddd, "c_2^{-1}"] \\
                                     &                                  & {(v_1,1)} \arrow[r, "b"]                             & {(v_4,1)} \arrow[rd, "c_3^{-1}"] \arrow[rru, "c_1"] &                                   &                                     \\
                                     & {(v_2,1)} \arrow[ru, "a_1^{-1}"] &                                                      &                                                     & {(v_6,-1)} \arrow[d, "c_2"]       &                                     \\
                                     & {(v_3,-1)} \arrow[u, "a_2"]      &                                                      &                                                     & {(v_5,-1)} \arrow[ld, "c_1^{-1}"] &                                     \\
                                     &                                  & {(v_1,-1)} \arrow[lu, "a_3^{-1}"] \arrow[lld, "a_1"] & {(v_4,-1)} \arrow[l, "b^{-1}"]                      &                                   &                                     \\
{(v_2,-1)} \arrow[uuuuu, "a_2^{-1}"] &                                  &                                                      &                                                     &                                   & {(v_6,1)} \arrow[llu, "c_3"]       
\end{tikzcd}
    \caption{Covering quiver for $\Gamma''$}
    \label{fig:covering quiver}
\end{figure}
\end{exmp}

Using the covering quiver, we define a relation $\prec$ on $\cq{0}$ as follows. 

Given $(v_1,i_1),(v_2,i_2)\in\cq{0}$, say $(v_1,i_1)\prec(v_2,i_2)$ if there is $\alpha\in Q_1$ such that $(v_2,i_2)\xrightarrow{\alpha}(v_1,i_1)$ or $(v_1,i_1)\xrightarrow{\alpha^{-1}}(v_2,i_2)$.

We also denote the transitive closure of $\prec$ by the same symbol. The following proposition shows that $\prec$ is a strict partial order on $\cq{0}$ for the string algebras that we are concerned with in this paper.

\begin{prop}\label{prop: poset from cov quiv}
Let $\Lambda$ be a string algebra whose underlying quiver is acyclic. Then the relation $\prec$ on $\cq{0}$ defined above is anti-symmetric.
\end{prop}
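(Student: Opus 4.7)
The plan is to reduce anti-symmetry of $\prec$ on $\cq{0}$ to the acyclicity of the underlying quiver $Q$, via one structural observation: every elementary instance of $(v_1,i_1)\prec(v_2,i_2)$ is witnessed by a genuine direct arrow $\alpha\in Q_1$ with $s(\alpha)=v_2$ and $t(\alpha)=v_1$. Once this is in place, the transitive closure will produce a directed path of positive length from $v_2$ to $v_1$ in $Q$ whenever $(v_1,i_1)\prec(v_2,i_2)$, and acyclicity then forbids both comparisons from holding simultaneously.

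First I would unwind the two clauses of the elementary step at the level of the underlying quiver. If $(v_2,i_2)\xrightarrow{\alpha}(v_1,i_1)$ is an arrow in $\cq{}$ coming from $\alpha\in Q_1$, then $\cs(\alpha)=(v_2,i_2)$ and $\ct(\alpha)=(v_1,i_1)$, so Definition~\ref{defn: cov quiv} forces $s(\alpha)=v_2$ and $t(\alpha)=v_1$. If instead $(v_1,i_1)\xrightarrow{\alpha^{-1}}(v_2,i_2)$ in $\cq{}$, then extending $\cs,\ct$ to $Q_1^{-1}$ by the string-algebra conventions $s(\alpha^{-1})=t(\alpha)$, $t(\alpha^{-1})=s(\alpha)$, $\varsigma(\alpha^{-1})=\varepsilon(\alpha)$, $\varepsilon(\alpha^{-1})=\varsigma(\alpha)$ again unpacks to $s(\alpha)=v_2$ and $t(\alpha)=v_1$. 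Hence a single elementary step always produces a length-one directed path $v_2\to v_1$ in the underlying quiver.

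Next, a routine induction on the number of elementary steps used in forming the transitive closure upgrades this to the assertion that $(v_1,i_1)\prec(v_2,i_2)$ implies the existence of a directed path of positive length from $v_2$ to $v_1$ in $Q$: the inductive step is just concatenation of the two paths furnished by the inductive hypothesis.

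To conclude, I would suppose for contradiction that both $(v_1,i_1)\prec(v_2,i_2)$ and $(v_2,i_2)\prec(v_1,i_1)$ hold. Gluing the two paths supplied above produces a directed closed walk in $Q$ of positive length, and therefore a directed cycle, contradicting the acyclicity of $Q$. The potentially worrying corner case $v_1=v_2$ with $i_1\neq i_2$ is handled in the same way, since each constituent path already has positive length. The only step demanding any real care is the translation of the inverse clause in the first paragraph back to a direct arrow in $Q$; once the $\cs,\ct$ conventions on $Q_1^{-1}$ are made explicit, everything else is automatic and there is no genuine obstacle.
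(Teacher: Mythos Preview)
Your proof is correct and is essentially the same argument as the paper's: both reduce to the observation that every elementary instance of $\prec$ projects to a direct arrow in $Q$, so a symmetric pair yields a directed closed walk and hence a cycle. Your write-up is arguably cleaner in that you make explicit why both clauses (the $\alpha$-clause and the $\alpha^{-1}$-clause) give an arrow $v_2\to v_1$ in $Q$, whereas the paper handles this via a diagram.
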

\begin{proof}
If not, then there exist $(v,i),(v',i')\in\cq{0}$ such that $(v,i)\prec(v',i')$ and  $(v',i')\prec(v,i)$. Figure \ref{fig:cov quiver antisymm} below demonstrates the situation. Corresponding to $(v',i')\prec(v,i)$, by transitivity of $\prec$, there is some $k\geq1$ such that for each $m\in\{1,\cdots,k\}$, there exists exactly one of $\alpha_m\in Q_1$ or $\beta_m^{-1}\in Q_1^{-1}$ witnessing $(v_{m-1},i_{m-1})\succ(v_m,i_m)$, where $(v_0,i_0):=(v,i)$ and $(v_k,i_k):=(v',i')$. Similarly, corresponding to $(v,i)\prec(v',i')$, there is some $l\geq1$ such that for each $j\in\{1,\cdots,l\}$, there exists exactly one of $\alpha'_j\in Q_1$ or ${\beta'_j}^{-1}\in Q_1^{-1}$ witnessing $(v'_{j-1},i_{j-1})\prec(v'_j,i_j)$, where $(v'_0,j_0):=(v,i)$ and $(v'_l,i_l):=(v',i')$.

\begin{figure}[h]
\centering

\begin{minipage}{0.5\textwidth}
\centering
\[\begin{tikzcd}[column sep=small]
                                                                                           & {(v_1,i_1)} \arrow[r, "\alpha_2", shift left=2] \arrow[ld, "\beta_1^{-1}", shift left=2]           & {(v_2,i_2)} \arrow[r, no head, dashed] \arrow[l, "\beta_2^{-1}", shift left=2]  & {(v_{k-1},i_{k-1})} \arrow[rd, "\alpha_k", shift left=2]            &                                                                                            \\
{(v,i)} \arrow[ru, "\alpha_1", shift left=2] \arrow[rd, "{\beta'}_1^{-1}"', shift right=2] &                                                                                                    &                                                                                 &                                                                     & {(v',i')} \arrow[lu, "\beta_k^{-1}", shift left=2] \arrow[ld, "\alpha'_l"', shift right=2] \\
                                                                                           & {(v'_1,i'_1)} \arrow[r, "{\beta'}_2^{-1}"', shift right=2] \arrow[lu, "\alpha'_1"', shift right=2] & {(v'_2,i'_2)} \arrow[r, no head, dashed] \arrow[l, "\alpha'_2"', shift right=2] & {(v'_{l-1},i'_{l-1})} \arrow[ru, "{\beta'}_l^{-1}"', shift right=2] &                                                                                           
\end{tikzcd}\]
\caption{}
\label{fig:cov quiver antisymm}
\end{minipage}%
\begin{minipage}{0.5\textwidth}
\centering
\[\begin{tikzcd}[column sep=small]
                         & v_1 \arrow[r, "\gamma_2"]    & v_2 \arrow[r, no head, dashed]                         & v_{k-1} \arrow[rd, "\gamma_k"] &                            \\
v \arrow[ru, "\gamma_1"] &                              &                                                        &                                & v' \arrow[ld, "\gamma'_l"] \\
                         & v'_1 \arrow[lu, "\gamma'_1"] & v'_2 \arrow[l, "\gamma'_2"] \arrow[r, no head, dashed] & v'_{l-1}                       &                           
\end{tikzcd}\]
\caption{}
\label{fig:underlying quiver of lambda antisymm}
\end{minipage}

\end{figure}
Therefore the underlying quiver of $\Lambda$ contains the quiver demonstrated in Figure \ref{fig:underlying quiver of lambda antisymm} as a subquiver, where $\gamma_i\in\{\alpha_i,\beta_i\}$ for each $i\in\{1,\cdots,k\}$ and  $\gamma'_j\in\{\alpha'_j,\beta'_j\}$ for each $j\in\{1,\cdots,l\}$, which is a contradiction to the hypothesis that the underlying quiver of $\Lambda$ is acyclic.
\end{proof}

\begin{exmp}\label{exmp: strict poset}
Continuing from Example \ref{exmp: covering quiver}, Figure \ref{fig:hasse diagram} shows the Hasse diagram for the strict partial order $(\overline{\C Q}_0(\Gamma''),\prec)$.
\end{exmp}

\begin{figure}[h]
    \centering
    \begin{tikzcd}[row sep=small,column sep=small]
{(v_3,1)}                                         &                                                      &                                                      & {(v_3,-1)}                                         \\
{(v_1,1)} \arrow[u, no head]                      &                                                      &                                                      & {(v_1,-1)} \arrow[u, no head]                      \\
                                                  & {(v_2,1)} \arrow[lu, no head] \arrow[rruu, no head]  & {(v_2,-1)} \arrow[lluu, no head] \arrow[ru, no head] &                                                    \\
                                                  & {(v_6,-1)} \arrow[ld, no head] \arrow[rrdd, no head] & {(v_6,1)} \arrow[lldd, no head] \arrow[rd, no head]  &                                                    \\
{(v_4,1)} \arrow[d, no head] \arrow[uuu, no head] &                                                      &                                                      & {(v_4,-1)} \arrow[d, no head] \arrow[uuu, no head] \\
{(v_5,1)}                                         &                                                      &                                                      & {(v_5,-1)}                                        
\end{tikzcd}
    \caption{Hasse Diagram of $(\overline{\C Q}_0(\Gamma''),\prec)$}
    \label{fig:hasse diagram}
\end{figure}

A similar argument as in the proof of Proposition \ref{prop: poset from cov quiv} gives the following result.

\begin{cor}\label{cor: empty trace always exists}
Let $\Lambda$ be a string algebra whose underlying quiver is acyclic. Let $(\cq0,\prec)$ be the strict partial order as obtained above. Then for every $(v,i)\in\cq0$, we have that $(v,i)$ and $(v,-i)$ are incomparable.
\end{cor}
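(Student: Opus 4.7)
The plan is to argue by contradiction, essentially recycling the structure of the proof of Proposition \ref{prop: poset from cov quiv}. Suppose, for some $(v,i)\in\cq{0}$, the elements $(v,i)$ and $(v,-i)$ are comparable under $\prec$; without loss of generality assume $(v,i)\prec(v,-i)$. By the definition of $\prec$ as the transitive closure of the basic relation, there exist $k\geq 1$ and elements $(v_0,i_0):=(v,-i)$, $(v_1,i_1),\ldots,(v_{k-1},i_{k-1})$, $(v_k,i_k):=(v,i)$ such that for each $j\in\{0,\ldots,k-1\}$, we have the basic relation $(v_j,i_j)\succ(v_{j+1},i_{j+1})$.

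Each basic step is witnessed, by Definition \ref{defn: cov quiv} of the covering quiver, by some arrow $\gamma_{j+1}\in Q_1$: either $\gamma_{j+1}$ runs from $(v_j,i_j)$ to $(v_{j+1},i_{j+1})$ in $\cq{}$ (so $s(\gamma_{j+1})=v_j$ and $t(\gamma_{j+1})=v_{j+1}$), or $\gamma_{j+1}^{-1}$ runs from $(v_{j+1},i_{j+1})$ to $(v_j,i_j)$ in $\cq{}$ (so again $s(\gamma_{j+1})=v_j$ and $t(\gamma_{j+1})=v_{j+1}$, since inverting the inverse syllable flips both source and target in $\cq{}$). In both cases, the projection to $Q_0$ yields an arrow $v_j\to v_{j+1}$ in $Q$.

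Concatenating these $k$ arrows produces a directed path $v=v_0\to v_1\to\cdots\to v_k=v$ in the underlying quiver of $\Lambda$. Since $(v,i)\neq(v,-i)$, we have $k\geq 1$, so this is a genuine (non-trivial) directed cycle in $Q$, contradicting the hypothesis that the underlying quiver of $\Lambda$ is acyclic. Hence $(v,i)$ and $(v,-i)$ are incomparable.

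There is no real obstacle here; the only point to watch is that the direction of the arrow in $Q$ extracted from a basic $\prec$-step is the same whether the witnessing covering-quiver arrow is direct or inverse, and that the trivial chain (length $0$) is ruled out by $i\neq -i$. This is a direct consequence of the symmetric treatment of $\alpha$ and $\alpha^{-1}$ in Definition \ref{defn: cov quiv}, exactly as already used in the proof of Proposition \ref{prop: poset from cov quiv}.
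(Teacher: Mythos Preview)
Your argument is correct and is precisely the ``similar argument as in the proof of Proposition~\ref{prop: poset from cov quiv}'' that the paper invokes: you unwind a putative $\prec$-chain between $(v,i)$ and $(v,-i)$ into a directed cycle in $Q$ via the observation that each basic $\prec$-step projects to an arrow of $Q$ in the same direction regardless of whether the witnessing syllable is direct or inverse. The only cosmetic point is that you labeled $(v_0,i_0)=(v,-i)$ while writing the resulting cycle as $v=v_0\to\cdots\to v_k=v$, which is fine since $v_0=v_k=v$ either way.
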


Now that we have obtained a partially ordered set from the covering quiver, we have to trace appropriate paths in the partially ordered set. In view of Proposition \ref{prop:string at most one sequence}, given $(v,i),(v',i')\in\cq{0}$, there exists at most one string $\f z=\alpha_k\cdots\alpha_1$ with $\delta(\f z)\neq0$ such that $1_{(v',i')}\f z1_{(v,i)}$ is a string. Define 
\begin{equation*}
T_{((v,i),(v',i'))}:=
\begin{cases}
\langle(v,i),(s(\alpha_2),-\varsigma(\alpha_2)),\cdots,(s(\alpha_k),-\varsigma(\alpha_k)),(v',i')\rangle&\text{if such a string $\f z$ exists;}\\
\langle\rangle &\text{otherwise.}
\end{cases}.
\end{equation*}

Let $\C T$ be the collection of all such sequences $T_{((v,i),(v',i'))}$. The set $\C T$ contains the empty sequence, thanks to Corollary \ref{cor: empty trace always exists}, and Proposition \ref{prop:string at most one sequence} ensures that it is closed under subsequences. Also define $\upmu:A\to A$ by $\upmu(v,i):=(v,-i)$, where $(v,i)\in A$.

\begin{exmp}\label{exmp: tracings}
Continuing from Example \ref{exmp: strict poset}, the following are the maximal sequences in $\C T$:
$\\\{\langle(v_1,-1),(v_2,-1)\rangle,\langle(v_2,-1),(v_3,1)\rangle,\langle(v_3,1),(v_1,1),(v_4,1)\rangle,\langle(v_1,1),(v_4,1),(v_5,1)\rangle,\\\langle(v_5,1),(v_6,1)\rangle,\langle(v_6,1),(v_4,-1)\rangle,\langle(v_4,-1),(v_1,-1),(v_3,-1)\rangle,\langle(v_3,-1),(v_2,1)\rangle,(v_2,1),(v_1,1)\rangle\}.$
\end{exmp}

The next result which summarizes the discussion in this section provides abundant examples of finite traced posets.

\begin{prop}\label{prop: traced poset obtained from covering quiver}
The quadruple $(\cq{0},\prec,\C T,\upmu)$ is a finite traced partial ordered set.
\end{prop}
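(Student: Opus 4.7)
The plan is to verify, one by one, the six axioms of Definition \ref{defn: traced poset} for the quadruple $(\cq0,\prec,\C T,\upmu)$. That $(\cq0,\prec)$ is a strict partial order is already the content of Proposition \ref{prop: poset from cov quiv}, so only the compatibility with the traces $\C T$ and the involution $\upmu$ remains to be checked.

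For axioms (1) and (1'), I would observe that any cover $n\prec m$ in $(\cq0,\prec)$ is realised by a single arrow of $\overline{\C Q}$: either some $\alpha\in Q_1$ with $\ct(\alpha)=n$ and $\cs(\alpha)=m$, or some $\beta^{-1}\in Q_1^{-1}$ with $\cs(\beta^{-1})=n$ and $\ct(\beta^{-1})=m$. The $\varepsilon$-axiom on arrows of $Q_1$ ending at a given vertex of $Q$, together with Definition \ref{defn:string alg}\ref{indegoutdeg}, then shows that for fixed $n=(v,i)$ at most one $\alpha\in Q_1$ and at most one $\beta\in Q_1$ can be so compatible, giving at most two distinct covers of $n$; axiom (1') is symmetric. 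Axiom (2) is immediate from Proposition \ref{prop:string at most one sequence}: for any $(v,i),(v',i')\in\cq0$, at most one string $\f z$ with $\delta(\f z)\neq 0$ can satisfy $1_{(v',i')}\f z 1_{(v,i)}\in\s{St}(\Lambda)$, so $T_{((v,i),(v',i'))}$ is uniquely determined by $\f z$. For axioms (3) and (3'), writing $\f z=\alpha_k\cdots\alpha_1$, the consecutive entries of the associated trace are exactly $\cs(\alpha_j)$ and $\ct(\alpha_j)$, joined by the single arrow $\alpha_j$ in $\overline{\C Q}$ and thus forming a cover; the uniform sign of $\delta(\f z)$ forces the sequence to be strictly ascending or strictly descending. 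Axioms (4) and (4') reduce to the case $|\f z|=1$: if $T_{(m,n)}=\langle m,n\rangle$ with $m\prec n$ then $\f z$ must be a single inverse syllable $\beta^{-1}$ with $\cs(\beta^{-1})=m$, and the string-algebra condition provides at most one such $\beta$; the dual gives (4').

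For axiom (5), the map $\upmu:(v,i)\mapsto(v,-i)$ is evidently an involution with no fixed points. To check it is an order isomorphism it suffices to verify that it preserves covers, and the direct computation
\[
\cs(\alpha^{-1})=(t(\alpha),-\varepsilon(\alpha))=\upmu(\ct(\alpha)),\qquad \ct(\alpha^{-1})=(s(\alpha),\varsigma(\alpha))=\upmu(\cs(\alpha))
\]
shows that an arrow $\alpha\in Q_1$ witnessing a cover $(v_1,i_1)\prec(v_2,i_2)$ corresponds to the arrow $\alpha^{-1}\in Q_1^{-1}$ witnessing $\upmu(v_1,i_1)\prec\upmu(v_2,i_2)$, and the case of an inverse witness is symmetric. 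Axiom (6) then follows because the inverse of a string is a string: if $1_m\f z 1_n\in\s{St}(\Lambda)$ then $1_{\upmu(n)}\f z^{-1} 1_{\upmu(m)}\in\s{St}(\Lambda)$, and writing $\f z=\alpha_k\cdots\alpha_1$ the sequence of vertices $\cs(\alpha_1),\cs(\alpha_2),\ldots,\cs(\alpha_k),\ct(\alpha_k)$ visited by $\f z$ is carried, after inversion, to $\upmu(\ct(\alpha_k)),\upmu(\ct(\alpha_{k-1})),\ldots,\upmu(\cs(\alpha_1))$, which is precisely the reversed $\upmu$-image required.

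The only genuinely delicate part of the argument is the sign bookkeeping for $\varsigma$ and $\varepsilon$ in axiom (5) and the matching of trace entries in axiom (6); beyond this, every verification is a routine consequence of Propositions \ref{prop:string at most one sequence} and \ref{prop: poset from cov quiv} together with the defining axioms of a string algebra.
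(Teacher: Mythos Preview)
Your proof is correct and follows essentially the same approach as the paper's own proof: both verify the axioms of Definition~\ref{defn: traced poset} one by one, reducing each to the corresponding structural feature of the string algebra (Conditions~\ref{indegoutdeg} and~\ref{at most one arrow} of Definition~\ref{defn:string alg}, Proposition~\ref{prop:string at most one sequence}, and the behaviour of string inversion). You supply somewhat more explicit detail than the paper---particularly the cover-counting in axiom~(1), the sign computation showing $\upmu$ is an order isomorphism in axiom~(5), and the syllable-by-syllable matching in axiom~(6)---but the overall structure is identical.
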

\begin{proof}
We already have that $(\cq{0},\prec)$ is a finite strict partial ordered set, thanks to Proposition \ref{prop: poset from cov quiv}, and that $\C T$ is a collection of strictly monotone sequences closed under (possibly empty) subsequences. Now we verify the remaining conditions of Definition \ref{defn: traced poset}.

Properties \ref{at most two elements cover} and \ref{at most two elements cover}$'$ follow from Condition \ref{indegoutdeg} of Definition \ref{defn:string alg}. Property \ref{at most one sequence} follows from Proposition \ref{prop:string at most one sequence}. The definition of $T_{((v,i),(v',i'))}$ forces Properties \ref{successor is a cover} and \ref{successor is a cover}$'$. Properties \ref{at most one essential neighbour} and \ref{at most one essential neighbour}$'$ follow from Condition \ref{at most one arrow} of Definition \ref{defn:string alg}. It is clear from the definition of $\upmu$ that $\upmu$ is an involution, isomorphism, and has no fixed points. For Property \ref{T and mu}, note that if $1_{(v',i')}\f u1_{(v,i)}$ is a string, where $(v,i),(v',i')\in \cq{0}$, then $1_{(v,-i)}\f u^{-1}1_{(v',-i')}$ is a string.
\end{proof}

\section{Recovering a string algebra from its finite traced poset}\label{sec:construct string alg}
After obtaining a finite traced poset from a string algebra whose underlying quiver is acyclic, a natural question is how much information we lose in the process. In other words, given a finite traced poset, how much of the string algebra can be recovered? It turns out that we do not lose any information in this process. Given a finite traced poset $\s A$, we can construct a string algebra $\Lambda$ such that the traced poset constructed from $\Lambda$ is isomorphic to $\s A$. In this section, we describe the process of constructing a string algebra from a finite traced poset. We omit the routine verification that the traced poset constructed from $\Lambda$ is isomorphic to $\s A$.

Let $\s A:=(A,<,T,\mu)$ be a finite traced partial ordered set. Construct a quiver $Q=(Q_0,Q_1,s,t)$, where $$Q_0:=\{\{n,\mu(n)\}\mid n\in\s A\}.$$ Consider $\{n,\mu(n)\},\{m,\mu(m)\}\in Q_0$ and fix $n'\in\{n,\mu(n)\}$. For each instance of the following conditions, there exists an arrow $\alpha\in Q_1$ with $s(\alpha)=\{n,\mu(n)\}$ and $t(\alpha)=\{m,\mu(m)\}$:
\begin{enumerate}[label=$(\roman*)$]
    \item $n'$ covers $m$ and $T_{(n',m)}\neq\langle\rangle$,
    \item $n'$ covers $\mu(m)$ and $T_{(n',\mu(m))}\neq\langle\rangle$,
    \item $n'$ covers $m$ and $T_{(m,n')}\neq\langle\rangle$,
    \item $n'$ covers $\mu(m)$ and $T_{(\mu(m),n')}\neq\langle\rangle$.
\end{enumerate}

\begin{rmk}\label{rmk: atmost two out of four (construction of str alg from poset)}
Axioms \ref{at most one essential neighbour} and \ref{at most one essential neighbour}$'$ of Definition \ref{defn: traced poset} imply that at most one condition from $(i)$ and $(ii)$ and at most one condition from $(iii)$ and $(iv)$ can be satisfied.
\end{rmk}

By our construction of the quiver $Q$, if there is a path $$v_1\to v_2\to\cdots\to v_k$$ then there are $n_i\in v_i$ in $Q$ such that $$n_1> n_2>\cdots>n_k.$$ Let $\rho$ contain such a path if one of the following holds:
\begin{itemize}
    \item $T_{(n_1,n_k)}=\langle\rangle$ and $T_{(n_i,n_j)}\neq\langle\rangle$ for $1\leq i<j\leq k$ and $(i,j)\neq(1,k)$,

    \item $T_{(n_k,n_1)}=\langle\rangle$ and $T_{(n_i,n_j)}\neq\langle\rangle$ for $1\leq j<i\leq k$ and $(i,j)\neq(k,1)$.
\end{itemize}

\begin{prop}
The pair $(Q,\rho)$ constructed above is a presentation of a string algebra with acyclic underlying quiver.
\end{prop}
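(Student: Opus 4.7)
The plan is to verify, in order, the acyclicity of the underlying quiver $Q$ and then the three defining properties (I), (II), (III) of a string algebra from Definition \ref{defn:string alg}.

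For \emph{acyclicity}, suppose for contradiction that $v_1 \to v_2 \to \cdots \to v_k \to v_1$ is a directed cycle. By construction, the $i$-th arrow witnesses a cover $n'_i > m'_i$ with $n'_i \in v_i$ and $m'_i \in v_{i+1}$. Since $m'_i$ and $n'_{i+1}$ both lie in $v_{i+1} = \{n_{i+1}, \mu(n_{i+1})\}$, we may write $n'_{i+1} = \mu^{\epsilon_i}(m'_i)$ for some $\epsilon_i \in \{0,1\}$. Setting $t_i := \mu^{\epsilon_1 + \cdots + \epsilon_{i-1}}(n'_i)$ and applying the appropriate power of the order-isomorphism $\mu$ to each cover $n'_i > m'_i$, one obtains $t_1 > t_2 > \cdots > t_k > \mu^{E}(t_1)$, where $E := \sum_i \epsilon_i \pmod 2$. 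The case $E = 0$ gives $t_1 > t_1$, contradicting antisymmetry; the case $E = 1$ gives $t_1 > \mu(t_1)$, which contradicts the fact that $n$ and $\mu(n)$ are always incomparable in a traced poset (if $n > \mu(n)$ then applying $\mu$ would yield $\mu(n) > n$).

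For \emph{condition (I)}, I would fix a vertex $v = \{n, \mu(n)\}$ and count the arrows starting at $v$ by classifying them according to trace direction. After fixing a representative source side $n' \in \{n, \mu(n)\}$ (which simultaneously pins down a valid $\varsigma$-map; the equivalence between the four conditions of the construction for $n'$ versus $\mu(n')$ induced by the $T$-$\mu$ compatibility then avoids double-counting), the upward-trace arrows (arising from conditions (iii) or (iv)) are bounded by one using the ``at most one essential neighbour'' axiom applied to $n'$, while the downward-trace arrows (from (i) or (ii)) are bounded by one via the same axiom applied to $\mu(n')$ combined with the $T$-$\mu$ compatibility. A symmetric argument, using the dual ``at most one essential neighbour'' axiom, bounds arrows ending at $v$ by two.

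For \emph{condition (II)}, given $\alpha \in Q_1$, I would show that at most one $\beta \in Q_1$ satisfies $\alpha\beta \notin \rho$. The definition of $\rho$ encodes precisely those length-two paths whose composite trace across both arrows fails to exist while both component traces do exist (in one of the two trace directions). Hence a non-relation composition $\alpha\beta$ corresponds to an extension of the trace associated to $\alpha$ in $T$, and the uniqueness of trace sequences between any two poset elements forces the extension to be unique on each side. For \emph{condition (III)}, any path avoiding every subpath in $\rho$ embeds into a single maximal trace in $T$; since $A$ is finite, such traces have length at most $|A|$, which supplies the admissibility constant $M$.

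The main technical obstacle is condition (II): disentangling which length-two paths lie in $\rho$ from those that correspond to genuine trace extensions demands a careful case analysis across the four types of arrows introduced by conditions (i)--(iv) and the two ``directions'' appearing in the definition of $\rho$. One must verify that after each extension of $\alpha$, the axioms of the traced poset enforce uniqueness on each side without inadvertently excluding or duplicating any arrow coming from the other condition types.
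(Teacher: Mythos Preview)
Your plan is sound and mirrors the paper's own (very terse) proof, which simply cites Remark~\ref{rmk: atmost two out of four (construction of str alg from poset)} for Condition~\ref{indegoutdeg}, axioms~\ref{successor is a cover}, \ref{successor is a cover}$'$, \ref{at most one essential neighbour}, \ref{at most one essential neighbour}$'$ for Condition~\ref{at most one arrow}, and finiteness of the partial order for acyclicity and Condition~\ref{admissible}. Your acyclicity argument is in fact more careful than the paper's: the paper merely invokes the pre-proposition claim that any path lifts to a strictly decreasing chain, whereas you explicitly handle the possibility that the chain returns to $\mu(t_1)$ rather than $t_1$.

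Two points deserve adjustment. First, for Condition~\ref{at most one arrow} your sketch leans on axiom~\ref{at most one sequence} (``uniqueness of trace sequences between any two poset elements''). That axiom fixes \emph{both} endpoints, but in Condition~\ref{at most one arrow} only one endpoint of the extension is fixed while the other varies with $\beta$; uniqueness of the trace between two fixed elements does not by itself bound the number of possible $\beta$'s. The relevant axioms are \ref{at most one essential neighbour} and \ref{at most one essential neighbour}$'$ (at most one essential neighbour above/below), together with \ref{successor is a cover}, \ref{successor is a cover}$'$ to ensure extensions proceed along covers---exactly what the paper cites. Your final paragraph already anticipates a case analysis here, so this is a matter of invoking the correct axioms rather than a structural problem.

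Second, your Condition~\ref{admissible} argument is more elaborate than necessary: once acyclicity is established, finiteness of $Q_0$ immediately bounds path lengths by $|Q_0|-1$, without needing to embed paths into traces of $T$. (Also note that $\rho$ as constructed contains paths of arbitrary length, not only length two; this does not affect the verification of Condition~\ref{at most one arrow}, but your phrasing ``encodes precisely those length-two paths'' should be softened.)
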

\begin{proof}
Remark \ref{rmk: atmost two out of four (construction of str alg from poset)} ensures Condition \ref{indegoutdeg} of Definition \ref{defn:string alg}. Moreover, Conditions \ref{successor is a cover}, \ref{successor is a cover}$'$, \ref{at most one essential neighbour} and \ref{at most one essential neighbour}$'$ of Definition \ref{defn: traced poset} together ensure Condition \ref{at most one arrow}. Finally, since $(A,<)$ is a finite partial order, there is no directed cycle in $Q$, thereby proving Condition \ref{admissible} as well.
\end{proof}

\section{Band bricks over string algebras whose underlying quiver is acyclic} \label{sec: main}
Now we are in a position to characterise band bricks over string algebras with acyclic underlying quiver in terms of weakly perfectly clustering pairs of crowns over a finite traced poset. The initial part of this section establishes one-to-one correspondences between certain sets of words and strings, which provide us with a way of viewing bands as certain words over a finite traced poset and vice versa. The latter part of this section is devoted to the proof of Theorem \ref{thm: main}.

Given a string algebra $\Lambda$ with acyclic underlying quiver, let $\s A:=(A,<,T,\mu):=(\cq{0},\prec,\C T,\upmu)$ as obtained in \S~\ref{sec:obtain traced poset}. Recall the definitions of a valid zigzag (Definition \ref{defn: valid zigzag}) and a valid crown (Definition \ref{defn: valid crown2}).

Define a map $\C{W_{\s{St}}}:\s{St}(\Lambda)\xrightarrow{}\ZZval$ as follows. If $\f x$ is a string with $|\f x|=0$ then set $\wst{\f x}:=p\in A$ such that $1_p=\f x$. On the other hand, given a string $\f x$ with $|\f x|>0$, let its standard partition be $\f x=\f x_k\cdots\f x_1$. By Remark \ref{rem:alt string repn}, there exist $n_1,\cdots,n_{k+1}\in \s A$ such that $\f x=1_{n_{k+1}}\f x_k1_{n_k}\cdots1_{n_2}\f x_11_{n_1}$. Define $\wst{\f x}:=n_1n_2\cdots n_{k+1}$. Since $\delta(\f x_i)=-\delta(\f x_{i+1})$ for $i\in\{1,\cdots,k-1\}$, it follows that $\wst{\f x}$ is indeed a zigzag. Moreover, for each $i\in\{1,\cdots,k\}$, since $1_{n_{i+1}}\f x_{i+1}1_{n_i}\in \s{St}(\Lambda)$ with $\delta(\f x_{i+1})\neq0$, we conclude that $T_{(n_i,n_{i+1})}\neq\emptyset$ and hence $\wst{\f x}\in\ZZval$.

Note that if $\f b$ is a special cyclic permutation of a band then the first and last letter of $\C{W_{\s{St}}}(\f b)$ are identical. This induces a map $\C W_{\s{Ba}}:\cycsp\to\pcrownsval$, where $\C{W_{\s{Ba}}}(\f b)$ is defined as the word obtained from $\C{W_{\s{St}}}(\f b)$ by omitting the last letter; the primitivity of $\f b$ forces the primitivity of $\wba{\f b}$.

Now we define maps in the other direction. Given a valid zigzag $\s w=n_1\cdots n_k$, we know from the construction of $T=\C T$ that there exists a unique string $\f x_i$ with $\delta(\f x_i)\neq 0$ such that $1_{n_{i+1}}\f x_i1_{n_i}$ is a string for each $i\in\{1,\cdots,k\}$. Define $\overline{\C W}_\s{St}:\ZZval\to\s{St}(\Lambda)$ by $\overline{\C W}(\s w):=\f x_{k-1}\cdots\f x_1$. Similarly, if $\s w=n_1\cdots n_k$ is a primitive crown then $\s w':=n_1\cdots n_kn_1$ is a valid zigzag. Define $\overline{\C W}_{\s{Ba}}:\pcrownsval\to\s{Cyc}^{\mathrm{Sp}}(\Lambda)$ by $\overline{\C W}_{\s{Ba}}(\s w):=\overline{\C W}_{\s{St}}(n_1\cdots n_kn_1)$.

The next result summarizes all the one-to-one correspondences.

\begin{prop}\label{prop:bijections}
Let $\Lambda$ be a string algebra whose underlying quiver is acyclic. Then we have the following one-to-one correspondences.

\hspace{2cm}
\begin{tikzcd}
\s{St}(\Lambda) \arrow[r, "\C W_\s{St}", bend left, shift left] & \ZZval \arrow[l, "\overline{\C W}_\s{St}", bend left, shift left]
\end{tikzcd}
\hfill
\begin{tikzcd}
\s{Cyc}^{\mathrm{Sp}}(\Lambda) \arrow[r, "\C W_\s{Ba}", bend left, shift left] & \pcrownsval \arrow[l, "\overline{\C W}_\s{Ba}", bend left, shift left]
\end{tikzcd}
\hspace{2cm}

\hspace{1.6cm}
\begin{tikzcd}
\cyc\arrow[r, "\C W_\s{Ba}", bend left, shift left] & \crownsval \arrow[l, "\overline{\C W}_\s{Ba}", bend left, shift left]
\end{tikzcd}
\hfill
\begin{tikzcd}
\s{Ba}(\Lambda) \arrow[r, "\C W_\s{Ba}", bend left, shift left] & \pcrownsspec \arrow[l, "\overline{\C W}_\s{Ba}", bend left, shift left]
\end{tikzcd}
\hspace{1.9cm}

\end{prop}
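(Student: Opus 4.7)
The plan is to establish the four bijections in a cascading manner: first verify that $\wst{-}$ and $\winvst{-}$ are mutually inverse, and then deduce the three remaining correspondences by restricting to the appropriate subsets and checking that the characterising properties are preserved in both directions. The central technical tool throughout is Proposition \ref{prop:string at most one sequence}, which guarantees uniqueness of a string of definite direction between two prescribed signed endpoints, together with the definitions of $T$ and $\C T$ from \S~\ref{sec:obtain traced poset}.

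First, for a positive-length string $\f x$ with standard partition $\f x_k \cdots \f x_1$, the expanded form from Remark \ref{rem:alt string repn} reads $1_{n_{k+1}} \f x_k 1_{n_k} \cdots \f x_1 1_{n_1}$ with $n_i \in \cq{0}$; the condition $\delta(\f x_{i+1}) = -\delta(\f x_i) \neq 0$ makes $n_1 \cdots n_{k+1}$ a zigzag, and the existence of each triple $1_{n_{i+1}} \f x_i 1_{n_i}$ as a string with $\delta(\f x_i) \neq 0$ forces $T_{(n_i,n_{i+1})} \neq \langle \rangle$, so $\wst{\f x} \in \ZZval$. Conversely, given $\s w = n_1 \cdots n_k \in \ZZval$, the non-emptiness of each $T_{(n_i,n_{i+1})}$ together with Proposition \ref{prop:string at most one sequence} yields a unique $\f x_i$ with $\delta(\f x_i) \neq 0$ such that $1_{n_{i+1}} \f x_i 1_{n_i}$ is a string. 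The alternating signs $\delta(\f x_{i+1}) = -\delta(\f x_i)$, combined with the axioms on $\varsigma$ and $\varepsilon$, guarantee that the last syllable of $\f x_i$ and the first syllable of $\f x_{i+1}$ are not formal inverses of each other; relations never span the junction because they involve syllables of a single direction. Hence $\f x_{k-1} \cdots \f x_1$ is a string whose standard partition is exactly the collection $\{\f x_i\}$, and uniqueness gives $\winvst{-} \circ \wst{-} = \mathrm{id}$ and $\wst{-} \circ \winvst{-} = \mathrm{id}$.

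For the second bijection, a special cyclic permutation $\f b \in \cycsp$ satisfies $s(\f b) = t(\f b)$ and has a sign change across the cyclic boundary, so the first and last letters of $\wst{\f b}$ encode the same signed vertex; dropping the last letter produces a word whose every cyclic rotation is again a valid zigzag, because $\f b^m$ is a string for every $m \geq 1$ and each rotation arises as the $\wst{}$-image of some translate of $\f b$. Hence $\wba{\f b} \in \crownsval$. In the reverse direction, appending $n_1$ to a valid crown $\s w = n_1\cdots n_k$ yields a valid zigzag to which $\winvst{-}$ applies, producing a cyclic string with a direction change across the cut, i.e., an element of $\cycsp$. Primitivity is transparently preserved in both directions, because a non-trivial periodicity of the crown corresponds to a non-trivial periodicity in the standard partition of the string. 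The third bijection $\cyc \leftrightarrow \crownsval$ is the same argument with the primitivity constraint dropped, accommodating positive powers.

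Finally, for $\s{Ba}(\Lambda) \leftrightarrow \pcrownsspec$, a band $\f b$ begins with an inverse syllable and ends with a direct one, so in the standard partition $\f b = \f x_k \cdots \f x_1$ one has $\delta(\f x_1) = 1$. Under the encoding $\C{Q}_0(\Lambda) \to \cq{0}$, an inverse $\f x_1$ exactly translates to $n_1 \prec n_2$, which is the defining condition of a special crown; the converse is analogous. The step I expect to require most care is verifying that the concatenations at junctions produced by $\winvst{-}$ are genuine strings rather than merely syntactically compatible sequences; this reduces, via the properties of $\varsigma$ and $\varepsilon$ encoded in the signed vertices and the acyclicity of the underlying quiver via Proposition \ref{prop:string at most one sequence}, to conditions already verified when showing $\winvst{-} \circ \wst{-} = \mathrm{id}$, and so deserves a careful but not difficult treatment in the full write-up.
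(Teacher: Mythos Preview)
Your proposal is correct. The paper states Proposition~\ref{prop:bijections} without proof, treating it as a direct summary of the maps $\wst{-}$, $\winvst{-}$, $\wba{-}$, $\winvba{-}$ constructed in the paragraphs immediately preceding it; your argument spells out exactly the verifications those definitions leave implicit, along the natural lines (standard partition together with Proposition~\ref{prop:string at most one sequence} for the first bijection, then restriction and the primitivity/special conditions for the remaining three).
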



Now, given bands $\f b_1$ and $\f b_2$, we give a criterion for the existence of a non-trivial morphism from $B(\f b_1,1,\lambda_1)$ to $B(\f b_2,1,\lambda_2)$, where $\lambda_1,\lambda_2\in\C K^\times$.

\begin{lem}\label{lem: nontrivial morphism between bands criterion in terms of WPCC}
Given bands $\f b_1$ and $\f b_2$, $\lambda_1,\lambda_2\in\C K^\times$, there does not exist a non-trivial morphism from $B(\f b_1,1,\lambda_1)$ to $B(\f b_2,1,\lambda_2)$ if and only if the pairs $(\wba{\f b_1},\wba{\f b_2})$ and $(\wba{\f b_1},\wba{\f b_2^{-1}})$ are weakly perfectly clustering.
\end{lem}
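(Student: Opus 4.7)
My proof strategy starts from Corollary \ref{cor:existence of morphisms between band modules}, which says that the absence of a non-trivial morphism $B(\f b_1,1,\lambda_1)\to B(\f b_2,1,\lambda_2)$ is equivalent to the absence of any finite string $\f u$ that is simultaneously a factor substring of $\INF{\f b_1}$ and an image substring of either $\INF{\f b_2}$ or $\INF{(\f b_2^{-1})}$. These two alternatives correspond exactly to the two pairs $(\wba{\f b_1},\wba{\f b_2})$ and $(\wba{\f b_1},\wba{\f b_2^{-1}})$ in the statement, so by this symmetry it suffices to prove that no common factor-of-$\INF{\f b_1}$, image-of-$\INF{\f b_2}$ finite string exists if and only if the pair $(\wba{\f b_1},\wba{\f b_2})$ is weakly perfectly clustering. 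I would argue this by translating each such $\f u$ into paired windows of the bi-infinite cyclic zigzags $\cdots\wba{\f b_i}\wba{\f b_i}\cdots$ associated to $\INF{\f b_i}$ for $i=1,2$.

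The translation proceeds by expanding $\f u$ with zero-length endpoints as in Remark \ref{rem:alt string repn} and reading off $\wst{\f u}$, which appears as a contiguous window of the ambient cyclic zigzag on each side. By Remark \ref{rem: factor image by marked string} and the $\vartheta_l,\vartheta_r$ maps, the factor-substring condition forces the ambient zigzag letter beyond each window boundary to be strictly greater than (respectively less than) the corresponding boundary letter when the outgoing syllable is direct (resp.\ inverse), while the image condition imposes the opposite inequalities. I then split on the length of $\wst{\f u}$. When $|\wst{\f u}|\geq 3$, stripping one letter from each end leaves a common interior zigzag $\s z$ of positive length appearing in cyclic permutations $n'\s zm'\s z'$ of a power of $\wba{\f b_1}$ and $n''\s zm''\s z''$ of a power of $\wba{\f b_2}$; combining factor-on-the-$\f b_1$-side with image-on-the-$\f b_2$-side yields exactly $n'<n''$ and $m'<m''$, producing a violation of Condition \ref{axiom1 of WPCC2} of Definition \ref{defn: weak perf clus2}. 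When $|\wst{\f u}|\leq 2$, the substring $\f u$ lies in a single maximal sign block of each infinite string, running from $n'$ to $m'$ on one side and from $n''$ to $m''$ on the other; the factor/image dictionary again forces $n'<n''$ and $m'<m''$ and, since the sign pattern of $\f u$ must agree on both sides, $n'<m'$ iff $n''<m''$. The existence of a common underlying string traversing these two blocks translates via the very construction of $T$ in \S~\ref{sec:obtain traced poset} to $T_{(n',m')}\cap T_{(n'',m'')}\neq\emptyset$, which is precisely the violation of Condition \ref{axiom2 of WPCC2}.

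Finally, the bounded exponents $2\cdot|\wba{\f b_2}|$ and $2\cdot|\wba{\f b_1}|$ in Definition \ref{defn: weak perf clus2} suffice because if a violating pair of cyclic-permutation windows of zigzag length substantially exceeding $|\wba{\f b_1}|\cdot|\wba{\f b_2}|$ existed, then both $|\wba{\f b_1}|$ and $|\wba{\f b_2}|$ would be periods of an appropriate translate of the shared window, so by Proposition \ref{prop: period of a string} (KMP) their greatest common divisor would also be a period, contradicting primitivity of at least one of the bands unless that gcd already equals $|\wba{\f b_1}|$ or $|\wba{\f b_2}|$, in which case a shorter configuration suffices. The principal technical obstacle is the careful boundary bookkeeping: one must verify the factor/image dictionary in each of the cases $|\wst{\f u}|=1,2,\geq 3$, ensure that the extracted inequalities match exactly the two axioms of Definition \ref{defn: weak perf clus2} with no extraneous or missing conditions, and handle the zero-length endpoint subtleties arising in the standard partition of Remark \ref{rem:alt string repn}, producing a bijection rather than a mere implication between common factor-image substrings and configurations violating weak perfect clustering.
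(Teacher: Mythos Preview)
Your proposal is essentially the paper's own approach: reduce via Corollary~\ref{cor:existence of morphisms between band modules}, handle the $\f b_2$ versus $\f b_2^{-1}$ alternative by symmetry, translate a witnessing $\f u$ through $\wst{-}$ into the cyclic zigzag words, split into cases according to the zigzag shape of $\f u$, and invoke Proposition~\ref{prop: period of a string} to bound the exponents. Your case split by $|\wst{\f u}|\in\{1,2,\geq 3\}$ is equivalent to the paper's split by $|\f u|=0$, $\delta(\f u)\neq 0$, $\delta(\f u)=0$; the paper routes the zero-length case $|\wst{\f u}|=1$ through Condition~\ref{axiom1 of WPCC2} (with $\s z$ the single peak/valley letter) rather than Condition~\ref{axiom2 of WPCC2}, but your routing also works once you choose one of the two adjacent sign blocks at that vertex.

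Two places where your sketch is thinner than the paper's argument and would need filling in: first, in the $|\wst{\f u}|\geq 3$ case, the flanking letters $n',m',n'',m''$ are not obtained merely by ``stripping one letter from each end'' of $\wst{\f u}$---on exactly one of the two sides the endpoint of $\wst{\f u}$ survives unchanged while on the other it is replaced by the endpoint of the extended sign block, depending on the sign of the first/last syllable of $\f u$ (this is the four-way subcase analysis the paper alludes to). Second, the reverse direction is not a bijection and needs its own construction: from a Condition~\ref{axiom1 of WPCC2} violation the paper builds $\f u:=\winvst{\overline n\,\s z\,\overline m}$ with a sign-dependent choice of $\overline n,\overline m$, and from a Condition~\ref{axiom2 of WPCC2} violation it uses axiom~\ref{at most one essential neighbour} of Definition~\ref{defn: traced poset} to identify $\min(T_{(n',m')}\cap T_{(n'',m'')})=n''$ and $\max=m'$, whence $\f u:=\winvst{n''m'}$. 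You correctly flag this bookkeeping as the principal obstacle, but you should be aware that the traced-poset axiom is genuinely used here.
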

\begin{proof}
$(\Longleftarrow)$ Suppose there exists a non-trivial morphism $B(\f b_1,1,\lambda_1)\to B(\f b_2,1,\lambda_2)$. By Corollary \ref{cor:existence of morphisms between band modules}, without loss, there exists a finite string $\f u$ that occurs as a factor substring of $\INF{\f b_1}$ and an image substring of $\INF{\f b_2}$.

We claim that we can choose $\f u$ such that either $|\f u|\leq|\f b_1|$ or $|\f u|\leq|\f b_2|$. If possible, suppose $|\f u|>|\f b_1|$ and $|\f u|>|\f b_2|$. Then there exist cyclic permutations $\f b_1'$ and $\f b_2'$ of $\f b_1$ and $\f b_2$ respectively such that $\f b_1'\sqsubset_l\f u$ and $\f b_2'\sqsubset_l\f u$. Hence either $\f b'_1=\f b'_2$ or $|\f b'_1|\neq|\f b'_2|$. The former case contradicts that $\f u$ is both a factor and image substring of $\INF{\f b}$. In the latter case, note that both $|\f b_1|$ and $|\f b_2|$ are periods (Definition \ref{defn: period of a string}) of $\f u$, which implies, by Proposition \ref{prop: period of a string}, that $\mathrm{gcd}(|\f b_1|,|\f b_2|)$ is a period of $\f u$. Without loss, suppose $|\f b_1|>|\f b_2|$. Since $\f b'_1\sqsubset_l\f u$ and $\mathrm{gcd}(|\f b_1|,|\f b_2|)$ divides $|\f b'_1|=|\f b_1|$, we have that $\mathrm{gcd}(|\f b_1|,|\f b_2|)(<|\f b_1|)$ is a period of $\f b'_1$, which contradicts that $\f b'_1$ is a primitive cyclic string.

Therefore, without loss, let $|\f u|<|\f b_1|$. Since $\f u$ is a factor substring of $\INF{\f b_1}$, let $\f x_1$ and $\f y_1$ be maximal direct strings such that $\f y_1\f u\f x_1^{-1}\sqsubset_l\INF{\f b_1}$. It is not difficult to note that $|\f y_1\f u\f x_1^{-1}|\leq|\f b_1^2|$, which gives $\f y_1\f u\f x_1^{-1}\sqsubset\f b_1^3$. Therefore, we have $\f y_1\f u\f x_1^{-1}\sqsubset\f b_1^{2\cdot\wba{\f b_2}}$ since $\wba{\f b_2}\geq 2$, which also gives $|\wst{\f y_1\f u\f x_1^{-1}}|<2|\wba{\f b_1}||\wba{\f b_2}|$.

Since $\f u$ is an image substring of $\INF{\f b_2}$, let $\f x_2$ and $\f y_2$ be maximal direct strings such that $\f y_2^{-1}\f u\f x_2\sqsubset\INF{\f b_2}$. Note that $|\wst{\f y_2^{-1}\f u\f x_2}|=|\wst{\f y_1\f u\f x_1^{-1}}|<2|\wba{\f b_1}||\wba{\f b_2}|$. Therefore, we have $\f y_2^{-1}\f u\f x_2\sqsubset\f b_2^{2\cdot\wba{\f b_1}}$.

Now there are two cases.

\textbf{Case 1.} $|\f u|=0$.

Let $p:=(v,i)\in\s A$ be such that $\f u=1_{(v,i)}$. Then $\wst{\f y_1\f u\f x_1^{-1}}=n'pm'$ and $\wst{\f y_2^{-1}\f u\f x_2}=n''pm''$, where $n',m'<p$ and $n'',m''>p$. Therefore we have that there is a cyclic permutation $n'pm'\s z_1$ of $\wba{\f b_1}^{2\cdot\wba{\f b_2}}$ and a cyclic permutation  $n''pm''\s z_2$ of $\wba{\f b_2}^{2\cdot\wba{\f b_1}}$ such that $n'<n''$ and $m'<m''$. By Condition \ref{axiom1 of WPCC2} of Definition \ref{defn: weak perf clus2}, we have that the pair $(\wba{\f b_1},\wba{\f b_2})$ is not weakly perfectly clustering.\\

\textbf{Case 2.} $|\f u|>0$

By Remark \ref{rem:alt string repn}, we have $\f y_1\f u\f x_1^{-1}=\f y_11_m\f u1_n\f x_1^{-1}$ and $\f y_2^{-1}\f u\f x_2=\f y_2^{-1}1_m\f u1_n\f x_2$ for appropriate $m,n\in \s A$. Now we have three subcases.\\

\textbf{Subcase 1.} $\delta{(\f u)}=0$.

There will be four cases based on the signs of the first and last syllables of $\f u$. We will deal with only one of the cases; the rest can be handled similarly. Assume that the first syllable of $\f u$ is inverse and the last syllable is direct. We have $\wst{\f y_11_m\f u1_n\f x_1^{-1}}=n'\s zm'$ and $\wst{1_m\f u1_n}=n\s zm$ for some $\s z\in\s A^+$, where $n'<n$ and $m'<m$. Therefore we obtain a cyclic permutation $n'\s zm'\s z'$ of $\wba{\f b_1}^{2\cdot\wba{\f b_2}}$ and a cyclic permutation  $n\s zm\s z''$ of $\wba{\f b_2}^{2\cdot\wba{\f b_1}}$ such that $n'<n$ and $m'<m$. By Condition \ref{axiom1 of WPCC2} of Definition \ref{defn: weak perf clus2}, we have that $(\wba{\f b_1},\wba{\f b_2})$ is not weakly perfectly clustering.\\

\textbf{Subcase 2.} $\delta{(\f u)}=1$.

In this case, we have $n<m$. So $\wst{1_m\f u1_n\f x_1^{-1}}=n'm$ and $\wst{\f y_2^{-1}1_m\f u1_n}=nm''$, where $n'<n$ and $m<m''$. Therefore we obtain a cyclic permutation $n'm\s z'$ of $\wba{\f b_1}^{2\cdot\wba{\f b_2}}$ and a cyclic permutation  $nm''\s z''$ of $\wba{\f b_2}^{2\cdot\wba{\f b_1}}$ such that $n'<n$ and $m<m''$. Note that $n\in T_{(n',m)}\cap T_{(n,m'')}$. Thus by Condition \ref{axiom2 of WPCC2} of Definition \ref{defn: weak perf clus2}, we have that $(\wba{\f b_1},\wba{\f b_2})$ is not weakly perfectly clustering.\\

\textbf{Subcase 3.} $\delta{(\f u)}=-1$.

In this case, we have $n>m$. So $\wst{\f y_11_m\f u1_n}=nm'$ and $\wst{1_m\f u1_n\f x_2}=n''m$, where $n<n''$ and $m'<m$. Therefore we obtain a cyclic permutation $nm'\s z'$ of $\wba{\f b_1}^{2\cdot\wba{\f b_2}}$ and a cyclic permutation  $n''m\s z''$ of $\wba{\f b_2}^{2\cdot\wba{\f b_1}}$ such that $n<n''$ and $m''<m$. Note that $m\in T_{(n',m)}\cap T_{(n,m'')}$. Thus again by Condition \ref{axiom2 of WPCC2} of Definition \ref{defn: weak perf clus2}, we have that $(\wba{\f b_1},\wba{\f b_2})$ is not weakly perfectly clustering.\\

$(\Longrightarrow)$ Without loss, suppose the pair $(\s w_1:=\wba{\f b_1},\s w_2:=\wba{\f b_2})$ is not weakly perfectly clustering. There are two cases as per Definition \ref{defn: weak perf clus2}.

\textbf{Case 1.} There exist cyclic permutations $\s w'$ of $\s w_1^{2\cdot|\s w_2|}$ and $\s w''$ of $\s w_2^{2\cdot|\s w_1|}$ such that $\s w'=n'\s zm'\s z'$ and $\s w''=n''\s zm''\s z''$, where $n'<n'',\ m'<m''$ and $|\s z|\geq 1$.

Let $\f x_1:=\winvst{\s w'}$ and $\f x_2:=\winvst{\s w''}$. It is clear that $\f x_1\sqsubset\INF{\f b_1}$ and $\f x_2\sqsubset\INF{\f b_2}$.\\

\textbf{Subcase 1.} $|\s z|>1$.

Let $\s z:=p_1\overline{\s z}p_2$, where $|\overline{\s z}|\geq 0$. Since $\s w'$ and $\s w''$ are zigzags, we have $n'<p_1$ if and only if $n''<p_1$. Similarly, $m'<p_2$ if and only if $m''<p_2$. Figure \ref{fig:case1 Subcase1} shows the different possibilities that can occur.

\begin{figure}[h]
    \hspace{2mm}
    \begin{tikzpicture}[x=0.4pt,y=0.4pt,yscale=-1,xscale=1]
 
\draw   (20,70) -- (500,70) -- (500,270) -- (20,270) -- cycle ;
\draw    (47.71,190) -- (86.24,100) ;
\draw    (86.24,100) -- (105.5,140) ;
\draw    (163.3,150) -- (192.2,100) ;
\draw    (230.73,210) -- (192.2,100) ;
\draw [line width=1.5]  [dash pattern={on 5.63pt off 4.5pt}]  (105.5,120) -- (163.3,120) ;
\draw    (310,170) -- (335.83,100) ;
\draw    (335.83,100) -- (355.1,140) ;
\draw    (412.9,150) -- (441.8,100) ;
\draw    (470,180) -- (441.8,100) ;
\draw [line width=1.5]  [dash pattern={on 5.63pt off 4.5pt}]  (355.1,120) -- (412.9,120) ;

\draw (160,280.4) node [anchor=north west][inner sep=0.75pt]    {$n'< p_{1} ,\ m'< p_{2}$};
\draw (40.67,192.4) node [anchor=north west][inner sep=0.75pt]    {$n'$};
\draw (229.4,212.4) node [anchor=north west][inner sep=0.75pt]    {$m'$};
\draw (78.39,142.4) node [anchor=north west][inner sep=0.75pt]    {$\underbrace{\ \ \ \ \ \ \ \ \ \ \ \ }$};
\draw (130.33,166.4) node [anchor=north west][inner sep=0.75pt]    {$\mathsf{z}$};
\draw (301,172.4) node [anchor=north west][inner sep=0.75pt]    {$n''$};
\draw (455,182.4) node [anchor=north west][inner sep=0.75pt]    {$m''$};
\draw (327.98,142.4) node [anchor=north west][inner sep=0.75pt]    {$\underbrace{\ \ \ \ \ \ \ \ \ \ \ \ }$};
\draw (379.93,166.4) node [anchor=north west][inner sep=0.75pt]    {$\mathsf{z}$};
\end{tikzpicture}
    \hfill
    \begin{tikzpicture}[x=0.4pt,y=0.4pt,yscale=-1,xscale=1]

\draw   (20,70) -- (500,70) -- (500,270) -- (20,270) -- cycle ;
\draw    (42.71,227) -- (81.24,137) ;
\draw    (81.24,137) -- (100.5,177) ;
\draw    (190,190) -- (175.2,137) ;
\draw [line width=1.5]  [dash pattern={on 5.63pt off 4.5pt}]  (99.5,159) -- (170,160) ;
\draw    (310,190) -- (330.83,137) ;
\draw    (330.83,137) -- (350.1,177) ;
\draw    (190,190) -- (220,130) ;
\draw    (438,190) -- (418.2,141) ;
\draw    (438,190) -- (470,100) ;
\draw [line width=1.5]  [dash pattern={on 5.63pt off 4.5pt}]  (350,160) -- (420,160) ;

\draw (160,280.4) node [anchor=north west][inner sep=0.75pt]    {$n'< p_{1} ,\ m' >p_{2}$};
\draw (35.67,229.4) node [anchor=north west][inner sep=0.75pt]    {$n'$};
\draw (218,98.4) node [anchor=north west][inner sep=0.75pt]    {$m'$};
\draw (82.39,195.4) node [anchor=north west][inner sep=0.75pt]    {$\underbrace{\ \ \ \ \ \ \ \ \ \ \ }$};
\draw (131.33,220.4) node [anchor=north west][inner sep=0.75pt]    {$\mathsf{z}$};
\draw (291,192.4) node [anchor=north west][inner sep=0.75pt]    {$n''$};
\draw (450,72.4) node [anchor=north west][inner sep=0.75pt]    {$m''$};
\draw (330.98,195.4) node [anchor=north west][inner sep=0.75pt]    {$\underbrace{\ \ \ \ \ \ \ \ \ \ \ }$};
\draw (379.93,220.4) node [anchor=north west][inner sep=0.75pt]    {$\mathsf{z}$};
\end{tikzpicture}
    \\
    \vspace{4mm}
    \hspace{2mm}
    \begin{tikzpicture}[x=0.4pt,y=0.4pt,yscale=-1,xscale=1]

\draw   (20,70) -- (500,70) -- (500,270) -- (20,270) -- cycle ;
\draw    (80,180) -- (96.57,130) ;
\draw    (163.63,180) -- (192.53,130) ;
\draw    (231.06,240) -- (192.53,130) ;
\draw [line width=1.5]  [dash pattern={on 5.63pt off 4.5pt}]  (105.83,150) -- (163.63,150) ;
\draw    (80,180) -- (60,120) ;
\draw    (319.27,180) -- (335.83,130) ;
\draw    (402.9,180) -- (431.8,130) ;
\draw    (460,210) -- (431.8,130) ;
\draw [line width=1.5]  [dash pattern={on 5.63pt off 4.5pt}]  (345.1,150) -- (402.9,150) ;
\draw    (319.27,180) -- (290,90) ;

\draw (160,280.4) node [anchor=north west][inner sep=0.75pt]    {$n' >p_{1} ,\ m'< p_{2}$};
\draw (41,90.4) node [anchor=north west][inner sep=0.75pt]    {$n'$};
\draw (229.73,242.4) node [anchor=north west][inner sep=0.75pt]    {$m'$};
\draw (78.72,190.4) node [anchor=north west][inner sep=0.75pt]    {$\underbrace{\ \ \ \ \ \ \ \ \ \ \ \ }$};
\draw (130.66,215.4) node [anchor=north west][inner sep=0.75pt]    {$\mathsf{z}$};
\draw (293,72.4) node [anchor=north west][inner sep=0.75pt]    {$n''$};
\draw (451,222.4) node [anchor=north west][inner sep=0.75pt]    {$m''$};
\draw (317.98,190.4) node [anchor=north west][inner sep=0.75pt]    {$\underbrace{\ \ \ \ \ \ \ \ \ \ \ \ }$};
\draw (369.93,215.4) node [anchor=north west][inner sep=0.75pt]    {$\mathsf{z}$};
\end{tikzpicture}
    \hfill
    \begin{tikzpicture}[x=0.4pt,y=0.4pt,yscale=-1,xscale=1]

\draw   (20,70) -- (500,70) -- (500,270) -- (20,270) -- cycle ;
\draw    (190,140) -- (167.51,212.3) ;
\draw    (167.51,212.3) -- (148.44,172.2) ;
\draw    (90.69,161.91) -- (61.54,211.76) ;
\draw    (44,156) -- (61.54,211.76) ;
\draw [line width=1.5]  [dash pattern={on 5.63pt off 4.5pt}]  (148.34,192.2) -- (90.54,191.91) ;
\draw    (460,110) -- (425.51,212.3) ;
\draw    (425.51,212.3) -- (406.44,172.2) ;
\draw    (348.69,161.91) -- (319.54,211.76) ;
\draw    (290,110) -- (319.54,211.76) ;
\draw [line width=1.5]  [dash pattern={on 5.63pt off 4.5pt}]  (406.34,192.2) -- (348.54,191.91) ;

\draw (160,280.4) node [anchor=north west][inner sep=0.75pt]    {$n' >p_{1} ,\ m' >p_{2}$};
\draw (60.59,213.68) node [anchor=north west][inner sep=0.75pt]  [rotate=-359.77]  {$\underbrace{\ \ \ \ \ \ \ \ \ \ \ }$};
\draw (120.91,257.59) node [anchor=north west][inner sep=0.75pt]  [rotate=-180.29]  {$\mathsf{z}$};
\draw (33,120.4) node [anchor=north west][inner sep=0.75pt]    {$n'$};
\draw (175,110.4) node [anchor=north west][inner sep=0.75pt]    {$m'$};
\draw (318.59,213.68) node [anchor=north west][inner sep=0.75pt]  [rotate=-359.77]  {$\underbrace{\ \ \ \ \ \ \ \ \ \ \ }$};
\draw (378.91,257.59) node [anchor=north west][inner sep=0.75pt]  [rotate=-180.29]  {$\mathsf{z}$};
\draw (283,75.4) node [anchor=north west][inner sep=0.75pt]    {$n''$};
\draw (440,75.4) node [anchor=north west][inner sep=0.75pt]    {$m''$};
\end{tikzpicture}
    \caption{Different possibilities for Case 1.1}
    \label{fig:case1 Subcase1}
\end{figure}
Let $\overline{n}:=
\begin{cases}
    n'' &\text{ if }n'<p_1;\\
    n' &\text{ if }n'>p_1,
\end{cases}
\text{ and\hspace{10pt}}
\overline{m}:=
\begin{cases}
    m'' &\text{ if }m'<p_2;\\
    m' &\text{ if }m'>p_2.\\
\end{cases}
$

Note that $\overline{n}\s z\overline{m}$ is a valid zigzag. We claim that $\f u:=\winvst{\overline{n}\s z\overline{m}}$ is a string which appears as a factor substring of $\INF{\f b_1}$ and an image substring of $\INF{\f b_2}$.

Since $\f u$ is a substring of $\f x_1$ and $\f x_2$, consider the first occurrence (from the right, as per the convention for reading strings) of $\f u$ in $\winvba{\s w'}$, say $\alpha_{k'+l}\cdots\alpha_{k'+1}$, and in $\winvba{\s w''}$, say $\alpha_{k''+l}\cdots\alpha_{k''+1}$. Consider the marked strings $\dot{\f y}_1:=(\winvba{\s w'},k',k'+l)$ and $\dot{\f y}_2:=(\winvba{\s w''},k'',k''+l)$ (Definition \ref{defn: marked string}). These marked strings can be extended to marked infinite strings $\INF{\dot{\f y_1}}$ and $\INF{\dot{\f y_2}}$ in an obvious way. It is straightforward to note that $\vartheta_l(\INF{\dot{\f y_1}})=-\vartheta_l(\INF{\dot{\f y_2}})=\vartheta_r(\INF{\dot{\f y_1}})=-\vartheta_r(\INF{\dot{\f y_2}})=-1$, i.e. $\f u$ is a factor substring of $\INF{\winvba{\s w'}}=\INF{\f b_1}$ and an image substring of $\INF{\winvba{\s w''}}=\INF{\f b_2}$. Thus by Corollary \ref{cor:existence of morphisms between band modules}, there exists a non-trivial morphism $B(\f b_1,1,\lambda_1)\to B(b_2,1,\lambda_2)$.
\\

\textbf{Subcase 2.} $|\s z|=1.$

Let $\s z:=p$. So we have $n'<p$ if and only if $m'<p$, and $n''<p$ if and only if $m''<p$. Figure \ref{fig:case1 subcase2} shows three different possibilities in this case. The first two of them can be dealt with like Case 1.1 above.

\begin{figure}[h]
    \hspace{2mm}
    \begin{tikzpicture}[x=0.4pt,y=0.4pt,yscale=-1,xscale=1]

\draw   (20,70) -- (500,70) -- (500,270) -- (20,270) -- cycle ;
\draw    (110,200) -- (150.41,130) ;
\draw    (220,220.32) -- (150.41,130) ;
\draw    (370,169.26) -- (400.97,130) ;
\draw    (439.73,180.96) -- (400.97,130) ;

\draw (170,280.4) node [anchor=north west][inner sep=0.75pt]    {$n'< p ,\ n''< p$};
\draw (91,212.4) node [anchor=north west][inner sep=0.75pt]    {$n'$};
\draw (229,212.4) node [anchor=north west][inner sep=0.75pt]    {$m'$};
\draw (361,172.4) node [anchor=north west][inner sep=0.75pt]    {$n''$};
\draw (441,182.4) node [anchor=north west][inner sep=0.75pt]    {$m''$};
\draw (147,102.4) node [anchor=north west][inner sep=0.75pt]    {$p$};
\draw (397,102.4) node [anchor=north west][inner sep=0.75pt]    {$p$};

\end{tikzpicture}
    \hfill
    \begin{tikzpicture}[x=0.4pt,y=0.4pt,yscale=-1,xscale=1]

\draw   (20,70) -- (500,70) -- (500,270) -- (20,270) -- cycle ;
\draw    (450,120.27) -- (379.65,210) ;
\draw    (310,119.73) -- (379.65,210) ;
\draw    (178,173.88) -- (145.82,212.15) ;
\draw    (108.66,160) -- (145.82,212.15) ;

\draw (180,280.4) node [anchor=north west][inner sep=0.75pt]    {$n' >p ,\ n'' >p$};
\draw (101,132.4) node [anchor=north west][inner sep=0.75pt]    {$n'$};
\draw (189,162.4) node [anchor=north west][inner sep=0.75pt]    {$m'$};
\draw (299,92.4) node [anchor=north west][inner sep=0.75pt]    {$n''$};
\draw (451,92.4) node [anchor=north west][inner sep=0.75pt]    {$m''$};
\draw (141,216.4) node [anchor=north west][inner sep=0.75pt]    {$p$};
\draw (374,211.4) node [anchor=north west][inner sep=0.75pt]    {$p$};

\end{tikzpicture}
    \\
    \centering
    \vspace{4mm}
    \begin{tikzpicture}[x=0.4pt,y=0.4pt,yscale=-1,xscale=1]

\draw   (20,70) -- (500,70) -- (500,270) -- (20,270) -- cycle ;
\draw    (120,189.26) -- (150.97,150) ;
\draw    (189.73,200.96) -- (150.97,150) ;
\draw    (430,113.78) -- (397.87,152.1) ;
\draw    (360.65,100) -- (397.87,152.1) ;

\draw (170,280.4) node [anchor=north west][inner sep=0.75pt]    {$n'< p ,\ n'' >p$};
\draw (111,192.4) node [anchor=north west][inner sep=0.75pt]    {$n'$};
\draw (191,202.4) node [anchor=north west][inner sep=0.75pt]    {$m'$};
\draw (147,122.4) node [anchor=north west][inner sep=0.75pt]    {$p$};
\draw (369,82.4) node [anchor=north west][inner sep=0.75pt]    {$n''$};
\draw (435,85) node [anchor=north west][inner sep=0.75pt]    {$m''$};
\draw (399.87,155.5) node [anchor=north west][inner sep=0.75pt]    {$p$};

\end{tikzpicture}
    \caption{Different possibilities for Case 1.2}
    \label{fig:case1 subcase2}
\end{figure}
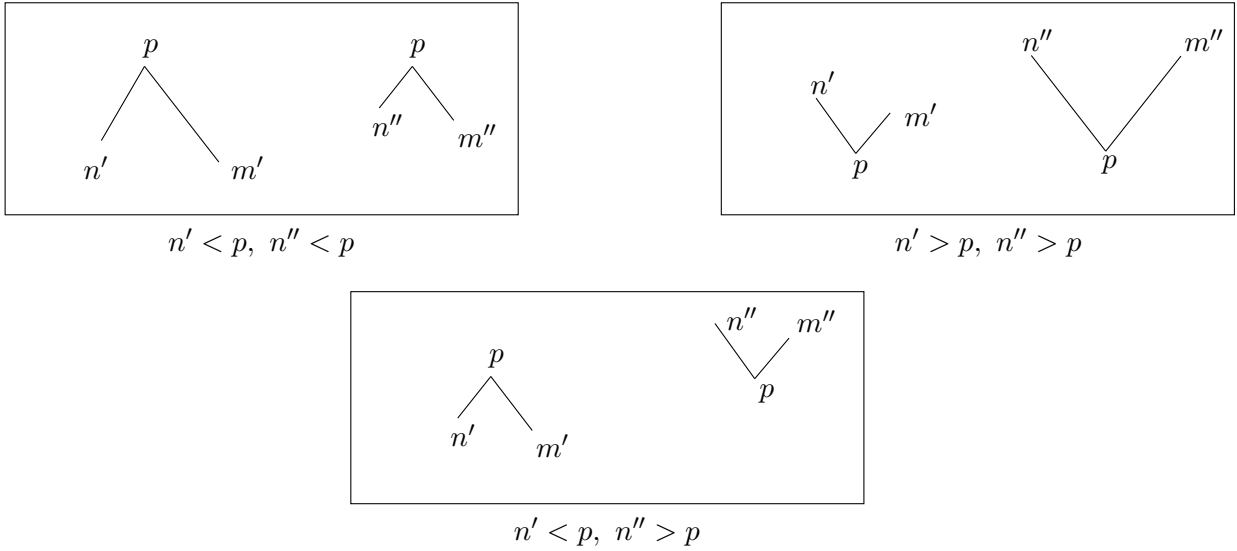

For the last possibility, assume that $n'<p$ and $n''>p$. Consequently, we have $m'<p$ and $m''>p$. Consider the zero-length string $\f u:=\winvst{p}$.

Since $\f u$ is a substring of $\winvba{\s w'}$ and $\winvba{\s w''}$, consider the first occurrence (from the right) of $\f u$ in $\winvba{\s w'}$, say that $\f u$ lies between $\alpha_{k'}$ and $\alpha_{k'+1}$ and in $\winvba{\s w''}$, say that $\f u$ lies between $\alpha_{k''}$ and $\alpha_{k''+1}$. Consider the marked strings $\dot{\f y}_1:=(\winvba{\s w'},k',k')$ and $\dot{\f y}_2:=(\winvba{\s w''},k'',k'')$. These marked strings can be extended to marked infinite strings $\INF{\dot{\f y_1}}$ and $\INF{\dot{\f y_2}}$ in an obvious way. It is straightforward to note that $\vartheta_l(\INF{\dot{\f y_1}})=-\vartheta_l(\INF{\dot{\f y_2}})=\vartheta_r(\INF{\dot{\f y_1}})=-\vartheta_r(\INF{\dot{\f y_2}})=-1$. The rest of the argument is similar to Case 1.1 above.\\

\textbf{Case 2.} There exist cyclic permutations $\s w'$ of $\s w_1^{2\cdot|\s w_2|}$ and $\s w''$ of $\s w_2^{2\cdot|\s w_1|}$ such that $\s w'=n'm'\s z'$ and $\s w''=n''m''\s z''$, where $n'<n'',\ m'<m''$, $n'<m'$ if and only if $n''<m''$, and $T_{(n',m')}\cap T_{(n'',m'')}\neq\emptyset$.

Without loss, let $n'<m'$. Consequently, $n''<m''$. Consider the least $p\in T_{(n',m')}\cap T_{(n'',m'')}$. We claim that $p=n''$. If not, then $n',n''<p$. Let $p_1$ and $p_2$ be the immediate predecessors of $p$ in $T_{(n',m')}$ and $T_{(n'',m'')}$ respectively. The choice of $p$ forces $p_1\neq p_2$. This in turn violates Condition \ref{at most one essential neighbour} of Definition \ref{defn: traced poset} since $T$ is closed subsequences, thus establishing the claim. Similarly, we can argue that the greatest element of $T_{(n',m')}\cap T_{(n'',m'')}$ is $m'$.

Since $T$ is closed under subsequences, $T_{(n'',m')}$ is a non-empty sequence in $T$. Therefore consider $\f u:=\winvst{n''m'}$. We claim that $\f u$ is a string that appears as a factor substring of $\INF{\f b_1}$ and an image substring of $\INF{\f b_2}$.

Consider the first occurrence of $\f u$ in $\winvba{\s w'}$ and in $\winvba{\s w''}$ and the appropriate marked string $\dot{\f y_1}$ and $\dot{\f y_2}$. The marked strings can be extended to $\INF{\dot{\f y_1}}$ and $\INF{\dot{\f y_2}}$ in the obvious way. We then proceed similarly as in Case 1.1.
\end{proof}

In view of Corollary \ref{cor: brick criteria}, as an immediate consequence of Lemma \ref{lem: nontrivial morphism between bands criterion in terms of WPCC}, we have the main result of the paper, which characterises band bricks in terms of weakly perfectly clustering pairs of crowns.

\main*

\begin{exmp}
Continuing from Example \ref{exmp: tracings}, consider the band $$\f b:=ba_1^{-1}a_2a_3^{-1}b^{-1}c_3c_2^{-1}c_1ba_1^{-1}a_2a_3^{-1}b^{-1}c_1^{-1}c_2c_3^{-1}.$$ Then we have $$\wba{\f b}=(v_4,1)(v_6,-1)(v_5,-1)(v_3,-1)(v_2,-1)(v_1,1)(v_5,1)(v_6,1)(v_4,-1)(v_3,-1)(v_2,-1)(v_1,1).$$ Note that $$\s w':=(v_5,-1)(v_3,-1)(v_2,-1)(v_1,1)(v_5,1)\s z'$$ and $$\s w'':=(v_4,-1)(v_3,-1)(v_2,-1)(v_1,1)(v_4,1)\s z''$$ are cyclic permutations of $\wba{\f b}^{2\cdot|\wba{\f b}|}$ for appropriate $\s z'$ and $\s z''$, where we have $(v_5,-1)<(v_4,-1)$ and $(v_5,1)<(v_4,1)$. Therefore $(\wba{\f b},\wba{\f b})$ is not perfectly clustering, which implies that $B(\f b,1,\lambda)$ is not a brick for any $\lambda\in\C K^\times$.
\end{exmp}

Here is the brief sketch of the argument of how Theorem \ref{thm: brick iff WPCC} can be obtained from Theorem \ref{thm: main} as promised at the end of \S~\ref{sec: band bricks in lamdaN}.

\begin{proof}[Proof (sketch) of Theorem \ref{thm: brick iff WPCC}]
The finite traced poset for $\Lambda_N$ contains two connected components, each of which is a linear order with $N$ elements. For a given band $\f b$ over $\Lambda_N$, $\wba{\f b}$ and $\wba{\f b^{-1}}$ will be formed by letters from distinct components. Therefore the pair $(\wba{\f b},\wba{\f b^{-1}})$ can never be weakly perfectly clustering. The rest follows from Theorem \ref{thm: main} because of Remark \ref{rmk: w is WPC iff ww is WPC}.
\end{proof}

\section{Applications}\label{sec: appl}
In this section, we mention two applications of our work. The first application characterises \emph{band semibricks}--a generalisation of semi-simple modules--in terms of weakly perfectly clustering pairs of crowns.

\begin{defn}\label{defn: semi bricks}
A \emph{semibrick} $S$ is a direct sum of finitely many bricks $B_1,\cdots,B_k$ such that there is no non-zero morphism from $B_i$ to $B_j$ for $i,j\in\{1,2,\cdots,k\}$ with $i\neq j$. A semibrick $S$ is a \emph{band semibrick} if each $B_i$ is a band brick.
\end{defn}

The following result is obtained as an immediate consequence of Lemma \ref{lem: nontrivial morphism between bands criterion in terms of WPCC} that provides a combinatorial characterisation of band semibricks.

\begin{cor}\label{cor: semi-brick criterion}
Let $\Lambda$ be a string algebra whose underlying quiver is acyclic. Let $\f b_1,\cdots,\f b_k$ be bands, $l_1,\cdots,l_k\in\mathbb N^+$ and $\lambda_1\cdots,\lambda_k\in\C K^\times$. Assume that $B(\f b_i,l_i,\lambda_i)\not\cong B(\f b_j,l_j,\lambda_j)$ for any $i\neq j$. Then the module $$S:=B(\f b_1,l_1,\lambda_1)\oplus\cdots\oplus B(\f b_k,l_k,\lambda_k)$$ is a band semi-brick if and only if $l_i=1$ for every $i\in\{1,\cdots,k\}$, and $(\wba{\f b_i},\wba{\f b_j})$ and $(\wba{\f b_i},\wba{\f b_j^{-1}})$ are weakly perfectly clustering pairs of crowns for every $i,j\in\{1,\cdots,k\}$.
\end{cor}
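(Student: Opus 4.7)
The plan is to derive this corollary by decomposing Definition \ref{defn: semi bricks} into two independent requirements and matching each against a criterion already established in the paper. For $S$ to be a band semibrick, two things must hold: first, each summand $B(\f b_i,l_i,\lambda_i)$ must itself be a brick; second, for every pair $i\neq j$, there must be no nonzero morphism $B(\f b_i,l_i,\lambda_i)\to B(\f b_j,l_j,\lambda_j)$. I will verify that the two combinatorial conditions in the statement correspond exactly to these two requirements.

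I would dispatch the first (brick) requirement by applying Theorem \ref{thm: main} to each index $i$: it forces $l_i=1$ and asserts that $B(\f b_i,1,\lambda_i)$ is a brick precisely when both $(\wba{\f b_i},\wba{\f b_i})$ and $(\wba{\f b_i},\wba{\f b_i^{-1}})$ are weakly perfectly clustering. These are the diagonal cases $i=j$ of the condition to be proved.

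For the off-diagonal requirement, I would invoke Lemma \ref{lem: nontrivial morphism between bands criterion in terms of WPCC}. Once the previous step has forced $l_i=l_j=1$, the lemma states that no non-trivial morphism exists from $B(\f b_i,1,\lambda_i)$ to $B(\f b_j,1,\lambda_j)$ if and only if both $(\wba{\f b_i},\wba{\f b_j})$ and $(\wba{\f b_i},\wba{\f b_j^{-1}})$ are weakly perfectly clustering. Together with the diagonal case, this packages into the single quantifier "for every $i,j\in\{1,\dots,k\}$" that appears in the corollary, giving both directions of the equivalence.

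The only subtle point I anticipate is aligning the word "non-trivial" (= nonzero and non-identity, following the convention fixed before Corollary \ref{cor:existence of morphisms between band modules}) in the lemma with the word "nonzero" in Definition \ref{defn: semi bricks}. Since the summands are assumed pairwise non-isomorphic, no identity morphism exists between distinct $B(\f b_i,1,\lambda_i)$ and $B(\f b_j,1,\lambda_j)$, so in this setting "non-trivial" and "nonzero" coincide, and the lemma's conclusion transfers directly to the semibrick hypothesis without further work. Aside from this terminological check, the proof is a routine assembly of Theorem \ref{thm: main} and Lemma \ref{lem: nontrivial morphism between bands criterion in terms of WPCC}.
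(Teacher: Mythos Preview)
Your proposal is correct and matches the paper's approach: the paper simply declares the corollary an immediate consequence of Lemma \ref{lem: nontrivial morphism between bands criterion in terms of WPCC}, and you have spelled out exactly how that deduction goes, including the diagonal case via Theorem \ref{thm: main} and the observation that for non-isomorphic summands ``non-trivial'' coincides with ``nonzero''. There is nothing to add.
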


For the second application, recall that a special biserial algebra is a bound quiver algebra $\C KQ/\langle\rho\rangle$, whose definition is identical to that of a string algebra except that $\rho$ can contain $\C K$-linear combinations of paths of length at least 2 that share the source and target vertices. Apart from the string modules or band modules, there are only finitely many finite-dimensional indecomposables for a special biserial algebra. The following result by Mousavand and Paquette provides a characterisation of brick-infinite special biserial algebras.

\begin{thm}
\cite[Theorem~1.3]{mousavand2022biserial} For a special biserial algebra $\Lambda:=\C K Q/I$, the following are equivalent:
\begin{enumerate}
    \item $\Lambda$ is brick-infinite;
    \item for some $2\leq d\leq 2|Q_0|$, there is an infinite family $\{X_\lambda\}_{\lambda\in\C K^\times}$ of bricks with $\mathrm{dim}_\C K(X_\lambda)=d$.
\end{enumerate}
\end{thm}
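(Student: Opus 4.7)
The plan is to prove the two directions of the Mousavand--Paquette theorem separately. The direction $(2) \Rightarrow (1)$ is immediate: an infinite family $\{X_\lambda\}_{\lambda \in \C K^\times}$ of pairwise non-isomorphic bricks is itself a witness that $\Lambda$ is brick-infinite. All of the substantive content lies in $(1) \Rightarrow (2)$.

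For $(1) \Rightarrow (2)$, I would exploit the classification of finite-dimensional indecomposable modules over a special biserial algebra: every such indecomposable is either a string module $M(\f x)$, a band module $B(\f b, l, \lambda)$, or belongs to an explicit finite collection of additional modules (the ``non-uniserial projective-injectives''). Since this last family contributes only finitely many isomorphism classes, brick-infiniteness forces either (A) some band module $B(\f b, 1, \lambda_0)$ to be a brick, or (B) infinitely many string modules to be bricks. (Band modules with $l \geq 2$ are never bricks, so the relevant band bricks come with $l = 1$.)

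In case (A), the combinatorial description of $\mathrm{Hom}$-spaces between band modules (cf.\ Corollaries \ref{cor:existence of morphisms between band modules} and \ref{cor: brick criteria}) shows that whether $B(\f b, 1, \lambda)$ is a brick depends only on the factor- and image-substring data of $\INF{\f b}$, not on the scalar $\lambda$. Hence $\{B(\f b, 1, \lambda) : \lambda \in \C K^\times\}$ is an infinite family of pairwise non-isomorphic bricks, all of dimension $|\f b|$. To secure the bound $d \leq 2|Q_0|$, I would show that the shortest band whose band module is a brick has length at most $2|Q_0|$. The idea is a pigeonhole argument on the covering-quiver-like structure $\cq{}$, which has exactly $2|Q_0|$ vertices: a band $\f b$ of length greater than $2|Q_0|$ must revisit some vertex of $\cq{}$, which either contradicts primitivity or allows one to carve out a strictly shorter cyclic sub-band; one then checks that the absence of a common factor/image substring of $\INF{\f b}$ and $\INF{(\f b^{-1})}$ descends to the sub-band, so brick-ness is retained upon shortening. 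Iterating terminates in a band brick of length at most $2|Q_0|$.

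In case (B), the strings $\f x$ for which $M(\f x)$ is a brick have unbounded length. A K\H onig-type argument on the finitely branching tree of finite strings over $Q$ produces an infinite one-sided (hence, after another extraction, a $\mathbb Z$-sided) string. Pigeonhole on $Q_0$ then gives a repeated vertex in this infinite string, from which one extracts a cyclic sub-string that can be shown to be a band. A further analysis promotes this to the production of a band brick, reducing case (B) to case (A). The main obstacle is the dimension bound $d \leq 2|Q_0|$ in case (A): it is easy to get \emph{some} infinite family of bricks once any band brick exists, but bounding the dimension requires the subtle combinatorial extraction of a short sub-band that is still a brick, which in turn hinges on the Crawley-Boevey/Krause description of morphisms between band modules used throughout this paper.
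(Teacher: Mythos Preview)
The paper does not supply its own proof of this theorem; it is quoted as \cite[Theorem~1.3]{mousavand2022biserial} and used as a black box to derive Corollary~\ref{cor: algorithm brick inf}. So there is no in-paper argument to compare your proposal against.

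Assessed on its own, your sketch has two genuine gaps. In case~(A), the pigeonhole/shortening step does not work as described: a band $\f b$ of length exceeding $2|Q_0|$ certainly revisits a vertex of $\cq{}$, but the segment between two such visits need not itself be a band (the composability conditions governed by $\varsigma,\varepsilon$ and the relations in $\rho$ can fail at the cut, and primitivity is not automatic), and even when it is, nothing in your argument guarantees that the shorter band module is still a brick. The brick condition for $B(\f b,1,\lambda)$ is the \emph{absence} of a common finite factor/image substring of $\INF{\f b}$ (and $\INF{(\f b^{-1})}$), and shortening $\f b$ changes $\INF{\f b}$ globally---new common substrings can and do appear. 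In case~(B), the promotion of an extracted band to a band \emph{brick} is precisely the hard content you are trying to prove; a K\H{o}nig-type extraction of a periodic $\mathbb Z$-string from arbitrarily long string bricks does not by itself produce a band whose module is a brick. The actual proof in Mousavand--Paquette proceeds via $\tau$-tilting theory and a reduction to minimal brick-infinite special biserial algebras (which they classify explicitly), not via direct string combinatorics of the kind you outline; the bound $2|Q_0|$ emerges from that structural classification rather than from a local shortening argument.
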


Note that for string algebras (or more generally, special biserial algebras), for any $d\geq 1$, any infinite family of indecomposable modules with total dimension $d$ must contain band modules. Moreover, the set of strings, and hence the set of bands, of a fixed length is finite, and can be effectively computed. Therefore, the following combination of the above theorem with Theorem \ref{thm: main} can be used as an algorithm to determine whether a string algebra (or a special biserial algebra) $\Lambda$ with acyclic underlying quiver is brick-infinite.

\begin{cor}\label{cor: algorithm brick inf}
Let $\Lambda$ be a string algebra (or a special biserial algebra) whose underlying quiver is acyclic. Consider the (finite) set $\s B$ of bands with length at most $2|Q_0|$. Then $\Lambda$ is brick-infinite if and only if there exists a band $\f b\in \s B$ such that $B(\f b,1,\lambda)$ is a brick for some (equivalently, any) $\lambda\in\C K^\times$.
\end{cor}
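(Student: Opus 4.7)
The plan is to combine the Mousavand--Paquette theorem with Theorem \ref{thm: main}, using only the basic enumerative facts about strings and bands in a string (or special biserial) algebra.

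$(\Leftarrow)$ First I would handle the easy direction. Suppose $\f b\in\s B$ and $B(\f b,1,\lambda_0)$ is a brick for some $\lambda_0\in\C K^\times$. By Theorem \ref{thm: main}, the brick property of $B(\f b,1,\lambda)$ depends only on $\f b$ (the conditions on the crown pairs $(\wba{\f b},\wba{\f b})$ and $(\wba{\f b},\wba{\f b^{-1}})$ are independent of $\lambda$), so $B(\f b,1,\lambda)$ is a brick for every $\lambda\in\C K^\times$. Using the classification $B(\f b,l,\lambda)\cong B(\f b',l',\lambda')$ if and only if $l=l'$, $\lambda=\lambda'$, and $\f b'$ is a cyclic permutation of $\f b$ or $\f b^{-1}$, the family $\{B(\f b,1,\lambda)\}_{\lambda\in\C K^\times}$ consists of pairwise non-isomorphic bricks, so $\Lambda$ is brick-infinite.

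$(\Rightarrow)$ Now suppose $\Lambda$ is brick-infinite. By the Mousavand--Paquette theorem, there is some $d$ with $2\leq d\leq 2|Q_0|$ and an infinite family $\{X_\lambda\}_{\lambda\in\C K^\times}$ of pairwise non-isomorphic bricks of $\C K$-dimension $d$. In the special biserial (in particular string algebra) setting, the indecomposables are exhausted by string modules, band modules, and at most finitely many additional ones. Since Condition~\ref{admissible} of Definition \ref{defn:string alg} bounds the lengths of strings without relations, there are only finitely many strings of any fixed length and hence only finitely many string modules of dimension $d$. Thus all but finitely many of the $X_\lambda$ are band modules, giving an infinite subfamily of band modules of dimension $d$ that are bricks.

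The main step is then a pigeonhole argument. Any band module $B(\f b,l,\lambda)$ of dimension $d$ satisfies $l\cdot|\f b|=d$, and for each divisor $m$ of $d$ there are only finitely many bands of length $m$. Since cyclic permutations of a given band (or its inverse) yield the same band module and there are only finitely many divisors of $d$, some pair $(\f b,l)$ with $l\cdot|\f b|=d$ must occur with infinitely many parameters $\lambda$ among our bricks. Applying Theorem \ref{thm: main} to any one such $B(\f b,l,\lambda)$ forces $l=1$, whence $|\f b|=d\leq 2|Q_0|$ and so $\f b\in\s B$. This $\f b$ witnesses the right-hand side.

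I do not expect any serious obstacle here, since both directions reduce quickly to counting arguments once Theorem \ref{thm: main} (which supplies both the $l=1$ conclusion and the $\lambda$-independence of brickness for band modules) and the Mousavand--Paquette theorem are in hand; the only subtlety is ensuring that the infinite family from Mousavand--Paquette can be trimmed to an infinite family of band modules of a common band, which is what the pigeonhole step achieves.
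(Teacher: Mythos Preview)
Your argument is correct and matches the paper's intended reasoning: the paper does not write out a proof but explicitly presents the corollary as the combination of the Mousavand--Paquette theorem with Theorem~\ref{thm: main}, together with the observation (stated just before the corollary) that any infinite family of fixed-dimension indecomposables must contain band modules since strings of a given length are finite in number. One small slip: the finiteness of strings of a fixed length follows simply from $Q_1$ being finite, not from Condition~\ref{admissible} of Definition~\ref{defn:string alg} (which bounds the length of direct paths, not the number of strings of a given length); this does not affect the validity of your argument.
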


\section{Comments and future directions}\label{sec: future dir}
Our treatise in this paper deals with string algebras whose underlying quiver is acyclic. Now we show that a gentle algebra whose underlying quiver is not necessarily acyclic can be ``trisected" to obtain a gentle algebra whose underlying quiver is acyclic keeping the nature (brick or not) of its band modules intact, and therefore our results are applicable.

Given a gentle algebra $\Lambda=\C K Q/\langle\rho\rangle$, where $Q=(Q_0,Q_1,s,t)$, construct its \emph{trisection}, which is again a string algebra $\Lambda^{tri}=\C KQ'/\langle\rho'\rangle$, where $Q'=(Q'_0,Q'_1,s',t')$, as follows:
$$Q'_0:=Q_0\cup\{v_\alpha^1\mid\alpha\in Q_1\}\cup\{v_\alpha^2\mid\alpha\in Q_1\},$$
$$Q'_1:=\{\alpha_1\mid\alpha\in Q_1\}\cup\{\alpha_2\mid\alpha\in Q_1\}\cup\{\alpha_3\mid\alpha\in Q_1\},$$
$$s'(\alpha_1):=s(\alpha),\ t'(\alpha_1):=v_\alpha^1,\ s'(\alpha_2):=v_\alpha^2,\ t'(\alpha_2):=v_\alpha^1,\ s'(\alpha_3):=v_\alpha^2,\ t'(\alpha_3):=t(\alpha),$$
$$\rho':=\{\alpha_1\beta_3:\alpha\beta\in\rho\}.$$

It is straightforward to note the following.
\begin{rmk}
Given a string $\gamma_k\cdots\gamma_1$ for $\Lambda$, it is a (cyclic permutation of a) band for $\Lambda$ if and only if $\gamma_{k3}\gamma_{k2}^{-1}\gamma_{k1}\cdots\gamma_{13}\gamma_{12}^{-1}\gamma_{11}$ is so too for $\Lambda^{tri}$.
\end{rmk}

The following proposition says that the nature of a band module remains intact after the modification.

\begin{prop}\label{prop:gentle to non-cycle keeps nature intact}
Let $\Lambda$ be a gentle algebra. Suppose $\f b:=\alpha_k\cdots\alpha_1$ is band for $\Lambda$ and the corresponding band for $\Lambda^{tri}$ is $\f b':=\alpha_{k3}\alpha_{k2}^{-1}\alpha_{k1}\cdots\alpha_{13}\alpha_{12}^{-1}\alpha_{11}$. Then for any $\lambda,\lambda'\in\C K^\times$, the band module $B(\f b,1,\lambda)$ is a brick if and only if the band module $B(\f b',1,\lambda')$ is also a brick.
\end{prop}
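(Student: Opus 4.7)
The plan is to invoke Corollary \ref{cor: brick criteria} on both sides: $B(\f b,1,\lambda)$ is a brick precisely when no finite string of $\Lambda$ is simultaneously a factor substring of $\INF{\f b}$ and an image substring of $\INF{\f b}$ or $\INF{\f b^{-1}}$, and analogously for $\f b'$ over $\Lambda^{tri}$. First I would extend the trisection from bands to arbitrary strings by setting $\tau(\alpha):=\alpha_3\alpha_2^{-1}\alpha_1$ for $\alpha\in Q_1$ and $\tau(\alpha^{-1}):=\alpha_1^{-1}\alpha_2\alpha_3^{-1}$, and concatenating syllable-by-syllable; this yields a well-defined map $\tau\colon\s{St}(\Lambda)\to\s{St}(\Lambda^{tri})$ that lifts to the infinite setting with $\tau(\INF{\f b})=\INF{\f b'}$ and $\tau(\INF{\f b^{-1}})=\INF{(\f b')^{-1}}$.

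For the forward direction, observe that the first and last syllables of $\tau(\alpha)$ are $\alpha_1$ and $\alpha_3$ (both direct), while those of $\tau(\alpha^{-1})$ are $\alpha_3^{-1}$ and $\alpha_1^{-1}$ (both inverse). Hence $\tau$ preserves the nature (direct/inverse) of the boundary syllables of a string, and consequently of the neighboring syllables on either side when viewed as a substring of $\INF{\f b'}$: a finite string $\f u$ in $\Lambda$ is a factor (resp.\ image) substring of $\INF{\f b}$ if and only if $\tau(\f u)$ is a factor (resp.\ image) substring of $\INF{\f b'}$, and similarly with $\INF{\f b^{-1}}$ replaced by $\INF{(\f b')^{-1}}$. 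A factor-image witness $\f u$ for the failure of the brick criterion for $\f b$ thus gives the witness $\tau(\f u)$ for $\f b'$.

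For the reverse direction, suppose $\f v'$ is a finite string in $\Lambda^{tri}$ that is both a factor substring of $\INF{\f b'}$ and an image substring of $\INF{\f b'}$ or $\INF{(\f b')^{-1}}$. The key observation is that the auxiliary vertices of $\Lambda^{tri}$ are constrained by construction: $v_\alpha^1$ has no outgoing arrows (only $\alpha_1$ and $\alpha_2$ terminate there), while $v_\alpha^2$ has no incoming arrows. Consequently, if $s(\f v')=v_\alpha^1$ then the right-outside syllable, which must have target $v_\alpha^1$, cannot be the inverse of any arrow, so the factor condition at the right end fails; if instead $s(\f v')=v_\alpha^2$ then no direct syllable can be the right-outside, so the image condition at the right end fails. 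The analogous analysis applies to $t(\f v')$, and hence both $s(\f v')$ and $t(\f v')$ must lie in $Q_0\subset Q'_0$.

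Since $\INF{\f b'}$ is a concatenation of complete trisections with boundaries occurring exactly at the vertices of $Q_0$, any substring with both endpoints in $Q_0$ is a concatenation of complete trisections and therefore equals $\tau(\f u)$ for a unique $\f u\in\s{St}(\Lambda)$. The factor/image properties of $\f v'=\tau(\f u)$ then transfer back to $\f u$ by the correspondence established in the forward direction, furnishing a factor-image witness in $\Lambda$ and completing the equivalence. The main obstacle I anticipate is the clean bookkeeping of the cases in the third paragraph---matching the four possibilities ($s(\f v')$ or $t(\f v')$ at $v_\alpha^1$ or $v_\alpha^2$) with the two flavors (factor versus image) and with the choice of $\INF{(\f b')^{\pm 1}}$---but each case is resolved by the rigidity of the trisection pattern, in which the direct-inverse-direct (or inverse-direct-inverse) alternation fixes the nature of every neighbor.
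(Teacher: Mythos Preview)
Your proposal is correct and follows essentially the same approach as the paper: both directions invoke Corollary~\ref{cor: brick criteria} and transport a factor--image witness through the trisection map. Your reverse-direction argument is somewhat more explicit than the paper's---where the paper simply asserts that ``the construction of $\Lambda^{tri}$ forces that the first and last syllables of $\f u$ are $\gamma_1$ and $\gamma'_3$'', you unpack this by analysing the in/out-degree of the auxiliary vertices $v_\alpha^1$ and $v_\alpha^2$ to pin the endpoints of $\f v'$ into $Q_0$---but the underlying idea is identical.
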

\begin{proof}
$(\Longleftarrow)$ Suppose the band module $B(\f b,1,\lambda)$ is not a brick. By Corollary \ref{cor: brick criteria}, there exists a string $\beta_l\cdots\beta_1$ which occurs as a factor substring of $\INF{\f b}$ and image substring of $\INF{\f b}$ or $\INF{(\f b^{-1})}$. It is immediate that the substring $\beta_{l3}\beta_{l2}^{-1}\beta_{l1}\cdots\beta_{13}\beta_{12}^{-1}\beta_{11}$ is a factor substring of of $\INF{\f b'}$ as well as an image substring of $\INF{\f b'}$ or $\INF{(\f {b'}^{-1})}$, which shows that the band module $B(\f b',1,\lambda')$ is not a brick.

$(\Longrightarrow)$ Suppose the band module $B(\f b',1,\lambda')$ is not a brick. Then there exists a string $\f u$ which occurs as a factor substring of $\INF{\f b'}$ and image substring of $\INF{\f b'}$ or $\INF{({\f b'}^{-1})}$. Note that the construction of $\Lambda^{tri}$ forces that the first and last syllables of $\f u$ are $\gamma_1$ and $\gamma'_3$ for some syllables $\gamma$ and $\gamma'$ of $\Lambda$. Consequently, $\f u=\beta_{l3}\beta_{l2}^{-1}\beta_{l1}\cdots\beta_{13}\beta_{12}^{-1}\beta_{11}$ for some syllables $\beta_1,\cdots,\beta_l$ of $\Lambda$, which again implies that $\beta_l\cdots\beta_1$ is a factor substring of $\INF{\f b}$ and an image substring of $\INF{\f b}$ or $\INF{(\f b^{-1})}$.
\end{proof}

If $\Lambda$ is not gentle, then the above modification does not keep the collection of bands (up to cyclic permutation) intact as the next example demonstrates.
\begin{exmp}
If we trisect $GP_{2,3}$ (Figure \ref{fig: GP23}), then we get the string algebra $\Gamma'$ from Figure \ref{fig:motivation of cov quiv}. Note that $b^{-1}$ is not (a cyclic permutation of) a band for $GP_{2,3}$ but $b_3^{-1}b_2b_1^{-1}$ is  (a cyclic permutation of) a band for the $\Gamma'$.
\end{exmp}

\begin{figure}[h]
    \begin{tikzcd}
v \arrow["b"', loop, distance=2em, in=35, out=325] \arrow["a"', loop, distance=2em, in=215, out=145]
\end{tikzcd}
    \caption{GP$_{2,3}$ with $\rho=\{a^2,b^3,ab,ba\}$}
    \label{fig: GP23}
\end{figure}
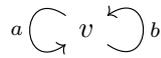

At this end, notice that due to the possibility of recovering a string algebra with acyclic underlying quiver from the associated traced poset (see \S~\ref{sec:construct string alg}), it is not possible to generalise the methods of this paper to include all string algebras. Therefore, it is natural to ask the following.

\begin{ques}
Is it possible to develop an algorithmic test (preferably using word combinatorics) for determining whether a given band module is a brick for any string algebra?
\end{ques}

Moving on to the other side, recall from Definition \ref{defn: weak perf clus} that weakly perfectly clustering words on a linearly ordered alphabet $\s A$ are a generalisation of perfectly clustering words on $\s A$. The latter class was shown in \cite{lapointebook} to be the class of positive primitive elements of the free group $F_{\s A}$ generated by $\s A$, where primitivity refers to elements of some basis. Such words were also generated by endomorphisms of $F_{\s A}$. We would like to ask if equivalent versions of such results are possible for weakly perfectly clustering words on $\s A$.

\begin{ques}
What is the subset of $F_{\s A}$ that corresponds to the set of weakly perfectly clustering words on $\s A$? Is it possible to generate those words using some endomorphisms of the free group $F_{\s A}$?
\end{ques}





\bibliographystyle{alpha}
\bibliography{main}
\end{document}